\documentclass[11pt]{article}
\usepackage[a4paper,margin=26mm]{geometry}
\usepackage[T1]{fontenc}
\usepackage{lmodern,amsmath,amssymb,amsthm,mathtools,booktabs,microtype}
\usepackage[colorlinks=true,linkcolor=blue,citecolor=blue,urlcolor=blue]{hyperref}
\newtheorem{theorem}{Theorem}[section]
\newtheorem{proposition}[theorem]{Proposition}
\newtheorem{lemma}[theorem]{Lemma}
\newtheorem{corollary}[theorem]{Corollary}
\theoremstyle{definition}
\theoremstyle{remark}\newtheorem{remark}[theorem]{Remark}
\newcommand{\E}{\mathbb E}\newcommand{\Pp}{\mathbb P}
\newcommand{\T}{\mathbb T}\newcommand{\R}{\mathbb R}\newcommand{\C}{\mathbb C}
\newcommand{\Z}{\mathbb Z}
\newcommand{\dd}{\,\mathrm d}\newcommand{\ind}{\mathbf1}
\newcommand{\Var}{\operatorname{Var}}\newcommand{\Poi}{\operatorname{Poi}}
\newcommand{\Leb}{\operatorname{Leb}}\newcommand{\TV}{\operatorname{TV}}
\newcommand{\Ree}{\operatorname{Re}}
\newcommand{\Op}{O_{\Pp}}\newcommand{\eps}{\varepsilon}
\numberwithin{equation}{section}\allowdisplaybreaks
\title{Spectral extremes under exact cycle conditioning}
\author{Zhipeng Lu\thanks{Shenzhen MSU--BIT University and Guangdong
Laboratory of Machine Perception and Intelligent Computing,
Shenzhen 518172, Guangdong, China.
Email: \href{mailto:zhipeng.lu@hotmail.com}{zhipeng.lu@hotmail.com}.}}
\date{September 10, 2026}
\hypersetup{pdftitle={Spectral extremes under exact cycle conditioning}}
\begin{document}\maketitle
\begin{abstract}
Let $P_n$ be the matrix of a random permutation of $n$ symbols and let
$M_n=\log\max_{|z|=1}|\det(I-zP_n)|$. Cook and Zeitouni proved that
$M_n/\log n$ converges in probability to a constant $x_0$ for a uniform
permutation. We show that the $\sqrt{\log n}$ fluctuations of $M_n$ are
carried entirely by the number of cycles $K_n$. Write
$\lambda(s)=\log\{\Gamma(1+s)/\Gamma(1+s/2)^2\}$, let $s_\kappa$ be the
minimizer of $(1+\kappa\lambda(s))/s$ on $(0,\infty)$, and put
$v(\kappa)=\kappa\lambda'(s_\kappa)$ and
$a_\theta=\lambda(s_\theta)/s_\theta$. Under the Ewens measure with any
fixed parameter $\theta>0$,
\[
 M_n=v(\theta)\log n+a_\theta\bigl(K_n-\theta\log n\bigr)
       +\Op(\log\log n),
\]
so that the standardized pair $(K_n,M_n)$ converges jointly to $(G,G)$
with $G$ standard normal: the maximum and the cycle count are
asymptotically perfectly aligned. This is deduced from a statement about
the exact conditional law, which does not depend on $\theta$: for every
compact $[\kappa_-,\kappa_+]\subset(0,\infty)$ there is a finite $C$ with
\[
 \sup_{\kappa_-\log n\le k\le\kappa_+\log n}
 \Pp\bigl(|M_n-v(k/\log n)\log n|>C\log\log n\ \big|\ K_n=k\bigr)
 \longrightarrow0,
\]
uniformly over exact and possibly atypical cycle counts. The proof keeps
the size and the cycle count simultaneously in a two-variable
coefficient extraction. Cycles longer than $n/(\log n)^4$ are reserved as
an analytic factor whose coefficients are flat under every size shift
produced by the shorter cycles; positivity then converts a scalar
coefficient asymptotic into a relative comparison of the entire
path-constrained measure, with an error that does not degrade with the
number of constraints or with the rarity of the event. The constrained
lower bound comes from pointwise saddle estimates for killed
convolutions along a dyadic chain of endpoint boxes.
\end{abstract}

\section{Introduction}\label{sec:intro}

\subsection{The maximum of a random permutation characteristic
polynomial}

Let $\sigma_n$ be a random permutation of $\{1,\dots,n\}$, let $P_n$ be
its permutation matrix, and let $C_j=C_j(\sigma_n)$ be its number of
$j$-cycles. Write $e(t)=\exp(2\pi it)$ and $\T=\R/\Z$ with normalized
Lebesgue measure. The characteristic polynomial factors along cycles,
\[
 F_n(z)=\det(I-zP_n)=\prod_{j\ge1}(1-z^j)^{C_j},
\]
so that on the unit circle
\begin{equation}\label{eq:field}
 \log|F_n(e(t))|=\sum_{j\ge1}C_j\log|1-e(jt)| .
\end{equation}
We study
\[
 M_n=\log\|F_n\|_\infty=\max_{t\in\T}\log|F_n(e(t))|,
\]
which is also the logarithm of the maximum modulus of $\det(zI-P_n)$.

The field \eqref{eq:field} is of log-correlated type. For a uniform
permutation the counts $C_j$ are asymptotically independent Poisson
variables of mean $1/j$, so \eqref{eq:field} is a sum over $\asymp\log n$
effective dyadic scales of frequencies, its variance is of order
$\log n$, and the covariance of two points decays like the logarithm of
their distance. Its increments are, however, far from Gaussian: the
summand $\log|1-e(jt)|$ is bounded above by $\log 2$ and has an
exponential left tail. The extremal behaviour of $M_n$ is therefore
governed by a genuine Legendre transform and not by a Gaussian rate. The
relevant cumulant generating function is that of $\log|1-e(U)|$ for a
uniform $U\in\T$,
\begin{equation}\label{eq:A}
 A(s)=\int_\T|1-e(t)|^s\dd t=\frac{\Gamma(1+s)}{\Gamma(1+s/2)^2},
 \qquad\lambda(s)=\log A(s),\qquad s>-1.
\end{equation}
For complex $z$ with $\Ree z>-1$ we write $A(z)$ for the same integral,
with $|1-e(t)|^z=\exp(z\log|1-e(t)|)$ off the roots and value zero at the
roots; it is analytic and equals the same gamma ratio.

Cook and Zeitouni \cite[Theorem~1.2]{CZ} proved that for a uniform
permutation $M_n/\log n$ converges in probability to a constant $x_0$,
characterized by $\lambda^*(x_0)=1$ for the Legendre transform
$\lambda^*$ of $\lambda$. Their proof combines arithmetic separation of
frequencies, a division into small and large arcs, two-point estimates
and a truncated second moment. Those mechanisms reappear below, at the
level of the constrained kernels; what is new here is the way the exact
combinatorial constraints are carried through them.

\subsection{Why the cycle count should be conditioned on}
\label{subsec:why}

The randomness in \eqref{eq:field} has two layers of very different
character. The arithmetic layer is the choice of the frequencies $j$
that occur; it produces the log-correlated structure and all the
difficulty in locating the maximum. The other layer is the total number
of cycles
\[
 K_n=\sum_{j\ge1}C_j,
\]
a single global statistic that merely counts how many summands
\eqref{eq:field} has. Under the Ewens measure with parameter $\theta>0$
one has $K_n=\theta\log n+\Op(\sqrt{\log n})$, and for the maximum this
fluctuation is not negligible. The extremal speed depends on the number
of summands, so a displacement of order $\sqrt{\log n}$ in $K_n$ moves
$M_n$ by the same order; and $\sqrt{\log n}$ dominates every correction
of size $\log\log n$. The leading fluctuation of $M_n$ is therefore
inherited from the count layer, not from the arithmetic one.

This suggests separating the two layers, and in the Ewens family the
separation is exact. The cycle counts satisfy
\[
 \Pp_{\theta,n}(C_j=c_j)=h_n(\theta)^{-1}
       \prod_{j\ge1}\frac{(\theta/j)^{c_j}}{c_j!},
 \qquad\sum_jjc_j=n,
\]
where
\begin{equation}\label{eq:basic-coeff}
 h_n(u)=[z^n](1-z)^{-u},\qquad a_{n,k}=[u^k]h_n(u),
 \qquad h_0(u)=1,
\end{equation}
and coefficients with an impossible index are zero. Conditioning on
$K_n=k$ cancels the factor $\theta^k$ and leaves
\begin{equation}\label{eq:conditional-law}
 \Pp_{n,k}(C_j=c_j)=a_{n,k}^{-1}
       \prod_j\frac{j^{-c_j}}{c_j!},
 \qquad\sum_jjc_j=n,\quad\sum_jc_j=k.
\end{equation}
This law does not depend on $\theta$. It is a microcanonical law, with
the size and the number of cycles both fixed exactly, and it is the
canonical object that remains once the count layer has been removed.

Working with \eqref{eq:conditional-law} is strictly more demanding than
working with the Ewens law itself. The two are related by the mixture
identity
\begin{equation}\label{eq:mixture}
 \Pp_{\theta,n}(A_n)=\sum_k\Pp_{\theta,n}(K_n=k)\,\Pp_{n,k}(A_n),
\end{equation}
and convergence of its left-hand side to zero gives no uniform bound on
the conditional probabilities on the right: at the mode
$\Pp_{\theta,n}(K_n=k)$ is only of order $(\log n)^{-1/2}$, and it is
exponentially smaller for counts away from $\theta\log n$. A statement
about the exact conditional law has to be proved directly, and every
estimate that carries the size constraint has to keep a \emph{relative}
error throughout. Producing such an estimate for a rare path event,
\emph{after} all of the constraints defining that event have been
imposed, is the technical heart of this paper.

Exact cycle conditioning also arises on its own. In \cite{Lu} the author
showed that for two uniformly random perfect matchings on $2n$ labels
the profile of alternating components is Ewens$(1/2)$; there the number
of components is the natural quantity to fix, and
Section~\ref{sec:consequences} records the resulting spectral statement.

\subsection{Results}\label{subsec:results}

For each $\kappa>0$ the function $s\mapsto(1+\kappa\lambda(s))/s$ has a
unique minimizer $s_\kappa$ on $(0,\infty)$, characterized by
\begin{equation}\label{eq:critical}
 \kappa\{s_\kappa\lambda'(s_\kappa)-\lambda(s_\kappa)\}=1.
\end{equation}
Indeed $\lambda''(s)>0$ is the variance of $\log|1-e(U)|$ under the
exponential tilt of order $s$, so the left-hand side vanishes at $s=0$,
has strictly positive derivative $\kappa s\lambda''(s)$ for $s>0$, and
tends to infinity by Stirling's formula. Put
$v(\kappa)=\kappa\lambda'(s_\kappa)$. Then, with $\kappa=k/\log n$,
\begin{equation}\label{eq:center}
 m_{n,k}:=\inf_{s>0}\frac{\log n+k\lambda(s)}{s}
        =\frac{\log n+k\lambda(s_\kappa)}{s_\kappa}
        =v(\kappa)\log n.
\end{equation}

\begin{theorem}[Exact-cycle localization]\label{thm:main}
Let $0<\kappa_-<\kappa_+<\infty$. There is a finite constant $C$,
depending only on this interval, such that
\begin{equation}\label{eq:main}
 \sup_{\substack{k\in\Z\\\kappa_-\log n\le k\le\kappa_+\log n}}
 \Pp_{n,k}\bigl(|M_n-m_{n,k}|>C\log\log n\bigr)\longrightarrow0.
\end{equation}
\end{theorem}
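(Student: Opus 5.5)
We prove the two one-sided bounds separately, uniformly in $k\in[\kappa_-\log n,\kappa_+\log n]$, always working under the microcanonical law \eqref{eq:conditional-law}. Write $\kappa=k/\log n$ and $s=s_\kappa$.

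\textbf{Upper bound.} We discretize $\T$ into $N\asymp n(\log n)^{B}$ points. Since $F_n$ is a polynomial of degree $n$, Bernstein's inequality controls the loss from this discretization by $O(1)$. We then use a union bound with an exponential moment of order $s$. By \eqref{eq:conditional-law},
\[
 \E_{n,k}|F_n(e(t))|^s=a_{n,k}^{-1}[z^nu^k]\prod_j\exp\Bigl(\frac{u z^j}{j}|1-e(jt)|^s\Bigr).
\]
For generic $t$ (irrational-like, with the small-denominator arcs treated separately in the manner of Cook--Zeitouni) the weights $|1-e(jt)|^s$ average to $A(s)$ over ranges of $j$. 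The two-variable saddle point (in $z$ near $1$, and in $u$ near $k/\log n$ times a correction) then gives
\[
 \E_{n,k}|F_n|^s\le A(s)^k(\log n)^{O(1)},
\]
with a relative, not absolute, error. This requires comparing $[z^nu^k]$ of the tilted product with $a_{n,k}$. We plan to do this via the factorization: write the product as a part over long cycles $j>n/(\log n)^4$, which are $O(1)$ in number and analytic with flat coefficients, times the short-cycle part, and extract $u^k$ by a Cauchy integral on $|u|=\kappa$. Markov's inequality at level $m_{n,k}+C\log\log n$ then gives
\[
 N\,A(s)^k e^{-s(m_{n,k}+C\log\log n)}=(\log n)^{O(1)-sC}\to0
\]
by \eqref{eq:center}, once $C$ is large. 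The rational arcs $t$ near $p/q$ with small $q$ are handled by the bound $\log|F_n|\le\sum_jC_j\log|1-e(jt)|$ together with the fact that only $C_j$ with $q\mid j$ feel the degeneracy. Their contribution is controlled since $\sum_{q\mid j}C_j$ is concentrated at $\kappa\log n/q$.

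\textbf{Lower bound.} This is the main obstacle. We follow a truncated second moment along a branching, dyadic-scale structure. For a point $t$ we track the partial sums $S_\ell(t)=\sum_{j\le e^\ell}C_j\log|1-e(jt)|$ over scales $\ell\le\log n$. Under the tilted measure (tilt $s$ on the field, with the count constraint) these should follow a line of slope $v(\kappa)/1$ per unit $\log$-scale. We count points $t$ in a $1/n$-net whose path stays within a barrier of width $O(\log\log n)$ around this line and ends in an endpoint box. The first moment of the count $Z$ of such points must be computed \emph{exactly under $\Pp_{n,k}$}. Here we use the reserved long-cycle factor: conditioning on the short cycles shifts the remaining size by at most $n/(\log n)^{4}\cdot O(\log n)$, and the long-cycle coefficients are flat under such shifts. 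Positivity then converts the scalar asymptotic for $a_{n,k}$ into a relative comparison of the full path-constrained measure. This yields $\E_{n,k}Z$ as the unconstrained killed-random-walk probability times $n$, up to a factor $(\log n)^{O(1)}$. The killed convolution estimates (pointwise local limit theorems for walks with increments distributed as tilted $\log|1-e(U)|$ sums, kept positive along a dyadic chain of boxes) give $\E_{n,k}Z\ge(\log n)^{-c}e^{\,\cdot}$, which is large at the level $m_{n,k}-C\log\log n$. The second moment $\E_{n,k}Z^2$ splits according to the branching scale $\ell$ where $|t-t'|\approx e^{-\ell}$. The arithmetic separation of frequencies makes the two fields nearly independent beyond scale $\ell$, while the barrier controls the shared part. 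This gives $\E Z^2\le(\log n)^{O(1)}(\E Z)^2$.

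Paley--Zygmund alone then gives only $\Pp(Z>0)\ge(\log n)^{-O(1)}$. We therefore upgrade it by using the first $\approx\log\log n$ scales as independent restarts: a sprinkling argument over $(\log n)^{O(1)}$ disjoint arcs, each carrying a nearly independent copy, with a loss of only $O(\log\log n)$ in the level. The hardest part is ensuring that every one of these estimates keeps a relative error under the exact constraint $K_n=k$, uniformly even when $k$ is atypical. For this we first try the two-variable extraction with the long-cycle factor isolated, as sketched above.
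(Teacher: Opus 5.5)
\textbf{Overlap, and where the proposal fails.} Your reservoir/positivity step (reserving cycles longer than $n/(\log n)^4$, using flatness to carry the whole path-constrained measure) matches the paper. So do the pointwise killed kernels along a dyadic box chain. The gap is in the upper bound: as you set it up it does not close, and your treatment of resonant arcs points the wrong way.

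\textbf{The upper bound on resonant arcs.} Near $p/q$ the danger is not the factors with $q\mid j$; they are small and only lower the field. The danger is the factors with $q\nmid j$, which become deterministic and can exceed their average. Take $\|t-\tfrac12\|_\T\asymp e^{-a}$. For odd $j\lesssim e^{a}$ one has $|1-e(jt)|^s\approx2^s$, so the harmonic mass of the weights is about $LA(s)+a(2^{s-1}-A(s))$. Fixing only $K_n=k$, the $[u^k]$ extraction then gives
\[
 \E_{n,k}|F_n(e(t))|^s\approx A(s)^k\Bigl(1+\frac aL\bigl(e^{b_s}-1\bigr)\Bigr)^k,
 \qquad e^{b_s}=\frac{2^{s-1}}{A(s)},
\]
with $b_s$ as in \eqref{eq:bs}, which is positive once $s>2$. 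This set of $t$ has measure $\asymp e^{-a}$ and contains $\gtrsim ne^{-a}$ points of your net. With $a=xL$, your union bound is therefore at least $L^{-sC}\exp\{L[\kappa\log(1+x(e^{b_s}-1))-x]\}$. That is polynomially large in $n$ whenever $\kappa(e^{b_s}-1)>1$.
\begin{itemize}
\item This already happens for the uniform permutation: $s_1\approx11.8$, $b_{s_1}\approx0.79$, so $e^{b_{s_1}}-1\approx1.2$.
\item Moreover $\kappa(e^{b_{s_\kappa}}-1)\to\infty$ as $\kappa\downarrow0$.
\end{itemize}
The annealed moment is carried by improbable allocations that pile cycles into the resonant range, and fixing $K_n$ alone does not exclude them.

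The paper's fix has three parts. It fixes a regular block-count environment with $q_i\le(\kappa+\eta)\omega$ (Lemma~\ref{lem:regular}). It uses the envelope $B_{a,c}(s;t)\le A(s)e^{b_s+C/H}$, valid at every $t$. It integrates against the measure $O(Re^{-a_{k-1}+\Delta})$ of each resonant class. This closes only because of the entropy gap $(\kappa+\eta)b_{s_\kappa}<1$ (Lemmas~\ref{lem:entropy} and~\ref{lem:integrated}), a strictly weaker requirement than $\kappa(e^{b_s}-1)<1$.

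\textbf{The lower bound.} Passing from $\Pp(Z>0)\ge L^{-O(1)}$ to $1-o(1)$ is where the real work lies. Independent restarts on disjoint arcs are not available: every arc is driven by the same cycle counts. Two-point decorrelation is governed by $d_R(t,u)$, the minimum of $\|jt+lu\|_\T$ over small nonzero $(j,l)$, not by $|t-u|$. For example $u=-t$ gives identical moduli, so splitting pairs by the scale of $|t-u|$ misclassifies them.

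The paper instead proves $\E Z_{\mathcal D}^2/(\E Z_{\mathcal D})^2\to1$ directly, by three devices.
\begin{itemize}
\item It removes the scales $j\le e^{r}$, $r=r_*\ell$, from the path.
\item It places the barrier a gap $G=g_*\ell$ below the frontier, so that non-separated pairs contribute $o(1)$ after the area bound \eqref{eq:near-area}.
\item It compares tilted, smoothed kernels with error $mL^{-J}=o(p_L^2)$, so far pairs factor relative to the rare event.
\end{itemize}
The low scales are then restored on the random set $\{X_0\ge-C_2r\}\cap\mathcal D_L$. This is why the lower bound is proved uniformly over all measurable $\mathcal D$ of measure at least $\tfrac12$.

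\textbf{The long cycles.} The cycles held in the reservoir number about $4\kappa\log\log n$, not $O(1)$. Your plan never reinserts them, so $\sum_{l>b}C_l\log|1-e(lt)|$ at the chosen high point is uncontrolled. The paper handles this with the stability bound of Lemma~\ref{lem:insertion} and the exact conditional cross-moment of Lemma~\ref{lem:crossmass}, which together cost only $O(\log\log n)$.
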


The constant is chosen before the error tolerance and serves every
admissible integer $k$ at once, including counts far from the typical
Ewens value. Both the upper and the lower half of \eqref{eq:main} are
proved under the exact constraint; neither is obtained by averaging over
$K_n$.

Theorem~\ref{thm:main} transfers back to the Ewens family. Fix
$\theta>0$ and put $a_\theta=\lambda(s_\theta)/s_\theta>0$. Since
$K_n/\log n\to\theta$ in probability, one may apply \eqref{eq:main} with
$\kappa=K_n/\log n$ and expand $v$ to first order at $\theta$; the
quadratic remainder is $\Op(1)$. With probability tending to one and
with a fixed constant,
\begin{equation}\label{eq:random-center}
 \bigl|M_n-v(\theta)\log n-a_\theta(K_n-\theta\log n)\bigr|
   \le C\log\log n,
\end{equation}
and consequently, by the classical central limit theorem for $K_n$,
\begin{equation}\label{eq:joint}
 \left(\frac{K_n-\theta\log n}{\sqrt{\theta\log n}},\
       \frac{M_n-v(\theta)\log n}{a_\theta\sqrt{\theta\log n}}\right)
 \Longrightarrow(G,G),\qquad G\sim\mathcal N(0,1).
\end{equation}
This is Corollary~\ref{cor:ewens}. The extremal speed is set by an
environment --- by how many cycles the permutation happens to have ---
and that environment is itself asymptotically Gaussian; the maximum
follows it. Statement \eqref{eq:joint} is joint weak convergence, and no
assertion about moments or about a limit for the Pearson correlation is
made.

\subsection{Discussion}\label{subsec:discussion}

\paragraph{The speed function.} The map $\kappa\mapsto v(\kappa)$ is
strictly increasing and strictly concave: implicit differentiation of
\eqref{eq:critical} gives \eqref{eq:v-derivatives}, namely
$v'(\kappa)=\lambda(s_\kappa)/s_\kappa>0$ and
$v''(\kappa)=-\{\kappa^3s_\kappa^3\lambda''(s_\kappa)\}^{-1}<0$. More
cycles raise the extremal speed, with diminishing returns. The first
derivative is exactly the coefficient $a_\theta$ in
\eqref{eq:random-center}, which is why the random centring is linear in
$K_n$ at the accuracy considered here. Since $\lambda'(s)<\log2$ for
every $s>0$ we have $v(\kappa)<\kappa\log2$, in agreement with the
deterministic bound $M_n\le K_n\log2$; the two are asymptotically equal
as $\kappa\downarrow0$, where a permutation with few cycles can make all
of its factors nearly extremal at once. At $\kappa=1$ equation
\eqref{eq:critical} is the Legendre-transform equation
$\lambda^*(v(1))=1$, so $v(1)=x_0$ is the Cook--Zeitouni constant and
Theorem~\ref{thm:main} contains their leading order.

\paragraph{The localization window.} The width $C\log\log n$ in
\eqref{eq:main} is the natural limit of the present method. For
branching random walks the second-order term of the maximum is
logarithmic in the number of generations, and the number of effective
generations here is of order $\log n$; a correction of size $\log\log n$
is therefore expected in \eqref{eq:main}, and Theorem~\ref{thm:main}
does not resolve it. We identify neither the coefficient of such a
correction, nor a limit law for the residual, nor an extremal point
process. On the other hand $\log\log n\ll\sqrt{\log n}$, so the window
is far below the scale of \eqref{eq:random-center} and \eqref{eq:joint};
those two statements are sharp at leading order.

\begin{remark}\label{rem:family}
Since \eqref{eq:conditional-law} does not involve $\theta$,
Theorem~\ref{thm:main} applies verbatim to every law on cycle profiles
of the form
$\Pp(c)\propto w\bigl(\sum_jc_j\bigr)\prod_jj^{-c_j}/c_j!$ with
$w\ge0$: conditioning any such law on its cycle count returns
\eqref{eq:conditional-law}. Whenever the induced count concentrates on a
compact set of values of $k/\log n$, it follows that
$|M_n-m_{n,K_n}|\le C\log\log n$ with probability tending to one for
that law. The Ewens family is the case $w(k)=\theta^k$.
\end{remark}

\subsection{Relation to previous work}\label{subsec:previous}

Beyond \cite{CZ}, several strands bear on the problem. Hughes,
Najnudel, Nikeghbali and Zeindler \cite{HNNZ} study moments,
multiplicative class functions and linear statistics of characteristic
polynomials under generalized Ewens measures, with generating functions
in cycle counts. Dang and Zeindler \cite{DZ} obtain joint central limit
theorems at finitely many fixed observation points under arithmetic
hypotheses on those points. Fran\c{c}ois \cite[Theorem~2.2]{Francois}
proves convergence of generalized-Ewens characteristic polynomials in
the topology of local uniform convergence on the open unit disk. Neither
a finite collection of observation points nor convergence in the
interior controls the maximum on the boundary, which is a supremum over
a family of constraints whose size grows with $n$. The central limit
theorem for $K_n$ used in \eqref{eq:joint} is classical; sharper uniform
asymptotics for the Ewens count distribution are in \cite{KMS}, and the
coefficient asymptotics behind \eqref{eq:basic-coeff} belong to the
classical theory of Stirling numbers of the first kind.

Random centring of an extremum by an environment has precedents. For
branching random walks in a time-inhomogeneous random environment,
Mallein and Mi\l o\'s \cite{MM} identify an environment-dependent centre
together with a logarithmic correction. The mechanism behind
\eqref{eq:random-center} is of that type, with the cycle count playing
the role of the environment, although the model here is a finite
permutation and the proof is self-contained: it uses elementary count
couplings, path estimates and second moments rather than an extremal
theorem for random environments.

Cook and Gu have announced related work on maxima of Poissonian
log-correlated fields \cite{CG}. Public abstracts from January 2025,
October 2025 and January 2026 describe refined maximal behaviour for
related random series or trigonometric polynomials, modelled on a
branching random walk in a random time-dependent environment, and the
research listing describes the work as in preparation. A complete
theorem statement was not available in the sources consulted on
September 10, 2026, so no comparison with a full theorem, and no claim
of priority over that work, is made here. We note only that the object
treated below is the exact finite permutation law under a precise
cycle-count constraint, for which \eqref{eq:mixture} shows that an
unconditional statement is not sufficient.

The coefficient viewpoint continues that of \cite{Lu}. The scalar
principles behind the reservoir estimate of Section~\ref{sec:reservoir}
originate in the hybrid method of Flajolet, Fusy, Gourdon, Panario and
Pouyanne \cite{FFGPP}; the estimate needed here has a cutoff that grows
with $n$ and a complex marker, and is proved directly. The contribution
of the coefficient step is the uniformity of its relative error after
all restrictions of a rare path event have been imposed on a positive
measure.

\subsection{Outline of the proof}\label{subsec:outline}

Throughout the proof $L=\log n$, $\ell=\log L$ and $b=\lfloor
n/L^4\rfloor$. The following is the skeleton; the constants are fixed in
the order set out in Section~\ref{sec:maximum}.

\paragraph{An analytic reservoir.} A fixed-observation hybrid estimate
cannot control the growing family of path restrictions that a maximum
requires. Instead we reserve every cycle longer than $b$. What remains
is a polynomial whose coefficients are positive measures recording the
heights $\log|1-e(jt)|$ at the observation points, and whose degree is
$O(Lb)=o(n)$ under the exact count constraint. The reserved factor
$R_b(z,u)=\exp\bigl(u\sum_{j>b}z^j/j\bigr)$ has one algebraic
singularity and an entire correction; a uniform Hankel estimate and a
saddle point in the marker $u$ show that its coefficients are flat under
every size shift the short cycles can produce, with relative error
$O(1/\ell)$. Because the comparison is between positive measures, it can
be applied \emph{after} all path projections and yields a relative
comparison of the whole restricted measure. Neither the number of
projections, nor the dimension of the space, nor the mass of the event
enters the error. This is Theorem~\ref{thm:positive-transfer}, and it is
what replaces a total-variation approximation whose absolute error would
exceed the probability of the event being measured.

\paragraph{A constrained kernel lower bound.} After the transfer, the
short cycles may be treated as independent harmonic samples in
logarithmic blocks. Grouping the blocks dyadically from both ends
produces $B\asymp\ell$ coarse groups of widths $\mathsf H_j$ that vary
by a bounded factor. Along a chain of endpoint boxes of width
$\sqrt{\mathsf H_j}$ we bound the killed convolution kernels from below
pointwise, using a bridge estimate for the tilted log-sine law
(Lemma~\ref{lem:bridge}). Integrating over the intermediate boxes
cancels the density factors $\mathsf H_j^{-1/2}$ against the box widths,
so that the surviving cost is $\sum_jE_j^2$ rather than
$B\max_jE_j^2$, where $E_j$ measures the deviation of the realized
counts in group $j$. This is what keeps the total cost polynomial,
$p_L\ge L^{-C_*}$ (Lemma~\ref{lem:boxes}), and it is what limits the
final window to $O(\ell)$.

\paragraph{Fourier comparison and a second moment.} On arithmetically
separated points the tilted block kernels are close, in total variation
$O(L^{-J})$ for any fixed $J$, to independent copies of the reference
convolution (Lemma~\ref{lem:smooth}). Since $J$ may be chosen after
$C_*$, that error is $o(p_L^2)$ and the comparison is valid
\emph{relative} to the rare event, not merely in absolute terms. A first
and second moment for the constrained functional then give a high point
in any prescribed set of angles of measure at least one half; the
matching upper bound follows from an integrated moment estimate whose
input is an entropy gap $\kappa b_s<1$ for the block envelope
(Lemma~\ref{lem:entropy}).

\paragraph{Restoring the remaining cycles.} The low-frequency cycles are
restored on a random but admissible set of angles, using only that the
lower bound of the previous step is uniform over such sets. The long
cycles are reinserted by a deterministic stability estimate for the
supremum norm under multiplication by $\prod(1-z^{j_v})$
(Lemma~\ref{lem:insertion}), whose loss is controlled by an exact
conditional factorial-moment identity for
$\bigl(\sum_{j\le b}jC_j\bigr)\bigl(\sum_{l>b}C_l/l\bigr)$
(Lemma~\ref{lem:crossmass}). Both operations move the centre by $O(\ell)$
only.

\subsection{Organization}\label{subsec:organization}

Section~\ref{sec:reservoir} constructs the reservoir, proves the
event-uniform coefficient comparison and identifies the law of the
marked block counts. Section~\ref{sec:kernel} proves the pointwise
killed-convolution estimates, the regularity of the count environment
and the box-chain lower bound. Section~\ref{sec:fourier} supplies the
uniform one- and two-point Fourier comparisons. Section~\ref{sec:maximum}
fixes the constants, runs the moment computation, restores the low and
long cycles and completes the proof of Theorem~\ref{thm:main}.
Section~\ref{sec:consequences} derives Corollary~\ref{cor:ewens}, records
the two-matching application and delimits what is and is not asserted.

\section{An analytic reservoir and a positive coefficient transfer}
\label{sec:reservoir}
Throughout the proof set
\begin{equation}\label{eq:parameters}
 L=\log n,\quad \ell=\log L,\quad b=\lfloor n/L^4\rfloor,
 \quad H_b=\sum_{j=1}^b\frac1j,\quad T=L-H_b=4\ell-\gamma+o(1).
\end{equation}
All assertions are for sufficiently large $n$. Define
\begin{equation}\label{eq:reservoir}
 R_b(z,u)=\exp\left(u\sum_{j>b}\frac{z^j}{j}\right)
 =(1-z)^{-u}\exp\{-uH_b(z)\},\qquad
 H_b(z)=\sum_{j=1}^b\frac{z^j}{j},
\end{equation}
and $r_{N,l}^{(b)}=[z^Nu^l]R_b(z,u)$.

\subsection{A uniform scalar coefficient estimate}
\begin{lemma}[Reservoir coefficient estimate]\label{lem:reservoir}
Fix a compact set $\mathcal U\subset\C$. Uniformly for $u\in\mathcal U$,
$b\ge2$, and $N\ge4b$,
\begin{align}\label{eq:reservoir-scalar}
 [z^N]R_b(z,u)
 &=e^{-uH_b}h_N(u)+E_{N,b}(u),\\[-2mm]
 |E_{N,b}(u)|
 &\le C\frac bN e^{-\Ree(u)H_b}N^{\Ree(u)-1}
       +Cb^C e^{-N/(2b)}.\nonumber
\end{align}
The estimate is absolute, including at zeros of $1/\Gamma(u)$.
\end{lemma}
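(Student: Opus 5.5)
The plan is to write $R_b(z,u)=(1-z)^{-u}e^{-uH_b}\,G_b(z,u)$ with
\[
 G_b(z,u)=\exp\{-u(H_b(z)-H_b)\},
\]
where $H_b=H_b(1)$. Then $[z^N]R_b=\sum_{m\ge0}g_m h_{N-m}(u)e^{-uH_b}$ with $g_m=[z^m]G_b$. Since $G_b(1,u)=1$, the error is
\[
 E_{N,b}(u)=e^{-uH_b}\sum_{m}g_m\bigl(h_{N-m}(u)-h_N(u)\bigr),
\]
with $h_{N-m}:=0$ for $m>N$.

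The key analytic input is that $H_b(z)-H_b$ is a polynomial of degree $b$ that is small near $z=1$ on the scale $|1-z|\lesssim 1/b$. For $|z|\le 1+1/b$ we have
\[
 |H_b(z)-H_b|\le\sum_{j\le b}|z^j-1|/j\le C b|1-z|+C,
\]
and more crudely $|H_b(z)|\le H_b+e$ on $|z|=1+1/b$. From the second bound and Cauchy on the circle of radius $1+1/b$ we get $|g_m|\le Cb^{C}e^{-m/b}$, uniformly for $u\in\mathcal U$ since $e^{|u|(\log b+C)}\le b^C$. From the first, a weighted moment bound follows: $\sum_m m|g_m|r^m\le C b$ for $r=1$. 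To see this, note that $\sum_m m g_m=\partial_zG_b(1)=-u\sum_{j\le b}1=-ub$, and use the analogous absolute version with $|g_m|$ obtained from the exponential decay $|g_m|\le Cb^Ce^{-m/b}$ together with the Cauchy bound at radius $1+1/b$.

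Next split the sum at $m\le N/2$. For $m\le N/2$ the classical estimate $h_N(u)=N^{u-1}/\Gamma(u)+O(N^{\Ree u-2})$ gives the increment bound
\[
 |h_{N-m}(u)-h_N(u)|\le C\frac mN N^{\Ree u-1}.
\]
This bound is uniform on compact $\mathcal U$ and remains valid at the poles of $\Gamma$, where $h_N(u)$ is a polynomial in $u$ and the bound follows by analyticity in $u$ or by the maximum principle on a small circle in $u$. It is most cleanly obtained from the integral $h_N(u)-h_{N-m}(u)=\sum_{N-m<i\le N}[z^i](1-z)^{-u}$ and $|[z^i](1-z)^{-u}-[z^{i-1}](1-z)^{-u}|=|h_i(u-1)|\le Ci^{\Ree u-2}$. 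Summing against $|g_m|$ and using the first-moment bound yields the main error term $C(b/N)e^{-\Ree(u)H_b}N^{\Ree u-1}$.

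For $m>N/2$ we use $|g_m|\le Cb^Ce^{-m/b}$ and the crude bound $|h_i(u)|\le C(i+1)^{C}$. This gives a contribution of at most $Cb^{C}N^{C}e^{-N/(2b)}$, and the term $-h_N\sum_{m>N/2}g_m$ is handled the same way. Because $N\ge 4b$, the factor $N^C$ can be absorbed: $N^Ce^{-N/(4b)}\le Cb^C$. This costs half of the exponent, so to land on the stated $e^{-N/(2b)}$ I would instead take Cauchy radius $1+2/b$ in the bound for $g_m$, getting $e^{-2m/b}$ with a larger $b^C$. The factor $e^{-uH_b}$ is of size $b^{C}$ and is absorbed into the same term.

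The main obstacle is the first-moment step, $\sum_m m|g_m|\lesssim b$ uniformly in complex $u$. The signs of $g_m$ are uncontrolled, so the exact derivative identity does not suffice. I would bound it by splitting at $m\le Kb$ and $m>Kb$. For $m\le Kb$ the target is $|g_m|\le C/b$, i.e.\ $g_m$ of total mass $O(1)$ spread over a window of length $b$, which I would get from Cauchy on $|z|=1+1/m$ using $|H_b(z)-H_b|\le C+Cb/m$. This actually gives $|g_m|\le C e^{C b/m}$, which is too weak, so if it fails I would fall back on a direct saddle or Hankel-contour estimate for $g_m$ at scale $b$, using that $G_b(z,u)\approx\exp(-u\,\phi(b(1-z)))$ with $\phi$ the entire function $\int_0^1(1-e^{-xt})\,dt/t$.
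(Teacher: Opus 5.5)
\textbf{There is a genuine gap: the first-moment bound $\sum_m m|g_m|\le Cb$ is false.} Your exact decomposition $E_{N,b}(u)=e^{-uH_b}\sum_m g_m\{h_{N-m}(u)-h_N(u)\}$ is fine. So is the increment bound via $h_i(u)-h_{i-1}(u)=h_i(u-1)$, and so is the tail $m>N/2$ (with a Cauchy radius somewhat larger than $1+1/b$). Beyond the first-moment bound itself, no sharper estimate of the individual $g_m$ can rescue an argument that takes absolute values of them.

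Since $\exp(-uH_b(z))=(1-z)^u\exp\bigl(u\sum_{j>b}z^j/j\bigr)$ and the last factor is $1+O(z^{b+1})$, one has exactly $g_m=e^{uH_b}h_m(-u)$ for $0\le m\le b$. So $g_0=e^{uH_b}$ has modulus $\asymp b^{\Ree u}$. Moreover $|g_m|\asymp b^{\Ree u}m^{-\Ree u-1}$ for $1\le m\le b$ when $u\notin\{0,1,2,\ldots\}$. Indeed $G_b(z)\approx(e^{\gamma}b(1-z))^{u}$ for $|1-z|\gg 1/b$; note also that your $\exp(-u\phi)$ should read $\exp(+u\phi)$. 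Your heuristic ``$|g_m|\le C/b$, total mass $O(1)$'' therefore fails as soon as $\Ree u>0$.

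Concretely, take real $u\in(1,2)$:
\begin{itemize}
\item every $g_m$ with $2\le m\le b$ is positive, and $\sum_{2\le m\le b}m\,g_m\asymp b^{u}$;
\item $h_i(u-1)>0$, which gives $|h_{N-m}(u)-h_N(u)|\ge c\,mN^{u-2}$ for $m\le N/4$;
\item hence $e^{-uH_b}\sum_m|g_m|\,|h_{N-m}(u)-h_N(u)|\ge c\,e^{-uH_b}N^{u-2}b^{u}$, which exceeds the target $e^{-uH_b}N^{u-2}b$ by $b^{u-1}$.
\end{itemize}
The cancellation you discard is the signed identity $\sum_{m\ge1}m\,h_m(-u)=0$ for $\Ree u>1$. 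Taylor-expanding $h_{N-m}$ to order $K$ and using the signed moments $G_b^{(k)}(1)=O(b^k)$ does not repair this. The reason is that $g_{b+1}=e^{uH_b}\{h_{b+1}(-u)+u/(b+1)\}$ again has modulus $\asymp b^{\Ree u-1}$, so the absolute remainder near $m=b$ is still too large, by a factor of order $b^{\Ree u-1}(b/N)^{K-1}$.

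This is not a corner case. The lemma is stated for an arbitrary compact $\mathcal U$, and Lemma~\ref{lem:marker} uses it on circles $|u|=l/T_N$ of radius about $\kappa$, possibly larger than one. There a relative error of order $b^{\Ree u}/N$ is useless.

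\textbf{Missing idea: bound the difference function analytically on a Hankel contour, as the paper does.} The cancellation among the $g_m$ has to be kept by estimating the analytic function $e^{-uH_b}(1-z)^{-u}\{G_b(z)-1\}$ itself. Continue to the disk $|z|<1+1/b$, slit along $[1,1+1/b]$.
\begin{itemize}
\item On the outer circle $|z|=1+1/b$, $R_b$ is bounded. Away from the slit this is because $H_b(z)+\log(1-z)=-\int_0^z w^b(1-w)^{-1}\dd w=O(1)$; near the slit it follows from $|H_b(z)-H_b|\le Cb|z-1|$. The comparison function is only $O(b^C)$ there. This produces the $b^Ce^{-N/(2b)}$ term.
\item On the two banks $z=1+t$, $1/N\le t\le 1/b$, and on the indentation $|z-1|=1/N$, one needs only $|G_b(z)-1|\le Cb|z-1|$. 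This gives a contribution at most $Ce^{-\Ree(u)H_b}\bigl(b\int_{1/N}^{1/b}t^{1-\Ree u}e^{-Nt/2}\dd t+bN^{\Ree u-2}\bigr)\le Ce^{-\Ree(u)H_b}bN^{\Ree u-2}$.
\end{itemize}
The bound is uniform for $\Ree u$ in a compact set, for every value of $\Ree u$, and involves no division by $1/\Gamma(u)$.
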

\begin{proof}
Continue \eqref{eq:reservoir} to the disk of radius $1+1/b$, slit along
$[1,1+1/b]$, using the branch of $\log(1-z)$ analytic at zero. Put
$\rho=1+1/b$. On $|z|=\rho$ the quantity
$H_b(z)+\log(1-z)$ is bounded by an absolute constant on either side
of the slit. To verify this, for $|\arg z|\le2/b$ use
$|H_b(z)-H_b|\le Cb|z-1|\le C$ and
$\log|1-z|=-\log b+O(1)$. Otherwise the identity
\[
 H_b(z)+\log(1-z)=-\int_0^z\frac{w^b}{1-w}\dd w
\]
on the radial segment gives a bound $C/(b|\arg z|)\le C$; here
$|1-te^{i\varphi}|\ge c(|1-t|+|\varphi|)$ for
$0\le t\le\rho$, $|\varphi|\le\pi$, after harmless changes of constants.
Thus $R_b$ is bounded on the outer circle, uniformly on $\mathcal U$.
The comparison function $e^{-uH_b}(1-z)^{-u}$ has at worst polynomial
size in $b$ there. Their outer-contour contributions to the coefficient
are at most $Cb^C\rho^{-N}\le Cb^C e^{-N/(2b)}$.

Deform Cauchy's contour into the outer circle, both sides of the slit,
and an indentation $|z-1|=1/N$. For $|w|\le1/b$,
\[
 |H_b(1+w)-H_b|\le Cb|w|,\qquad
 |e^{-u(H_b(1+w)-H_b)}-1|\le Cb|w|.
\]
On the indentation, $|z|^{-N-1}\le C$ and its length is $O(N^{-1})$.
The contribution of the difference of the two functions is at most
$Ce^{-aH_b}N^{a-1}(b/N)$, where $a=\Ree u$. On either side of the slit
write $z=1+t$, $1/N\le t\le1/b$. The remaining contribution is bounded by
\[
 Ce^{-aH_b}b\int_{1/N}^{1/b}t^{1-a}e^{-Nt/2}\dd t
 \le Ce^{-aH_b}N^{a-1}\frac bN.
\]
The last integral is uniform for $a$ in a compact interval; its lower
limit is $1/N$, so it poses no integrability problem when $a\ge2$.
Branch moduli contribute bounded factors on $\mathcal U$. This proves
\eqref{eq:reservoir-scalar}, without moving the indentation to zero.

\paragraph{Uniformity of the contour bounds.}
Here and below a compact complex marker set is enclosed in a disk
$|u|\le M$, with $M$ fixed. The estimate on the outer circle can be
checked without an infinite series outside its disk of convergence.
For $z=\rho e^{i\varphi}$ with $|\varphi|>2/b$, use the radial integral
above and $|1-te^{i\varphi}|\ge c|\varphi|$. Its absolute value is at most
\[
 \frac{C}{|\varphi|}\int_0^\rho t^b\dd t
 =\frac{C\rho^{b+1}}{(b+1)|\varphi|}
 \le \frac{C}{b|\varphi|}.
\]
For $|\varphi|\le2/b$, one has $|z-1|\asymp b^{-1}$ and
$|z-1|\le C/b$. The elementary identity
$z^j-1=(z-1)\sum_{v=0}^{j-1}z^v$ gives
$|H_b(z)-H_b|\le Cb|z-1|$. Since
$H_b=\log b+O(1)$ and the branch argument of $1-z$ is bounded, this
also bounds $H_b(z)+\log(1-z)$ in the small-angle sector.
These arguments apply to the two boundary values at the slit.

For clarity, the remaining power of $N$ in the bank integral is obtained
by the substitution $v=Nt$:
\[
 b\int_{1/N}^{1/b}t^{1-a}e^{-Nt/2}\dd t
 =bN^{a-2}\int_1^{N/b}v^{1-a}e^{-v/2}\dd v
 \le C_M bN^{a-2},\qquad |a|\le M.
\]
The last constant is finite because the lower endpoint is one and the
upper tail is exponential, uniformly in $a$. On the indentation,
$|(1-z)^{-u}|\le C_M N^a$, the analytic correction is $O_M(b/N)$,
and the arc length is $O(1/N)$. All terms are therefore absolute
estimates of the same difference of analytic functions. In particular
none was obtained by division by $h_N(u)$ or by $1/\Gamma(u)$; their
possible zeros cause no exception.

\end{proof}

\begin{lemma}[Marker extraction]\label{lem:marker}
Uniformly for $k/\log n$ in a positive compact interval,
\begin{equation}\label{eq:akn}
 a_{n,k}=\frac{L^k}{n k!\,\Gamma(k/L)}\{1+O(L^{-1})\}.
\end{equation}
Let $b$ be as in \eqref{eq:parameters}, $0\le d\le C_0 Lb$,
$N=n-d$, and $T_N=\log N-H_b$. Uniformly when $l/T_N$ belongs to a
positive compact interval,
\begin{equation}\label{eq:reservoir-marker}
 r_{N,l}^{(b)}
 =\frac{T_N^l}{N l!\,\Gamma(l/T_N)}
   \left\{1+O\left(T_N^{-1}+\sqrt{T_N}\,b/N\right)\right\}.
\end{equation}
Consequently, if $l/T$ belongs to a fixed positive compact interval,
\begin{equation}\label{eq:flatness}
 \sup_{0\le d\le C_0Lb}
 \left|\frac{r_{n-d,l}^{(b)}}{r_{n,l}^{(b)}}-1\right|
 \le \eps_n,\qquad \eps_n=O(\ell^{-1}+L^{-3})=o(1).
\end{equation}
\end{lemma}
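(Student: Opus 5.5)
The plan is to extract the $u$-coefficient by a saddle point in the marker variable, using Lemma~\ref{lem:reservoir} as the input for \eqref{eq:reservoir-marker} and the classical singularity analysis of $h_n(u)$ for \eqref{eq:akn}.

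\emph{Step 1: the estimate \eqref{eq:akn}.} Start from the uniform expansion
\[
 h_n(u)=\frac{n^{u-1}}{\Gamma(u)}\{1+O(1/n)\},
\]
valid for $u$ in a compact disk and understood absolutely, so that it is correct also at zeros of $1/\Gamma$. Write
\[
 a_{n,k}=\frac1{2\pi i}\oint h_n(u)\,u^{-k-1}\dd u
\]
on the circle $|u|=r$ with $r=k/L$; this $r$ lies in the fixed compact interval. The integrand is
\[
 n^{-1}\,e^{uL}u^{-k-1}/\Gamma(u).
\]
This is the standard Hankel/saddle computation for the Stirling numbers of the first kind, as in Moser--Wyman or Hwang.
\begin{itemize}
\item Parametrize $u=re^{i\phi}$.
\item The modulus $e^{rL\cos\phi}$ gives Gaussian concentration in a window $|\phi|\lesssim L^{-1/2}$. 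Outside $|\phi|\ge L^{-1/3}$ the contribution is exponentially small relative to the peak.
\item Near $\phi=0$, expand the factor $1/\Gamma(u)$ to first order. The linear term integrates to zero by symmetry, and the quadratic term costs $O(1/L)$.
\item The main Gaussian integral reproduces $\frac{L^k}{k!}$ through Stirling's formula applied to $\frac{1}{2\pi i}\oint e^{uL}u^{-k-1}\dd u=L^k/k!$.
\end{itemize}
The leading factor is therefore $1/\Gamma(k/L)$, with relative error $O(L^{-1})$.

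\emph{Step 2: the estimate \eqref{eq:reservoir-marker}.} Apply the same contour in $u$ to Lemma~\ref{lem:reservoir}, with $N=n-d\ge4b$; this holds since $d\le C_0Lb=o(n)$. The main term is
\[
 e^{-uH_b}h_N(u)=N^{-1}e^{uT_N}/\Gamma(u)\,\{1+O(1/N)\}.
\]
Repeating Step 1 with $L$ replaced by $T_N$ and $r=l/T_N$ gives
\[
 \frac{T_N^l}{N\,l!\,\Gamma(l/T_N)}\{1+O(T_N^{-1})\}.
\]
The error $E_{N,b}$ is controlled on the circle $|u|=r$ by its first term,
\[
 \frac bN\, N^{-1}e^{rT_N},
\]
whose $u$-coefficient contributes at most $\frac bN N^{-1}e^{rT_N}r^{-l}$. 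Comparing with the Gaussian-width main term $\asymp N^{-1}e^{rT_N}r^{-l}T_N^{-1/2}$ gives a relative error $\sqrt{T_N}\,b/N$. The second term $b^Ce^{-N/(2b)}$ is superpolynomially small, since $N/b\ge L^4/2$.

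\emph{Main obstacle.} This step is the only delicate one. $T_N$ is only of order $\ell$, so the saddle is weakly concentrated, and one must check that the tail $|\phi|\gtrsim T_N^{-1/3}$ is still negligible. It is only $e^{-cT_N^{1/3}}$, not polynomially small in $L$. Before that comparison can be made, $1/\Gamma(u)$ must be shown to be bounded on the whole circle. To handle the tails I would first try the sharper route of not splitting at all: bound the full integral by
\[
 \int e^{rT_N(\cos\phi-1)}\dd\phi\lesssim T_N^{-1/2}
\]
for the error terms, and use the exact Taylor bound $|1/\Gamma(re^{i\phi})-1/\Gamma(r)|\le C|\phi|$ for the main term. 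This gives a relative $O(T_N^{-1})$ without any exponential tail loss.

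\emph{Step 3: the flatness \eqref{eq:flatness}.} Take the ratio of \eqref{eq:reservoir-marker} at $N=n-d$ and at $N=n$.
\begin{itemize}
\item The prefactor satisfies $n/(n-d)=1+O(Lb/n)=1+O(L^{-3})$.
\item Since $T_{n-d}-T_n=\log(1-d/n)=O(L^{-3})$, we get $(T_{n-d}/T_n)^l=\exp\{O(lL^{-3}/T)\}=1+O(L^{-3})$.
\item The change in $\Gamma(l/T_N)$ is likewise $O(L^{-3})$, because $\Gamma$ is smooth and nonvanishing on the compact interval.
\item The correction terms give $O(T^{-1}+\sqrt T\,L^{-4})=O(\ell^{-1})$.
\end{itemize}
Combining these yields \eqref{eq:flatness} with $\eps_n=O(\ell^{-1}+L^{-3})$.
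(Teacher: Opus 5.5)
Your argument is correct in substance, but you extract the marker coefficient by a different mechanism than the paper. You use a contour saddle on $|u|=l/T_N$, with an angular window, a tail bound and cancellation of the linear term.

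The paper never integrates over the circle for the main term. It writes $[u^j]e^{xu}g(u)=\sum_{m\le j}g_m x^{j-m}/(j-m)!$ and divides by $x^j/j!$ to get $\sum_{m\le j}g_m(j)_m/x^m$. It then compares this termwise with $g(j/x)=\sum_m g_m(j/x)^m$, using $|1-(j)_m/j^m|\le m(m-1)/(2j)$ and Cauchy bounds $|g_m|\le M_0R_0^{-m}$ on a disk of radius $R_0>2\sup j/x$. The $O(x^{-1})$ error then comes straight out of the falling-factorial comparison, with no window, no tail and no parity argument, and it is uniform even at the weak scale $x=T_N\asymp\ell$.

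Your route is the classical Moser--Wyman/Hwang saddle computation. It is more flexible (it yields full expansions) but needs the bookkeeping you describe. Your handling of $E_{N,b}$ matches the paper: you take its supremum on the circle times $r^{-l}$ and compare with Stirling for $T_N^l/l!$, giving $\sqrt{T_N}\,b/N$. Your flatness step also matches: you take the explicit ratio of main terms, where the paper integrates the logarithmic derivative in $N$.

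One correction to your ``main obstacle'' paragraph. The sharper route as you state it gives only relative $O(T_N^{-1/2})$, not $O(T_N^{-1})$. With the first-order bound $|1/\Gamma(re^{i\phi})-1/\Gamma(r)|\le C|\phi|$ taken in absolute value, one gets $\int e^{rT_N(\cos\phi-1)}|\phi|\dd\phi\asymp T_N^{-1}$, while the main term carries $T_N^{-1/2}$. That would give only $\eps_n=O(\ell^{-1/2}+L^{-3})$, weaker than the statement.

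The fix is that the linear part contributes exactly zero: for $r=l/T_N$,
$\frac1{2\pi i}\oint e^{uT_N}u^{-l-1}(u-r)\dd u=T_N^{l-1}/(l-1)!-rT_N^l/l!=0$.
The second-order remainder is bounded by $C\phi^2$ on the whole circle, so it gives relative $O(T_N^{-1})$ with no splitting at all.

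Alternatively, keep your splitting; the obstacle is illusory. The tail $e^{-cT_N^{1/3}}$ only has to be $o(T_N^{-1})$, not small in terms of $L$. Also $1/\Gamma$ is entire, so it is bounded on the circle automatically.
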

\begin{proof}
For bounded $u$, the gamma-ratio formula, interpreted analytically at
nonpositive integers, gives
\[
 h_N(u)=N^{u-1}\left\{\frac1{\Gamma(u)}+O(N^{-1})\right\},
\]
with an absolute uniform error of modulus $CN^{\Ree u-2}$.
For an entire function $g$, $x\to\infty$, and $j/x$ in a positive
compact interval, the elementary marker saddle is
\begin{equation}\label{eq:elementary-saddle}
 [u^j]e^{xu}g(u)=\frac{x^j}{j!}\{g(j/x)+O(x^{-1})\}.
\end{equation}
For completeness, write $g(u)=\sum g_m u^m$ and divide the left-hand
side by $x^j/j!$. It becomes
$\sum_{m\le j}g_m(j)_m/x^m$. On a disk of radius greater than twice the
largest $j/x$, Cauchy's coefficient bound and
$|1-(j)_m/j^m|\le m(m-1)/(2j)$ give the error $O(j^{-1})$.
The omitted tail is exponentially small. This proves
\eqref{eq:elementary-saddle}; it also holds for a function analytic on
such a fixed disk.

Apply it with $g=1/\Gamma$ and $x=L$. The coefficient of the gamma-ratio
error on $|u|=k/L$ is at most
$Cn^{-2}(k/L)^{-k}e^k$, which is smaller than the main term by
$O(\sqrt L/n)$. This proves \eqref{eq:akn}.
For the reservoir, Lemma~\ref{lem:reservoir} gives the main function
$N^{-1}e^{T_Nu}/\Gamma(u)$. On $|u|=l/T_N$, Cauchy's bound for its error,
compared with Stirling's estimate for $T_N^l/l!$, is
$O(\sqrt{T_N}b/N)$ relatively. The outer-contour error is negligible:
$N/b\asymp L^4$ whereas $\log N\asymp L$, so even a fixed polynomial
in $N,b$ times $e^{-N/(2b)}$ tends to zero faster than any negative
power of $L$. The main-function saddle gives the additional $O(T_N^{-1})$.

Finally $T_N-T=\log(N/n)=O(d/n)$, and the logarithmic derivative in $N$
of $T_N^l/(N\Gamma(l/T_N))$ is $O(N^{-1})$ uniformly in the indicated
range. Equivalently, take the ratio of the explicit main terms.
Together with \eqref{eq:reservoir-marker}, this proves \eqref{eq:flatness}.

\paragraph{The two normalizations in the marker argument.}
To detail the power-series estimate, put $t=j/x$ and choose a fixed
radius $R_0>2\sup t$. If $|g_m|\le M_0R_0^{-m}$, then
\[
 \left|\sum_{m=0}^j g_m\frac{(j)_m}{x^m}-g(t)\right|
 \le \frac{M_0}{2j}\sum_{m=0}^j m(m-1)(t/R_0)^m
       +M_0\sum_{m>j}(t/R_0)^m
 =O(x^{-1}).
\]
The bound for $(j)_m$ follows from
$1-\prod_{v=0}^{m-1}(1-v/j)\le\sum_{v=0}^{m-1}v/j$.
This calculation is valid for complex coefficients as well. For
$g=1/\Gamma$ its value at the positive saddle is bounded away from
zero on the prescribed interval. Thus the absolute $O(x^{-1})$
error in the braces becomes a relative error there.

In the reservoir application, let $t=l/T_N$. The Cauchy error coming
from the first term of \eqref{eq:reservoir-scalar} is at most
$C(b/N^2)t^{-l}e^{tT_N}$. The positive main coefficient has size
$N^{-1}(T_N^l/l!)g(t)$. Their ratio is at most
\[
 C\frac bN\,\frac{e^l l!}{l^l}
 \le C\frac bN\sqrt l=O\!\left(\sqrt{T_N}\,b/N\right).
\]
For flatness, keep the integer $l$ and the cutoff $b$ fixed while the
real size variable $N$ moves from $n-d$ to $n$. If
$F(N)=T_N^l/(N\Gamma(l/T_N))$, and
$\psi=\Gamma'/\Gamma$ on the positive real axis, then
\[
 \frac{\dd}{\dd N}\log F(N)
 =\frac1N\left\{-1+\frac l{T_N}
            +\frac l{T_N^2}\psi(l/T_N)\right\}.
\]
The braces are uniformly bounded. Also $d/n=O(L^{-3})$ and
$T_N\asymp T\asymp\ell$. Integrating the derivative bounds the ratio
of main terms by $1+O(d/n)$; the two relative saddle errors add
$O(T^{-1}+\sqrt T\,b/n)$. This proves the stated $\eps_n$ uniformly
over every allowed size shift with a single constant.

\end{proof}

\subsection{One transfer for the entire constrained kernel}
For finite positive measures, write $\mu\le\nu$ if the inequality holds
on every measurable set.
\begin{theorem}[Positive coefficient transfer]\label{thm:positive-transfer}
Let $l/T$ stay in a positive compact interval and let
$\boldsymbol B(z)=\sum_{d=0}^{D}z^d\nu_d$, where each $\nu_d$ is a
finite positive measure on any measurable space and $D\le C_0Lb$.
Then
\begin{equation}\label{eq:order-transfer}
 (1-\eps_n)r_{n,l}^{(b)}\boldsymbol B(1)
 \le [z^n]\{[u^l]R_b(z,u)\,\boldsymbol B(z)\}
 \le(1+\eps_n)r_{n,l}^{(b)}\boldsymbol B(1).
\end{equation}
In particular the bound holds after any collection of positive path
projections or weights. Its error does not depend on the dimension of
the space, the number of projections, or the mass of the event being
measured. If $\boldsymbol B(1)$ has positive total mass, normalizing
both sides to probability measures changes the multiplicative error only
by $1+O(\eps_n)$.
\end{theorem}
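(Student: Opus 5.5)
The plan is to expand the extraction term by term and reduce everything to the flatness estimate \eqref{eq:flatness} of Lemma~\ref{lem:marker}. Since $\boldsymbol B(z)$ is a polynomial in $z$ of degree $D$ with measure-valued coefficients and carries no marker variable, we have
\[
 [z^n]\{[u^l]R_b(z,u)\,\boldsymbol B(z)\}
 =\sum_{d=0}^{D} r_{n-d,l}^{(b)}\,\nu_d ,
\]
a finite sum of positive measures with scalar weights. The first step is to confirm that each weight $r_{n-d,l}^{(b)}$ is real and nonnegative. This is immediate, because $R_b(z,u)=\exp(u\sum_{j>b}z^j/j)$ has nonnegative coefficients in both variables. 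The identity above therefore holds setwise, on every measurable set $A$, as an identity of nonnegative reals.

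The second step applies \eqref{eq:flatness}. The hypotheses match: $l/T$ lies in a fixed positive compact interval and $0\le d\le D\le C_0Lb$. Flatness then gives
\[
 (1-\eps_n)r_{n,l}^{(b)}\le r_{n-d,l}^{(b)}\le(1+\eps_n)r_{n,l}^{(b)}
\]
uniformly in $d$. We multiply these inequalities by $\nu_d(A)\ge0$ and sum over $d$. Since $\sum_d\nu_d(A)=\boldsymbol B(1)(A)$, this yields \eqref{eq:order-transfer} on every set $A$, which is exactly the order relation $\le$ between measures.

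The remaining assertions are formal consequences of this argument. A positive path projection or weight replaces each $\nu_d$ by another positive measure $\nu_d'$, for example a pushforward, a restriction to an event, or integration against a nonnegative function. The same argument applies verbatim to the new measures. The constant $\eps_n$ comes only from Lemma~\ref{lem:marker}, so it does not depend on the measurable space, on the number of projections, or on the mass $\boldsymbol B(1)(A)$. For the normalization, we divide the two-sided bound for a set $A$ by the two-sided bound for the whole space. The resulting ratio lies between $(1-\eps_n)/(1+\eps_n)$ and $(1+\eps_n)/(1-\eps_n)$, and both are $1+O(\eps_n)$ since $\eps_n\to0$.

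There is no real obstacle once Lemma~\ref{lem:marker} is granted, because all the analytic work is contained in \eqref{eq:flatness}. The only point to check carefully is the nonnegativity of $r_{N,l}^{(b)}$. Without it, multiplying the inequalities by $\nu_d(A)$ would fail to preserve their direction. Nonnegativity holds directly from the exponential generating form of $R_b$, so it needs no separate estimate.
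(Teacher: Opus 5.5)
Your proof is correct and is the paper's own argument: write the extracted measure as $\sum_{d\le D} r_{n-d,l}^{(b)}\nu_d$, apply \eqref{eq:flatness} term by term, and divide by the whole-space bound to get the normalized statement. One small correction concerns the point you single out as needing care. Multiplying by $\nu_d(A)\ge0$ already preserves the direction of the inequalities, whatever the sign of $r_{n-d,l}^{(b)}$. What the argument actually uses is $r_{n,l}^{(b)}>0$, for two reasons: it turns the ratio bound \eqref{eq:flatness} into the two-sided bound, and, together with $\eps_n<1$, it makes the normalizing denominator positive. This positivity follows from the positive main term in \eqref{eq:reservoir-marker}.
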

\begin{proof}
The middle measure is exactly
$\sum_{d=0}^D r_{n-d,l}^{(b)}\nu_d$.
Apply \eqref{eq:flatness} term by term. The last assertion follows by
applying the same inequalities to the whole space and dividing.
No norm bound for a product of projection operators is needed.

\paragraph{Normalization and simultaneous restrictions.}
Here is the exact division used in the last assertion. Write
$\nu=\sum_d\nu_d$, $r=r_{n,l}^{(b)}$, and
$\mu=\sum_d r_{n-d,l}^{(b)}\nu_d$. For large $n$, $r>0$ and
$0\le\eps_n<1$. If $0<\nu(\Omega)<\infty$, the whole-space estimate
gives $0<\mu(\Omega)<\infty$. For every measurable $E$,
\[
 \frac{1-\eps_n}{1+\eps_n}\frac{\nu(E)}{\nu(\Omega)}
 \le\frac{\mu(E)}{\mu(\Omega)}
 \le\frac{1+\eps_n}{1-\eps_n}\frac{\nu(E)}{\nu(\Omega)}.
\]
If $\nu(E)=0$, both the middle and the reference event probabilities
are zero, so no division by an event probability is needed. More
generally a nonnegative measurable weight $w$ can be included by
replacing $\nu_d(E)$ with $\int_E w\dd\nu_d$, provided these weighted
measures are finite. A conjunction of path inequalities is exactly
such a weight, namely its indicator. All the inequalities are
imposed on the stored increment vector before forming this sum.

For the actual short-cycle application, fix the counts $q_i$ and let
$q=\sum_iq_i$, $l=k-q$. The product of the unnormalized harmonic
sample masses has total mass $\prod_i H_i^{q_i}/q_i!$. Grouping it
by its total size gives the measures $\nu_d$ above. Every possible
size is at most $qb\le kb$. Repeated samples and the factorial
$q_i!$ give precisely the cycle weights in
\eqref{eq:conditional-law}; on normalization this is the usual
multinomial expansion of the harmonic sampling law. Thus the
reference event $E$ can specify the entire multiblock path, its
endpoint, and restrictions on several observation points at once.
The one reservoir factor depends only on $d$ and $l$, irrespective
of how that event was described.

\end{proof}

Here is its concrete use. Partition $\{1,\ldots,b\}$ into blocks
$I_0,\ldots,I_m$, and put $H_i=\sum_{j\in I_i}1/j$. For observation
points $t_1,\ldots,t_d$ let
$\boldsymbol f(j)=(\log|1-e(jt_v)|)_{v=1}^d$. A root is recorded as
$-\infty$ in the corresponding coordinate. Heights lie in
$(\R\cup\{-\infty\})^d$, with the usual absorbing convention for
addition; a positive exponential tilt assigns zero mass to such an atom.
After selecting counts $q_i$, a height-marked block is
\begin{equation}\label{eq:measure-block}
 \boldsymbol B_{i,q_i}(z)
 =\frac1{q_i!}\left(\sum_{j\in I_i}\frac{z^j}{j}
                       \delta_{\boldsymbol f(j)}\right)^{*q_i}.
\end{equation}
Convolution adds heights and size; before the final coefficient is taken,
one may retain the whole increment vector and impose all cumulative-height
inequalities. This gives positive measures with size degree at most
$(\sum q_i)b\le kb$. Evaluation at $z=1$, after normalization, is exactly
$q_i$ independent harmonic samples in each block, with mass $1/(jH_i)$.
Thus, conditionally on all $q_i$ and on $l=k-\sum q_i$ long cycles,
\eqref{eq:order-transfer} compares the \emph{entire} actual short-cycle
configuration with these product harmonic measures, with relative error
$1+O(\eps_n)$. The original total-size constraint has been integrated
out by its analytic coefficient, not replaced by a total-variation
approximation whose error is larger than the rare event.

\subsection{The exact-cycle distribution of the block counts}
\begin{proposition}[Multinomial count reservoir]\label{prop:multinomial}
Let $k/L=\kappa\in[\kappa_-,\kappa_+]$ and let
$(Q_0^*,\ldots,Q_m^*,L_*^*)$ be multinomial with $k$ trials and
probabilities
\begin{equation}\label{eq:multi-probs}
 H_0/L,\ldots,H_m/L,T/L.
\end{equation}
They sum to one. Under \eqref{eq:conditional-law}, let $Q_i$ count cycles
in $I_i$ and $L_*$ count cycles larger than $b$. Uniformly on
\begin{equation}\label{eq:l-typical}
 \mathcal G=\{|l-\kappa T|\le T^{3/4}\},
\end{equation}
the actual probability of each count vector equals the multinomial
probability times $1+o(1)$, with a uniform error. Both distributions assign
probability tending to one to $\mathcal G$. These assertions hold for
arbitrarily many blocks.
\end{proposition}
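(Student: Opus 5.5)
The exact probability of a count vector comes from the generating function, and positivity then lets us compare it with the multinomial. Under \eqref{eq:conditional-law}, the probability that $Q_i=q_i$ for $i=0,\dots,m$ and $L_*=l$ (with $\sum_iq_i+l=k$) is
\[
 \frac{1}{a_{n,k}}\,[z^n]\Bigl\{[u^l]R_b(z,u)\prod_{i=0}^m\frac{1}{q_i!}\Bigl(\sum_{j\in I_i}\frac{z^j}{j}\Bigr)^{q_i}\Bigr\}.
\]
This is exactly the middle term of \eqref{eq:order-transfer} with trivial measures $\nu_d$ (point masses), of degree $D\le kb\le C_0Lb$. For $l$ in $\mathcal G$ we have $l/T\in[\kappa_-/2,2\kappa_+]$ for large $n$, so Theorem~\ref{thm:positive-transfer} applies. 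Evaluating the product at $z=1$ gives $\prod_iH_i^{q_i}/q_i!$, hence the actual probability equals
\[
 \frac{r_{n,l}^{(b)}}{a_{n,k}}\prod_i\frac{H_i^{q_i}}{q_i!}\,\{1+O(\eps_n)\},
\]
uniformly in $m$ and in the $q_i$, since the error of the transfer does not see the number of blocks.

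Next I compare this with the multinomial probability $k!\prod_i(H_i/L)^{q_i}/q_i!\cdot (T/L)^l/l!$. The factors $\prod_iH_i^{q_i}/q_i!$ cancel, leaving the ratio
\[
 \frac{r_{n,l}^{(b)}\,l!\,L^k}{a_{n,k}\,k!\,T^l}.
\]
Insert \eqref{eq:akn} and \eqref{eq:reservoir-marker} with $N=n$, where $T_n=T$. The factors $L^k/k!$, $1/n$ and $T^l/l!$ cancel, so the ratio is $\Gamma(k/L)/\Gamma(l/T)\cdot\{1+O(\ell^{-1}+L^{-1}+\sqrt T\,b/n)\}$. On $\mathcal G$ we have $|l/T-\kappa|\le T^{-1/4}\to0$, and $\Gamma$ is Lipschitz and bounded below on compact subsets of $(0,\infty)$, so $\Gamma(\kappa)/\Gamma(l/T)=1+O(T^{-1/4})$. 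This gives the uniform $1+o(1)$, with error $O(\ell^{-1/4})$.

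It remains to show both laws put mass tending to one on $\mathcal G$. Under the multinomial, $L_*^*$ is binomial with mean $kT/L=\kappa T$ and variance at most $\kappa T$. Chebyshev gives $\Pp(\mathcal G^c)\le\kappa_+T^{-1/2}\to0$, since $T\to\infty$. For the actual law, sum the ratio estimate over $\mathcal G$: the actual probability of $\mathcal G$ equals $(1+o(1))$ times the multinomial probability of $\mathcal G$, which is $1-o(1)$. Hence the actual probability of $\mathcal G^c$, being one minus that, is also $o(1)$. No estimate off $\mathcal G$ is needed.

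The main point needing care is that the transfer in Theorem~\ref{thm:positive-transfer} and the flatness bound \eqref{eq:flatness} were stated for $l/T$ in a fixed positive compact interval. So I must choose that interval before $n$, here $[\kappa_-/2,2\kappa_+]$, so that the constants, and hence $\eps_n$, are uniform over all $k$ and all $l\in\mathcal G$. Everything else is cancellation of explicit main terms.
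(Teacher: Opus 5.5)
Your proposal is correct and follows the paper's proof essentially step by step. You apply Theorem~\ref{thm:positive-transfer} with scalar coefficients to get the count-vector weight, divide by \eqref{eq:akn} and \eqref{eq:reservoir-marker} to obtain the ratio $\Gamma(\kappa)/\Gamma(l/T)\{1+o(1)\}$ uniformly on $\mathcal G$, and then sum this relative estimate over $\mathcal G$. The only cosmetic difference is that you control $L_*^*$ with Chebyshev where the paper uses a binomial Chernoff bound; either is enough, since only an $o(1)$ bound is needed.
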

\begin{proof}
Sum the exact configuration weights at fixed $(q_i,l)$. All short
configurations have size at most $kb\le\kappa_+Lb$. By
Theorem~\ref{thm:positive-transfer}, their total weight is
\[
 \{1+O(\eps_n)\}\,r_{n,l}^{(b)}\prod_i\frac{H_i^{q_i}}{q_i!}.
\]
Divide by \eqref{eq:akn} and apply \eqref{eq:reservoir-marker}. The ratio
to the multinomial mass is
\[
 \frac{\Gamma(\kappa)}{\Gamma(l/T)}
       \{1+O(T^{-1}+\eps_n)\}=1+o(1)
\]
uniformly on \eqref{eq:l-typical}. A binomial Chernoff bound gives
$\Pp(|L_*^*-\kappa T|>T^{3/4})\le2e^{-c\sqrt T}$ uniformly.
Summing the uniform relative estimate on this set proves that the actual
mass there also tends to one. No assertion about the mass ratio outside
this set is necessary.
\end{proof}

\section{Pointwise killed-convolution estimates}
\label{sec:kernel}
All constants in this and the following sections can be chosen uniformly
for $\kappa\in[\kappa_-,\kappa_+]$. Thus $s=s_\kappa$ ranges over a
fixed compact subset of $(0,\infty)$.

\subsection{The Mellin kernel and a bridge density lower bound}
Let $V=\log|1-e(U)|$ for uniform $U\in\T$. A change of variables gives
\begin{equation}\label{eq:Vdensity}
 f_V(x)=\frac{e^x}{\pi\sqrt{1-e^{2x}/4}}\ind_{\{x<\log2\}}.
\end{equation}
For $\beta>0$, define the positive tilted kernel
\begin{equation}\label{eq:tilt-kernel}
 f_\beta(x)=e^{\beta x-\lambda(\beta)}f_V(x),\qquad
 \mu_\beta=\lambda'(\beta).
\end{equation}
Its convolution transform is $A(\beta+it)/A(\beta)$.

\begin{lemma}[Uniform convolution bounds]\label{lem:walk}
For $\beta$ in a fixed compact subset of $(0,\infty)$ and all sufficiently
large $q$, let $g_{\beta,q}$ be the continuous version of the centered
$q$-fold convolution density. Then
\[
 \sup_x g_{\beta,q}(x)\le Cq^{-1/2},\qquad
 g_{\beta,q}(0)\ge cq^{-1/2}.
\]
If $Y_j$ have density $f_\beta(\cdot+\mu_\beta)$ independently, then
\begin{equation}\label{eq:walkmax}
 \Pp\left(\max_{j\le q}\left|\sum_{v\le j}Y_v\right|>w\right)
 \le2e^{-cw^2/q},\qquad 1\le w\le c q.
\end{equation}
\end{lemma}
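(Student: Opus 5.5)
The plan is to prove the three assertions from the characteristic function $\phi_\beta(t)=A(\beta+it)/A(\beta)$ of $f_\beta$. All the work reduces to two properties of $\phi_\beta$, uniform for $\beta$ in a compact $K\subset(0,\infty)$: a quadratic bound near the origin and integrable decay at infinity.

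Near the origin, $\lambda$ is analytic on $\Ree z>-1$, and $\lambda''(\beta)$ is bounded above and below on $K$, being the variance of the tilted law. Hence there are $\delta,c>0$ with
\[
|\phi_\beta(t)|\le e^{-ct^2}\qquad\text{for }|t|\le\delta,\ \beta\in K.
\]
For $|t|\ge\delta$, use the gamma ratio. Stirling's formula gives
\[
|A(\beta+it)|=\frac{|\Gamma(1+\beta+it)|}{|\Gamma(1+\beta/2+it/2)|^2}\asymp |t|^{\beta/2-1/2}\,e^{-\pi|t|/2+\pi|t|/2}.
\]
This is imprecise, so compute more carefully. One has $|\Gamma(x+iy)|\sim\sqrt{2\pi}\,|y|^{x-1/2}e^{-\pi|y|/2}$. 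The numerator is then $\asymp |t|^{\beta+1/2}e^{-\pi|t|/2}$. The denominator is $\asymp (|t|/2)^{\beta+1}e^{-\pi|t|/2}$. So $|\phi_\beta(t)|\asymp|t|^{-1/2}$ for large $|t|$, uniformly on $K$.

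This decay is not integrable, but $|\phi_\beta|^q$ is integrable once $q\ge3$. Separately, $f_\beta$ is a nonlattice absolutely continuous density, so $\sup_{\delta\le|t|\le T_0}|\phi_\beta(t)|\le\rho<1$. By continuity in $(\beta,t)$ and compactness, $\rho$ can be taken uniform over $K$, which handles all intermediate $t$.

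Fourier inversion then gives a continuous version of the density. Write
\[
g_{\beta,q}(x)=\frac1{2\pi}\int e^{-itx}\,e^{-iq\mu_\beta t}\phi_\beta(t)^q\dd t.
\]
The tail part, $|t|\ge\delta$, contributes at most $\rho^{q-3}\int|\phi_\beta|^3$ plus the large-$|t|$ region. The large-$|t|$ region is also exponentially small in $q$, since $|\phi_\beta|\le\rho$ there as well once $T_0$ is fixed; combined with the $|t|^{-1/2}$ decay this gives $O(\rho^{q})$. The part $|t|\le\delta$ is at most $\int e^{-cqt^2}\dd t\le Cq^{-1/2}$, which gives the uniform upper bound.

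For the lower bound at $0$, rescale $t=\tau/\sqrt q$ and apply the standard local CLT expansion. On $|\tau|\le q^{1/6}$, a uniform third-order Taylor bound for $\log\phi_\beta$ gives
\[
e^{-iq\mu_\beta t}\phi_\beta(t)^q=e^{-\lambda''(\beta)\tau^2/2}(1+o(1)),
\]
uniformly on $K$. The range $q^{1/6}\le|\tau|\le\delta\sqrt q$ is controlled by $e^{-c\tau^2}$. It follows that $g_{\beta,q}(0)=(2\pi q\lambda''(\beta))^{-1/2}(1+o(1))\ge cq^{-1/2}$.

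The maximal inequality \eqref{eq:walkmax} is a Chernoff–Doob argument. The centred $Y_v$ have moment generating function
\[
\E e^{\eta Y}=\exp\{\lambda(\beta+\eta)-\lambda(\beta)-\eta\lambda'(\beta)\}.
\]
This is finite for $\eta>-1-\beta$ and bounded by $e^{C\eta^2}$ for $|\eta|\le\eta_0$, uniformly on $K$, since $\lambda''$ is bounded on a neighbourhood of $K$; the right tail is trivial because $Y\le\log2-\mu_\beta$. Then $e^{\eta S_j}$ is a submartingale, and Doob's inequality gives
\[
\Pp(\max_{j\le q}S_j>w)\le e^{-\eta w+Cq\eta^2}.
\]
Choose $\eta=w/(2Cq)$, which is at most $\eta_0$ precisely when $w\le cq$; this yields the bound $e^{-cw^2/q}$. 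Applying the same argument to $-S_j$ and adding gives the factor $2$.

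The only delicate point is the uniform tail bound for $\phi_\beta$. The $|t|^{-1/2}$ decay forces at least three convolution factors before inversion, and the constant $\rho<1$ must be uniform over $K$. I would handle this with Stirling's formula with uniform error on vertical lines together with compactness.
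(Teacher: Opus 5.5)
Your proof is correct and follows essentially the same route as the paper. Both use a three-region Fourier inversion: a Gaussian bound near zero, a uniform $\rho<1$ on the intermediate range (from non-latticeness plus compactness), and the Stirling decay $|\phi_\beta(t)|\asymp|t|^{-1/2}$, which becomes integrable once $q\ge3$; this is followed by a local CLT at the origin and an exponential martingale maximal inequality for \eqref{eq:walkmax}. One minor slip that does not affect the conclusion: on $|\tau|\le q^{1/6}$ the cubic Taylor error $O(|\tau|^3/\sqrt q)$ is only $O(1)$, not $o(1)$, so either shrink the window to $q^{1/8}$ or use $|e^{w}-1|\le|w|e^{|w|}$ and integrate against the Gaussian.
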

\begin{proof}
The centered log moment generating function is at most $Ct^2$ for
small real $t$, uniformly in $\beta$. The exponential supermartingale,
stopped at its first crossing before time $q$, gives
$\Pp(\max S_j>w)\le\exp(-tw+Cqt^2)$.
Choose $t$ proportional to $w/q$, and repeat with $-S_j$, proving
\eqref{eq:walkmax}.

For $q\ge3$, the convolution integral is continuous: $f_\beta$ belongs
to $L^{3/2}(\R)$, so Young's inequality and continuity of translations
give boundedness and continuity of its third and subsequent convolutions.
For the density bounds, the centered characteristic function has modulus
at most $e^{-ct^2}$ near zero, uniformly on the tilt interval. On each
fixed annulus its modulus is at most $\rho<1$: equality would force
$e^{itV}$ to be constant, impossible for the positive density
\eqref{eq:Vdensity} on an interval. The gamma ratio and Stirling's formula
on vertical strips give the bound $C(1+|t|)^{-1/2}$ at infinity.
Split the Fourier inversion integral into these three regions. The
absolute integral is at most $Cq^{-1/2}$. At zero, putting $t=u/\sqrt q$
in the first region gives
$\sqrt q\,g_{\beta,q}(0)\to(2\pi\lambda''(\beta))^{-1/2}$ uniformly;
the other regions are exponentially small. This proves both bounds.
\end{proof}

For a path of $q$ centered increments write $S_j$ for its partial sums.
The density of $S_q$ at zero restricted to
$\max_{j\le q}|S_j|\le W\sqrt q$ is well defined by convolution and
Lebesgue integration. All restricted densities below use the iterated
convolution integral with the relevant increment density, substituting
the prescribed total minus the preceding increments for the last increment.
\begin{lemma}[A pointwise bridge kernel]\label{lem:bridge}
There are fixed $W,c>0$ such that this restricted density is at least
$cq^{-1/2}$, uniformly for the same tilts and all sufficiently large $q$.
Consequently, if $d/q=\lambda'(\beta)$ and $\beta$ is in the same
compact tilt interval, the $f_s$ convolution density at total increment
$d$, restricted to
\[
 \left|\sum_{v\le j}x_v-\frac jq d\right|\le W\sqrt q
 \quad(1\le j\le q),
\]
is at least
\begin{equation}\label{eq:bridge-saddle}
 cq^{-1/2}\exp\{q[\lambda(\beta)-\lambda(s)]-(\beta-s)d\}.
\end{equation}
In particular, if $|d-q\lambda'(s)|/q=o(1)$, the last exponential is at
least $\exp\{-C(d-q\lambda'(s))^2/q\}$.
\end{lemma}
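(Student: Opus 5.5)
We prove the restricted local bound first and then obtain \eqref{eq:bridge-saddle} by an exact change of tilt. Let $g^W_{\beta,q}(0)$ denote the density of $S_q$ at zero for $q$ centred $f_\beta(\cdot+\mu_\beta)$ increments, restricted to $\max_{j\le q}|S_j|\le W\sqrt q$. The complement of this restriction is a union of two events. Either the path leaves the tube during the first half, $j\le q/2$, or during the second half. By time reversal, a path pinned at zero that leaves the tube late is a reversed path that leaves it early. The reversed walk has the i.i.d.\ increment law $f_\beta(-\cdot+\mu_\beta)$, which satisfies the same bounds as in Lemma~\ref{lem:walk}, so both cases are handled the same way.

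For the first half, condition on the first $\lfloor q/2\rfloor$ increments. The unrestricted density at zero of the event $\{\max_{j\le q/2}|S_j|>W\sqrt q\}$ equals
\[
\E\bigl[\ind_{\{\max_{j\le q/2}|S_j|>W\sqrt q\}}\,g_{\beta,\lceil q/2\rceil}(-S_{\lfloor q/2\rfloor})\bigr]
\le Cq^{-1/2}\cdot2e^{-cW^2}.
\]
Here we used $\sup g_{\beta,\lceil q/2\rceil}\le Cq^{-1/2}$ and the maximal inequality \eqref{eq:walkmax} with $w=W\sqrt q$, which lies in the range $1\le w\le cq$ for large $q$. The second half gives the same bound after reversal. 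Subtracting both from $g_{\beta,q}(0)\ge cq^{-1/2}$ (Lemma~\ref{lem:walk}) and choosing $W$ large, we obtain $g^W_{\beta,q}(0)\ge (c/2)q^{-1/2}$ uniformly on the compact tilt interval.

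One technical point is that densities must be taken pointwise, not almost everywhere. The problem is that $f_\beta$ is unbounded near $\log2$. We avoid it by writing every restricted density as an iterated convolution integral in which the last increment is substituted, as the text prescribes. The decomposition above is then an identity of nonnegative integrals, and the conditioning step is an application of Tonelli's theorem. The supremum bound needs $\lceil q/2\rceil\ge3$, and that holds for large $q$.

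Now take general $d$ with $d/q=\lambda'(\beta)$. Since
\[
\prod_v f_s(x_v)=\prod_v f_\beta(x_v)\,\exp\Bigl\{q[\lambda(\beta)-\lambda(s)]-(\beta-s)\sum_v x_v\Bigr\},
\]
the exponential factor is constant, equal to the value in \eqref{eq:bridge-saddle}, on the hyperplane $\sum_v x_v=d$. Substituting $y_v=x_v-\mu_\beta$ with $\mu_\beta=d/q$ turns the tube $\bigl|\sum_{v\le j}x_v-jd/q\bigr|\le W\sqrt q$ into exactly $|S_j|\le W\sqrt q$ for the centred $\beta$-walk pinned at $S_q=0$. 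The restricted $f_s$ density at $d$ is therefore the exponential factor times $g^W_{\beta,q}(0)\ge cq^{-1/2}$, which proves \eqref{eq:bridge-saddle}.

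For the final claim, let $\Lambda(\beta)=q\lambda(\beta)-\beta d$, so that $\beta$ is a critical point with $\Lambda'(\beta)=0$. Taylor expansion at $\beta$ gives
\[
\Lambda(\beta)-\Lambda(s)=-\tfrac12 q\lambda''(\xi)(\beta-s)^2
\]
for some $\xi$ between $s$ and $\beta$. The hypothesis $|d-q\lambda'(s)|/q=o(1)$ and the bound $\lambda''\ge c>0$ on compacts give $|\beta-s|\le C|d/q-\lambda'(s)|$, and in particular $\beta$ stays in the compact interval. Hence the exponent is at least $-C(d-q\lambda'(s))^2/q$. The main obstacle is the pointwise (rather than a.e.) justification of the conditioning decomposition in the first step. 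The rest is the uniform input from Lemma~\ref{lem:walk}.
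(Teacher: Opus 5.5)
Your proposal is correct and follows the paper's proof essentially step for step. The steps are the same: the half-path split bounded by $\sup g_{\beta,q_2}$ times the maximal inequality; reversal for the second half (your time-reversed path with negated increments is equivalent to the paper's reversal of increment order, where reversed partial sums equal $-S_j$ on the zero-total hyperplane); subtraction from $g_{\beta,q}(0)\ge c_0q^{-1/2}$; the exact change of tilt on $\sum_v x_v=d$; and a Taylor bound at the critical point $\beta$. One small point to tighten is your remark that $\beta$ ``stays in the compact interval'': when $s$ sits at an endpoint of its range, $\beta$ can fall slightly outside it, so, as the paper notes, you should run the first part and the bounds $v_-\le\lambda''\le v_+$ on a slightly enlarged compact tilt interval.
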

\begin{proof}
Split $q$ into $q_1=\lfloor q/2\rfloor$ and $q_2=q-q_1$.
The density at zero contributed by paths having
$\max_{j\le q_1}|S_j|>W\sqrt q$ is at most
\[
 \sup_x g_{\beta,q_2}(x)\,
 \Pp(\max_{j\le q_1}|S_j|>W\sqrt q)
 \le Cq^{-1/2}e^{-cW^2}.
\]
For a path with total zero, deviations during its second half are
negative reversed partial sums of that half. Reversing the increments
gives the same bound for a violation in the second half. Subtract the
two bounds from $g_{\beta,q}(0)\ge c_0q^{-1/2}$ and choose $W$ large.
These are statements about densities, obtained by integrating the unused
half-convolution; no conditioning on a null event is used without a density.

On the hyperplane of total increment $d$, the density ratio of the
$f_s$ product to the $f_\beta$ product is the constant exponential in
\eqref{eq:bridge-saddle}. The bridge tube is unchanged by that ratio.
Finally $\lambda''$ is bounded above and below on the compact tilt
interval. The inverse function theorem gives
$|\beta-s|\le C|d-q\lambda'(s)|/q$, and Taylor expansion of the
exponent gives the final assertion.

\paragraph{The pointwise integral and its change of tilt.}
The definition used in this proof can be written explicitly. Given
increments $x_1,\ldots,x_{q-1}$, set
$x_q=d-\sum_{v<q}x_v$ and $S_j=\sum_{v\le j}x_v$. For a measurable
path restriction $E\subset\R^q$, put
\[
 K_{\beta,q}^{E}(d)=
 \int_{\R^{q-1}}\ind_E(x_1,\ldots,x_q)
              \prod_{v=1}^q f_\beta(x_v)
                   \dd x_1\cdots\dd x_{q-1}.
\]
This specifies a value at every $d$. With no restriction it agrees
with the continuous convolution density for $q\ge3$, by the
convolution identities and the regularity established in
Lemma~\ref{lem:walk}. Integrating in $d$ gives the probability of
$E$ under the product increment law: the substitution from the
last increment to $d$ is a translation with Jacobian one. The
same identity with an endpoint indicator gives disintegration
over any measurable endpoint set.

To apply the half-path argument, center the increments by
$\mu_\beta$. On the hyperplane of centered total zero, the
contribution of a first-half event $E_1$ is the integral of its
first-half product density times
$g_{\beta,q_2}(-S_{q_1})$. It is at most
$\sup g_{\beta,q_2}\,\Pp(E_1)$. For the second half use the reversal
$(x_1,\ldots,x_q)\mapsto(x_q,\ldots,x_1)$. On the free
$q-1$ coordinates its absolute Jacobian is one; its product density
is unchanged. Moreover a reversed partial sum of length $q-j$ is
$-S_j$ when the total is zero. A violation in either half is
therefore bounded by one of two identical maximal-inequality
estimates. Once $W$ is fixed, their sum is at most
$c_0/(2\sqrt q)$ by increasing $W$, and then increasing the lower
threshold for $q$ so that the allowed range in \eqref{eq:walkmax}
contains $W\sqrt q$. This gives a strictly positive bound at the
specified endpoint itself.

On the same parametrized hyperplane the change of tilt is exactly
\[
 \prod_{v=1}^q f_s(x_v)
 =\exp\{q[\lambda(\beta)-\lambda(s)]-(\beta-s)d\}
       \prod_{v=1}^q f_\beta(x_v).
\]
It remains true when one of the factors is zero. For the near-mean
assertion first enlarge the compact interval of $s$ slightly
within $(-1,\infty)$. On that larger interval let
$0<v_-\le\lambda''\le v_+$. For a fixed small enough $\delta>0$,
$|d/q-\lambda'(s)|\le\delta$ has a unique solution
$\lambda'(\beta)=d/q$ in the enlarged interval, and
$|\beta-s|\le v_-^{-1}|d/q-\lambda'(s)|$. Taylor's formula in the
form
\[
 \lambda(\beta)-\lambda(s)-(\beta-s)\lambda'(\beta)
 =-\int_s^\beta(t-s)\lambda''(t)\dd t
 \ge-\frac{v_+}{2}(\beta-s)^2
\]
now proves the asserted quadratic penalty, also when $\beta<s$.
The enlarged interval is essential when $s$ lies at one endpoint
of its original compact range.

\end{proof}

This pointwise estimate is stronger than a lower bound for one endpoint
window. It will be integrated over a chain of endpoint boxes. The
integration is what prevents payment of a narrow-window factor at every
scale.

\subsection{Fine scales and a regular count environment}
Take a fixed $A_0$ sufficiently large and fixed constants $g_*,r_*>0$
to be chosen later. Define
\begin{equation}\label{eq:fine-scales}
 h=A_0\ell,\quad r=r_*\ell,\quad a_*=\log b,\quad
 m=\left\lceil\frac{a_*-r}{h}\right\rceil,\quad
 \omega=\frac{a_*-r}{m},\quad a_i=r+i\omega.
\end{equation}
Then $h/2\le\omega\le h$ for large $n$. Let $I_0$ contain $j\le e^r$
and $I_i$ contain $e^{a_{i-1}}<j\le e^{a_i}$, $1\le i\le m$,
with integer endpoints interpreted by these inequalities. They partition
$\{1,\ldots,b\}$. For the middle counts $q_i$ put
\[
 S_i=\sum_{j=1}^i q_j,\qquad Q=S_m.
\]
Their harmonic block masses satisfy
\begin{equation}\label{eq:harmonic-widths}
 H_i=\omega+O(e^{-a_{i-1}}),\qquad
 \sum_{i=1}^m|H_i-\omega|=O(e^{-r}).
\end{equation}

Fix $D_0>0$, to be chosen in Lemma~\ref{lem:boxes}, and put
\begin{equation}\label{eq:base-scale}
 H_*^{\rm c}=\frac{r^2}{D_0\log\ell},\qquad
 v_0=\left\lceil H_*^{\rm c}/\omega\right\rceil.
\end{equation}
Group the middle blocks dyadically from both ends. Start with $M=m$
remaining blocks and $v=v_0$. While $M\ge6v$, remove one group of $v$
blocks from each end of the remaining interval, and immediately replace
$(M,v)$ by $(M-2v,2v)$. At termination $2v\le M<6v$. Divide the
remaining blocks into $J_{\rm c}=\lceil M/(2v)\rceil$ consecutive groups,
each containing $\lfloor M/J_{\rm c}\rfloor$ or $\lceil M/J_{\rm c}\rceil$
blocks, placing the larger groups first. Order all groups from left to
right. Set $i_0=0$, and let $i_j$ be the last fine index of group $j$,
so that $i_B=m$. Write
\[
 \mathsf H_j=a_{i_j}-a_{i_{j-1}},\qquad
 \mathsf Q_j=S_{i_j}-S_{i_{j-1}},\qquad 1\le j\le B.
\]
For large $n$ this deterministic construction gives
\begin{equation}\label{eq:dyadic}
 c\ell\le B\le C\ell,\quad
 \mathsf H_j\ge H_*^{\rm c},\quad
 \mathsf H_1,\mathsf H_B\in[H_*^{\rm c},2H_*^{\rm c}],\quad
 1/5\le\mathsf H_{j+1}/\mathsf H_j\le5.
\end{equation}
The bounds follow by summing a geometric progression and noting that
$m/v_0\asymp L\log\ell/\ell^2$ with fixed-constant factors.
Define
\begin{equation}\label{eq:environment-E}
 E_j=1+\mathsf H_j^{-1/2}
 \max_{i_{j-1}\le i\le i_j}
 |S_i-S_{i_{j-1}}-\kappa(a_i-a_{i_{j-1}})|.
\end{equation}
For fixed $\eta>0,K_E,C_E$, call the environment regular if
\begin{equation}\label{eq:regular}
 \frac\kappa2\omega\le q_i\le(\kappa+\eta)\omega\quad(1\le i\le m),
 \qquad \max_jE_j\le K_E\sqrt{\log\ell},\qquad
 \sum_jE_j^2\le C_E B.
\end{equation}
All constants here can be selected uniformly over the compact $\kappa$
interval.

\begin{lemma}[Regularity under exact cycle conditioning]\label{lem:regular}
For each fixed $\eta>0$, constants $A_0,K_E,C_E$ can be chosen, independently
of fixed $r_*,D_0$, such that under $\Pp_{n,k}$, with probability tending
to one uniformly in $k$, the environment is regular,
\[
 Q_0\le C r,\qquad |L_*-\kappa T|\le T^{3/4}.
\]
In particular $L_*\le C\ell$. The threshold for $n$ may depend on the
subsequently fixed $r_*,D_0$.
\end{lemma}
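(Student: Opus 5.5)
The plan is to reduce everything to the multinomial reference law of Proposition~\ref{prop:multinomial}, applied with the fine blocks $I_0,\ldots,I_m$ of \eqref{eq:fine-scales}. Since that proposition holds for arbitrarily many blocks, and the actual count-vector law equals the multinomial law times $1+o(1)$ uniformly on $\mathcal G$, it is enough to show two things. First, under the multinomial with $k$ trials and probabilities $H_i/L$, $T/L$, each failure event has probability $o(1)$ uniformly in $k$. Second, $\mathcal G$ has probability tending to one under both laws, which the proposition already gives. The relative comparison on $\mathcal G$ then transfers every bound: the actual probability of a bad event is at most $(1+o(1))$ times the multinomial probability of that event intersected with $\mathcal G$, plus the actual mass of $\mathcal G^c$. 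The bound $|L_*-\kappa T|\le T^{3/4}$ is exactly $\mathcal G$. Since $T=4\ell+O(1)$, it implies $L_*\le C\ell$.

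Next I would Poissonize. A multinomial vector with $k$ trials is an independent Poisson vector with means $kH_i/L=\kappa H_i$ (and $\kappa T$), conditioned on the total being $k$. The total is Poisson$(\kappa L)$, and it takes the value $k=\kappa L$ with probability $\asymp L^{-1/2}$. So conditioning inflates probabilities by at most $C\sqrt L$. This is too costly for events of polynomially small probability, so I would use this crude route only for events whose Poisson probability is $o(L^{-1/2})$. For the remaining events I would argue directly.

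Consider the block bounds and $Q_0$ first. Under the Poisson model, $q_i$ is Poisson with mean $\kappa H_i=\kappa\omega(1+O(e^{-r}))$, and $\omega\ge A_0\ell/2$. By Chernoff, $\Pp(q_i<\kappa\omega/2)$ and $\Pp(q_i>(\kappa+\eta)\omega)$ are at most $e^{-c_\eta\kappa_-\omega}\le L^{-c_\eta\kappa_-A_0/2}$. There are $m\le L$ blocks, so choosing $A_0$ large in terms of $\eta$ makes the union bound $o(L^{-1/2})$. Likewise $Q_0$ is Poisson with mean $\kappa H_0\le C r$, and a Chernoff bound at level $Cr$ for large $C$ gives $e^{-c r}$. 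That is not $o(L^{-1/2})$, since $r=r_*\ell$. Here I would instead use the exact multinomial marginal: $Q_0$ is Binomial$(k,H_0/L)$, and Chernoff applies to it directly with no conditioning cost. The same remark handles any event depending on a single block or a fixed set of coordinates.

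The main obstacle is the environment conditions on $E_j$, since these involve the full path of partial sums. Under the exact multinomial, the vector $(S_i)$ is the partial-sum process of the cell counts. Write $N_i=S_i-\kappa a_i$. Conditionally on the total, the increments within group $j$ are a hypergeometric-type sample, which is negatively associated, so exponential martingale (Bernstein) bounds hold with variance proxy $\kappa\mathsf H_j$. These give $\Pp(E_j>1+x)\le 2e^{-cx^2}$ for $x$ up to order $\sqrt{\mathsf H_j}$. The deterministic drift error $\kappa(a_i-\sum_{i'\le i}H_{i'})$ is $O(e^{-r})$ by \eqref{eq:harmonic-widths}. The union bound over $B\le C\ell$ groups at level $x=K_E\sqrt{\log\ell}$ gives total failure probability $C\ell\cdot\ell^{-cK_E^2}=o(1)$, which settles the maximum condition. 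For $\sum_jE_j^2\le C_EB$, the group increments over disjoint groups are negatively associated. So $\E\exp(\tfrac c2\sum_jE_j^2)\le\prod_j\E e^{cE_j^2/2}\le e^{C'B}$, and Markov gives failure probability $e^{-(cC_E/2-C')B}\to0$ because $B\ge c\ell$.

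For this Markov step I would justify negative association of the multinomial cells and the monotonicity of $E_j$ in blocks. If that proves delicate, the fallback is the Poissonization route: the Poisson failure probability is $e^{-c\ell}$ for $C_E$ large, and that beats the $\sqrt L$ cost only if $C_E$ is chosen large enough that $c\,C_E\,c_B\,\ell\ge\tfrac12L$. That fails, so the fallback is instead to condition on the total through the local CLT only on the last coordinate $L_*$: fix $L_*=l\in\mathcal G$. The middle counts are then multinomial with $k-l$ trials over blocks of total mass $L-T$, and the Markov argument runs for each $l$. None of these constants depend on $r_*$ or $D_0$; those enter only through how large $n$ must be for the group sizes $\mathsf H_j\ge H_*^{\rm c}$ to exceed the Chernoff thresholds.
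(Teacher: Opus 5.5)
You reduce to the multinomial vector through Proposition~\ref{prop:multinomial}, bound the fine counts and $Q_0$ by Chernoff (the latter via its binomial marginal), and bound $\max_jE_j$ by a union bound. These steps are sound and parallel the paper. The bound on $\max_jE_j$ needs a maximal inequality for negatively associated sequences, such as Shao's comparison theorem, because with a fixed number of trials the centred prefix counts are not a martingale.

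The gap is in the energy bound $\sum_jE_j^2\le C_EB$. The step $\prod_j\E e^{cE_j^2/2}\le e^{C'B}$ is false. Your Bernstein bound $\Pp(E_j>1+x)\le2e^{-cx^2}$ holds only for $x\lesssim\sqrt{\mathsf H_j}$; beyond that range the count tails are merely Poissonian. For instance, $\mathsf Q_1$ is binomial with $k$ trials and success probability $p_1\asymp\mathsf H_1/L$, and on $\{\mathsf Q_1=k\}$ one has $E_1\gtrsim k/\sqrt{\mathsf H_1}$. Hence
\[
 \E e^{cE_1^2/2}\ge\exp\Bigl\{k\Bigl(\frac{c'k}{\mathsf H_1}-\log\frac{L}{\mathsf H_1}\Bigr)\Bigr\}.
\]
Since $\mathsf H_1\asymp\ell^2/\log\ell$, the exponent is of order $L^2\log\ell/\ell^2$, and for independent Poisson counts the expectation is infinite. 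Your Markov step therefore yields nothing.

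There is a second, separate problem. Negative association gives $\E\prod_jf_j\le\prod_j\E f_j$ only for nonnegative functions of disjoint groups that are all nondecreasing, or all nonincreasing. $E_j$ is built from an absolute deviation, so it is neither, and the ``monotonicity of $E_j$ in blocks'' you plan to justify does not hold.

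Your fallbacks do not close this gap. Conditioning on $L_*=l$ leaves the middle counts multinomial with a fixed number of trials, so the dependence across groups is still there. You discarded the Poissonization route because of an arithmetic slip: $\sqrt L=e^{\ell/2}$, so a Poisson failure probability $e^{-cC_EB}\le L^{-cC_Ec_B}$ does beat the de-Poissonization factor $C\sqrt L$ once $C_E$ is large.

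A correct version along your lines works as follows.
\begin{itemize}
\item Replace $E_j$ by $\min(E_j,c\sqrt{\mathsf H_j})$. This has a sub-Gaussian tail on its whole range.
\item It agrees with $E_j$ outside an event of probability at most $e^{-c\mathsf H_j}\le e^{-c\ell^2/\log\ell}$, which is small enough to survive the $\sqrt L$ factor.
\item Then either use genuinely independent Poisson groups and accept the $C\sqrt L$ loss, or use negative association after splitting into upper and lower deviations, each of which is monotone.
\end{itemize}

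The paper avoids both exponential moments and the $\sqrt L$ loss. It proves the maximum and energy bounds for independent Poisson counts, using a union bound for the maximum and Doob's fourth-moment inequality, independence across groups and Chebyshev for the energy. It then transfers these bounds to the multinomial through a single coupling: $N\sim\Poi(k)$ together with one iid label sequence. In that coupling $|E_j-\widetilde E_j|\le D_j/\sqrt{\mathsf H_j}$ and $\E\sum_jD_j^2/\mathsf H_j=O(1)$, so the two environments differ by only $O_{\Pp}(1)$.
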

\begin{proof}
By Proposition~\ref{prop:multinomial}, it suffices to prove these assertions
for the multinomial vector \eqref{eq:multi-probs}. Each $q_i$ is binomial
with mean $\kappa H_i$. A Chernoff bound and \eqref{eq:harmonic-widths},
followed by a union bound over at most $L$ blocks, establish the first
part of \eqref{eq:regular} when $A_0$ is large. The same bound gives
$Q_0\le Cr$ with probability tending to one. The assertion about $L_*$
was already proved.

Here are details for the multiscale energy; conditioning a fixed number
of trials must not be ignored. First form independent Poisson category
counts with the same means $k p_i$. On a coarse group, the centered
successive counts are independent-increment martingales. Exponential
martingales give
\[
 \Pp(E_j>x)\le C e^{-cx^2}
 \quad(2\le x\le c\sqrt{\mathsf H_j}),
\]
uniformly, with the negligible deterministic discrepancies in
\eqref{eq:harmonic-widths} absorbed. The union bound over $B=O(\ell)$
yields the desired maximum bound for a sufficiently large constant.
Doob's fourth-moment inequality and
$\E(\Poi(\mu)-\mu)^4=\mu+3\mu^2$ give
$\sup_j\E E_j^4\le C$. Different coarse groups are independent.
Hence Chebyshev's inequality shows $\sum E_j^2\le C_E B$ with failure
probability $O(B^{-1})$ when $C_E$ is sufficiently large.

To pass this elementary count estimate to a fixed number of trials,
take an independent $N\sim\Poi(k)$ and one sequence of iid category
labels of probabilities $(p_i)$. The first $N$ labels give the independent
Poisson counts, and the first $k$ give the multinomial counts. Conditional
on $D=|N-k|$, the differing labels are $D$ iid labels. If $D_j$ is their
number in coarse group $j$, then
\[
 \E\left[\sum_j\frac{D_j^2}{\mathsf H_j}\,\middle|\,D\right]
 \le C\left(\frac{DB}{L}+\frac{D^2}{L}\right).
\]
Indeed that category has probability
$\mathsf H_j/L+O(e^{-r}/L)$; use its binomial second moment and
\eqref{eq:dyadic}. Since $\E D\le\sqrt k$ and $\E D^2=k$, the last
sum is $O_{\Pp}(1)$. At every prefix of a coarse group the count change
is at most $D_j$. Thus the squared Euclidean distance between the two
vectors of $E_j$'s is $O_{\Pp}(1)$. Enlarging $K_E,C_E$ transfers their
maximum and energy bounds, since $\sqrt{\log\ell}\to\infty$ and
$B\to\infty$. This use of Poisson variables is only a proof of a
multinomial count inequality. There is no comparison of spectral fields
or rare high-point probabilities at this step.

\paragraph{Quantitative effect of fixing the number of trials.}
The preceding coupling controls all prefixes simultaneously.
Write $p_j^{\rm c}$ for the probability that one category label
lies in coarse group $j$. For large $n$,
$p_j^{\rm c}\le C\mathsf H_j/L$ and
$\sum_jp_j^{\rm c}\le1$. Conditional on $D$, each $D_j$ is
binomial with parameters $D,p_j^{\rm c}$, even though the $D_j$
are not independent. Hence, without using independence among them,
\[
 \begin{split}
 \E\left[\sum_j\frac{D_j^2}{\mathsf H_j}\,\middle|\,D\right]
 &\le D\sum_j\frac{p_j^{\rm c}}{\mathsf H_j}
       +D^2\sum_j\frac{(p_j^{\rm c})^2}{\mathsf H_j}\\
 &\le CDB/L+CD^2/L.
 \end{split}
\]
Since $N$ is independent of the entire label sequence, the labels
between indices $\min(N,k)+1$ and $\max(N,k)$ remain iid after
conditioning on $N$, hence after conditioning on $D$ for this
calculation. Also $\E D^2=\Var N=k$ and
$\E D\le\sqrt{k}$. The expectation of this energy is bounded
uniformly because $B=O(\ell)$ and $k\asymp L$.

Let $\widetilde E_j$ and $E_j$ be the Poisson and multinomial
environments in this one coupling, with the same deterministic
centering. The elementary inequality
$|\max_i|u_i|-\max_i|v_i||\le\max_i|u_i-v_i|$ gives
\[
 |E_j-\widetilde E_j|\le D_j/\sqrt{\mathsf H_j},
 \qquad
 \sum_j E_j^2\le2\sum_j\widetilde E_j^2
                         +2\sum_jD_j^2/\mathsf H_j.
\]
Markov's inequality makes the second sum $o_{\Pp}(B)$, and its
square root $o_{\Pp}(\sqrt{\log\ell})$. Enlarging the two fixed
constants in the Poisson bounds therefore yields both parts of
\eqref{eq:regular} for the multinomial vector. No conditioning
estimate is divided by $\Pp(N=k)$, which would lose a factor
of order $\sqrt L$.

\end{proof}

\subsection{Dyadic boxes instead of an assumed barrier theorem}
Let $G=g_*\ell$, $s=s_\kappa$, and $\lambda=\lambda(s)$. Define
\begin{equation}\label{eq:frontier}
 F_i=\frac{a_i+\lambda S_i}{s},\qquad
 y=\frac{a_*-r+\lambda Q}{s},\qquad B_i=F_i-G.
\end{equation}
In each fine block convolve $q_i$ copies of $f_s$ and one independent
noise of modulus at most $L^{-10}$. Write $W_i$ for the cumulative height
through block $i$. Let
\begin{equation}\label{eq:path-weight}
 \mathcal A=\{W_i\le B_i\ (1\le i\le m),\quad y\le W_m\le y+1\},
 \qquad
 p_L=\E_s[e^{-s(W_m-y)}\ind_{\mathcal A}].
\end{equation}
This expectation is simply the integral of a product of the positive
convolution kernels, with the stated projections.

\begin{lemma}[Polynomial lower mass for the constrained kernel]
\label{lem:boxes}
There exist $D_0,C_*<\infty$, depending on the compact $\kappa$ interval
and on $A_0,K_E,C_E$, with the following property. For all fixed $g_*,r_*$
satisfying
\begin{equation}\label{eq:rg}
 r_*\ge64 s_{\max}g_*+1,\qquad
 s_{\max}=\max_{\kappa\in[\kappa_-,\kappa_+]}s_\kappa,
\end{equation}
and all regular environments, uniformly for large $n$,
\begin{equation}\label{eq:polynomial-cost}
 p_L\ge L^{-C_*}.
\end{equation}
The constants $D_0,C_*$ do not depend on the fixed $g_*,r_*$. The
threshold for $n$ is allowed to depend on them.
\end{lemma}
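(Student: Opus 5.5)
The plan is to bound $p_L$ from below by restricting the path to one explicit tube built from the coarse groups of \eqref{eq:dyadic}. The path is pinned into an endpoint box at the end of each group, and Lemma~\ref{lem:bridge} is applied within each group. Throughout I drop the weight $e^{-s(W_m-y)}$, which is at least $e^{-s}$ on $\mathcal A$. The independent noises of modulus $L^{-10}$ are absorbed by shrinking every box by $L^{-9}$. I then regard group $j$ as one convolution of $\mathsf Q_j$ copies of $f_s$. The barrier $W_i\le B_i$ only has to be checked at fine indices, and the tube will control every partial sum anyway.

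\emph{Reference line.} I put $c_i=F_i-r/s=(a_i-r+\lambda S_i)/s$, so that $c_0=0$ and $c_m=y$. This line lies at the fixed distance $r/s-G\ge (r_*/s_{\max}-g_*)\ell\ge 63g_*\ell+\ell/s_{\max}$ below the barrier, by \eqref{eq:rg}. The free mean of the walk is $\lambda'(s)S_i$. Using the critical equation \eqref{eq:critical}, i.e.\ $s\lambda'(s)-\lambda=1/\kappa$, I get
\[
 \lambda'(s)S_i-c_i=\frac{S_i-\kappa(a_i-r)}{\kappa s}.
\]
The increment of this quantity across group $j$ is at most $C E_j\sqrt{\mathsf H_j}$ by \eqref{eq:environment-E}, and inside the group it also stays within $C E_j\sqrt{\mathsf H_j}$ of the linear interpolation.

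\emph{Curved target and boxes.} Near the two ends the tube width $\sqrt{\mathsf H_j}$ is small, but in the middle it is far larger than $\ell$, so the target must sag below $c$. I take the target at the end of group $j$ to be $\tau_j=c_{i_j}-\phi_j$, with $\phi_0=\phi_B=0$ and $\phi_j=A\sqrt{\mathsf H_j}\,\max(1,E_j,E_{j+1})$ for $0<j<B$, where $A$ is a large fixed constant. The box $\mathcal B_j$ is the interval of width $\sqrt{\mathsf H_j}$ just below $\tau_j$, except that $\mathcal B_B=[y,y+1]$. In group $j$, going from a point of $\mathcal B_{j-1}$ to a point of $\mathcal B_j$, I apply Lemma~\ref{lem:bridge} with $q=\mathsf Q_j\asymp\mathsf H_j$; this comparability uses the regularity bounds \eqref{eq:regular} on the $q_i$. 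The required total increment deviates from $\mathsf Q_j\lambda'(s)$ by $O\bigl((E_j+E_{j-1}+E_{j+1}+A)\sqrt{\mathsf H_j}\bigr)$, using the neighbour ratio bound in \eqref{eq:dyadic}. So the bridge density is at least $c\,\mathsf H_j^{-1/2}\exp\{-C(1+E_{j-1}^2+E_j^2+E_{j+1}^2)\}$, uniformly over the two endpoints. The bridge tube has width $W\sqrt{\mathsf H_j}$. The count fluctuation inside the group adds $C E_j\sqrt{\mathsf H_j}$, and interpolating $\phi$ between its end values costs nothing extra. Choosing $A\ge C(W+1)$, the tube stays below $c_i$ in the interior groups.

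At the two end groups there is no sag, and I use instead $\sqrt{\mathsf H_1}\,E_1\le \sqrt{2H_*^{\rm c}}\,K_E\sqrt{\log\ell}=\sqrt{2/D_0}\,K_E\,r$. I therefore fix $D_0$ large, depending only on $K_E$, $W$ and $s_{\max}$, so that this quantity is at most $r/(4s_{\max})$. By \eqref{eq:rg} this is less than half the gap $r/s-G$. In the interior I need $\phi_j+(W+CE_j)\sqrt{\mathsf H_j}\le$ the gap plus $\phi$-room. This is where the dyadic growth matters: since $\mathsf H_j\ge H_*^{\rm c}$ and the sag is measured from $c$ rather than from the barrier, the barrier is automatically respected once the tube is below $c_i+ (r/s-G)$.

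\emph{Assembling and main obstacle.} Integrating the product of the pointwise bridge bounds over the intermediate boxes, each factor $\mathsf H_j^{-1/2}$ cancels against the width $\sqrt{\mathsf H_{j}}$ or $\sqrt{\mathsf H_{j-1}}$ of an adjacent box, up to the factor 5 of \eqref{eq:dyadic}. Only the final box $[y,y+1]$ is left uncompensated, and it contributes $\mathsf H_B^{-1/2}\ge r^{-1}=L^{-o(1)}$. Hence
\[
 p_L\ge e^{-s}\,c^{B}\exp\Bigl\{-C\sum_j(1+E_j^2)\Bigr\}\,L^{-1}\ge e^{-C(1+C_E)B}L^{-1}\ge L^{-C_*},
\]
using $\sum_jE_j^2\le C_EB$ and $B\le C\ell$. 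The point is that the quadratic penalties add over groups rather than being paid as $B\max_jE_j^2$. The step I expect to be hardest is the geometric bookkeeping in the interior groups. There I must check that the sagged tube, including the within-group count fluctuation controlled only by $E_j$ and the bridge width $W\sqrt{\mathsf H_j}$, stays below $B_i$ at every fine index. I must also check that $A$, and then $D_0$ and $C_*$, can be fixed independently of $g_*$ and $r_*$. I would handle this by choosing $A$ first from $W$ and the factor 5, then $D_0$ from the end-group inequality, and by letting only the threshold for $n$ depend on $g_*,r_*$.
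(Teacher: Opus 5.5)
Your proposal is correct and is essentially the paper's argument. Both use a chain of endpoint boxes of width $\sqrt{\mathsf H_j}$ over the dyadic groups, apply the pointwise bridge bound of Lemma~\ref{lem:bridge} to each pair of box points (with the chord-versus-frontier error controlled by $E_j\sqrt{\mathsf H_j}$), and integrate over the intermediate boxes so that only $\mathsf H_B^{-1/2}$ and a cost $\exp\{-C\sum_j(1+E_j^2)\}$ survive. The one difference is where the targets sit: you measure them from the line $F_i-r/s$ with sag $\phi_j$, whereas the paper places interior targets at gap $\gamma_j=4G+B_0(\sqrt{\mathsf H_j}E_j+\sqrt{\mathsf H_{j+1}}E_{j+1})$ below $F_{i_j}$, which forces an extra boundary drift term $r/\sqrt{\mathsf H_j}$ in $U_j$ for $j=1,B$ and an energy contribution $D_0\log\ell$ that your choice avoids.
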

\begin{proof}
We exhibit a lower bound for a product of killed kernels; no adaptive
random-environment measure is an input. Temporarily omit the noises.
Choose a large constant $B_0$. At a nonterminal coarse endpoint define
\[
 \gamma_j=4G+B_0(\sqrt{\mathsf H_j}E_j+
                         \sqrt{\mathsf H_{j+1}}E_{j+1}),
 \qquad z_j=F_{i_j}-\gamma_j \quad(1\le j<B).
\]
At the two ends take
\[
 z_0=0,\quad\gamma_0=r/s;\qquad
 z_B=y+\tfrac12,\quad\gamma_B=r/s-\tfrac12.
\]
Let the intermediate endpoint boxes be
$\mathcal I_j=[z_j-\sqrt{\mathsf H_j},z_j+\sqrt{\mathsf H_j}]$,
$1\le j<B$, and let $\mathcal I_B=[z_B-1/10,z_B+1/10]$.
The initial box is the singleton $\{0\}$.

Choose $B_0$ to dominate the bridge width in Lemma~\ref{lem:bridge},
the neighboring-width ratio five, and the constants in the next
interpolation calculation. Choose $D_0$ so large that
\[
 C B_0 K_E\sqrt{2/D_0}<1/(8s_{\max}),
\]
where $C$ is a fixed constant large enough for those same bounds.
By \eqref{eq:rg}, both endpoint gaps then dominate $4G$ plus the required
multiple of $E_1\sqrt{\mathsf H_1}$ or
$E_B\sqrt{\mathsf H_B}$, for large $n$. Indeed these products are at
most $K_E r\sqrt{2/D_0}$, whereas $r/s\ge r/s_{\max}$.

Consider group $j$ and any $x\in\mathcal I_{j-1}$,
$x'\in\mathcal I_j$. Put $d=x'-x$ and $q=\mathsf Q_j$.
The critical identity $s\lambda'(s)-\lambda=1/\kappa$ gives
\begin{equation}\label{eq:box-drift}
 d-q\lambda'(s)
 =\frac{\mathsf H_j-q/\kappa}{s}
   -(\gamma_j-\gamma_{j-1})+(e_j-e_{j-1}),
\end{equation}
where $e_j=x'-z_j$ and $e_{j-1}=x-z_{j-1}$.
Use $E_0=E_{B+1}=1$ when needed. Uniformly over these boxes,
\begin{equation}\label{eq:Ub}
 |d-q\lambda'(s)|\le C\sqrt{\mathsf H_j}\,U_j,
 \quad
 U_j=1+E_{j-1}+E_j+E_{j+1}
       +\frac r{\sqrt{\mathsf H_j}}\ind_{\{j=1\text{ or }j=B\}}.
\end{equation}
For interior groups the $4G$ terms cancel exactly. At the boundary they
are bounded by a constant times $r$. Moreover $q\asymp\mathsf H_j$,
$\max_j U_j/\sqrt{\mathsf H_j}=O(\log\ell/r)=o(1)$, and
\begin{equation}\label{eq:box-energy}
 \sum_{j=1}^B U_j^2
 \le C\left(B+\sum_jE_j^2+\frac{r^2}{H_*^{\rm c}}\right)
 \le C\ell.
\end{equation}
The last constant is independent of $g_*,r_*$, since
$r^2/H_*^{\rm c}=D_0\log\ell$.

We verify the path restriction before using its density. At a fine
endpoint $i$ in this group, set
$f=(S_i-S_{i_{j-1}})/q$. The straight line in sample count joining $x$
to $x'$ has gap below $F_i$ equal to
\begin{equation}\label{eq:chord-gap}
 (1-f)(\gamma_{j-1}-e_{j-1})+f(\gamma_j-e_j)
       +\frac{a_i-a_{i_{j-1}}-f\mathsf H_j}{s}.
\end{equation}
If $D_i=S_i-S_{i_{j-1}}-\kappa(a_i-a_{i_{j-1}})$, the numerator
in its final term is
\[
 \frac{(a_i-a_{i_{j-1}})D_{i_j}-\mathsf H_jD_i}{q}.
\]
It has modulus at most $CE_j\sqrt{\mathsf H_j}$.
Our choices of gaps, $B_0,D_0$ make \eqref{eq:chord-gap} exceed
$2G+W\sqrt q$ at every such endpoint. Hence a bridge lying within
$W\sqrt q$ of that straight line stays below $F_i-2G$ at all fine
endpoints.

For each fixed pair $x,x'$, solve the deterministic saddle equation
$\lambda'(\beta)=d/q$. It has a solution near $s$, uniformly by
\eqref{eq:Ub}, and the compactness needed in Lemma~\ref{lem:bridge}
holds. That lemma gives the following \emph{pointwise} lower bound for
the group convolution density, killed at any violated fine barrier:
\begin{equation}\label{eq:killed-minorization}
 K_j(x,x')\ge c\mathsf H_j^{-1/2}\exp(-CU_j^2),
 \qquad x\in\mathcal I_{j-1},\quad x'\in\mathcal I_j.
\end{equation}
This step chooses a saddle for each density evaluation; it does not
assert that the random increments follow a preassigned adaptive law.

Multiply \eqref{eq:killed-minorization} and integrate all intermediate
endpoints over their boxes. Their widths $2\sqrt{\mathsf H_j}$ cancel
the respective density factors. Only the last, fixed-width endpoint
window leaves a factor $\mathsf H_B^{-1/2}$. Thus the mass of paths
satisfying the stronger barriers and ending in $\mathcal I_B$ is at least
\[
 c^B\mathsf H_B^{-1/2}\exp\left(-C\sum_jU_j^2\right)\ge L^{-C_*}.
\]
Indeed $B=O(\ell)$, \eqref{eq:box-energy} holds, and
$\log\mathsf H_B=O(\log\ell)+O(\log r_*)=o(\ell)$ for each fixed $r_*$.
Finally all noises together change any height by at most $mL^{-10}\le
L^{-9}$. The strict barrier margins and the endpoint interval
$[y+2/5,y+3/5]$ absorb them. On \eqref{eq:path-weight} the terminal
weight is at least $e^{-s}$. Increasing $C_*$ proves
\eqref{eq:polynomial-cost}.

\paragraph{Endpoint costs and the order of constants.}
We give the bookkeeping behind the two cancellations just used.
From regularity,
$|q-\kappa\mathsf H_j|\le E_j\sqrt{\mathsf H_j}$.
Furthermore
$\max_j E_j/\sqrt{\mathsf H_j}
 \le K_E\sqrt{D_0}\log\ell/r=o(1)$.
Thus $q/\mathsf H_j$ lies, for all large $n$, in a positive compact
interval determined by the original $\kappa$ interval. This bound
comes from the coarse environment energy; it does not require
any further restriction on the fine-count upper tolerance.

The interior gap differences in \eqref{eq:box-drift} involve at
most three neighboring $E$'s. Adjacent square-root widths have
ratios between $1/\sqrt5$ and $\sqrt5$. The inequality
$(t_1+\cdots+t_5)^2\le5\sum_{\nu=1}^5t_\nu^2$, applied to
\eqref{eq:Ub} and then summed in $j$, consequently gives
\[
 \sum_jU_j^2
 \le C\left(B+\sum_jE_j^2+
                     r^2/\mathsf H_1+r^2/\mathsf H_B\right)
 \le C(1+C_E)B+2CD_0\log\ell.
\]
Every interior $E_j^2$ is counted at most three times. Also
$U_j/\sqrt{\mathsf H_j}=o(1)$ uniformly, including the two
boundary terms $r/\mathsf H_j\le D_0\log\ell/r$. These bounds
justify use of the fixed near-mean neighborhood from
Lemma~\ref{lem:bridge} for every pair of box endpoints.

For the path inclusion, write
$\tau=a_i-a_{i_{j-1}}$ and $D_i=S_i-S_{i_{j-1}}-\kappa\tau$.
Since $q=\kappa\mathsf H_j+D_{i_j}$, elementary cancellation gives
\[
 \tau-\frac{\kappa\tau+D_i}{q}\mathsf H_j
   =\frac{\tau D_{i_j}-\mathsf H_jD_i}{q}.
\]
Here $0\le\tau\le\mathsf H_j$ and both deviations are at most
$E_j\sqrt{\mathsf H_j}$ in modulus. This is the error charged
against the two adjacent endpoint gaps in \eqref{eq:chord-gap}.
At the initial and final endpoints the additional gap $r/s$
supplies the same margin by the choice of $D_0$ and \eqref{eq:rg}.
The final endpoint is centered at $y+1/2$, so its gap is
$r/s-1/2$, exactly as specified above.

Tonelli's theorem and the independence of disjoint groups identify
the constrained endpoint-chain mass with the iterated integral
of their killed kernels. The quantitative lower bound for that
integral is
\[
 \left(\prod_{j=1}^B c\mathsf H_j^{-1/2}e^{-CU_j^2}\right)
 \left(\prod_{j=1}^{B-1}2\sqrt{\mathsf H_j}\right)\frac15
 =\frac{2^{B-1}c^B}{5}\,
       \mathsf H_B^{-1/2}e^{-C\sum_jU_j^2}.
\]
Taking logarithms leaves a loss at most
$C'B+C''\sum_jU_j^2+\tfrac12\log\mathsf H_B+C'''$.
The coefficient of $\ell$ in this bound can be fixed before
$g_*,r_*$: $B\le C\ell$ with a fixed coefficient, the energy is
bounded as above, and
$\log\mathsf H_B=O(\log\ell)+O(|\log r_*|+|\log D_0|)$.
Only the threshold for $n$ changes when these fixed parameters
change. Finally one can first fix any noise vector satisfying
the coordinate bound, apply the deterministic margin argument,
and then integrate that vector. Thus this lower bound even
allows dependence among the noise coordinates; the noise vector
is still sampled independently of the increment path.

\end{proof}

\begin{remark}[Why the exponent is logarithmic]
A separate fixed-width endpoint condition in every coarse group would
produce $\prod_j\mathsf H_j^{-1/2}$ and a much larger logarithmic cost.
The pointwise kernel estimate permits boxes of width
$\sqrt{\mathsf H_j}$ and cancels all intermediate factors by integration.
Only $\sum E_j^2$, not $B\max E_j^2$, is charged. These two facts are
essential to the precision in Theorem~\ref{thm:main}.
\end{remark}

\section{Fourier remainders for the marked height kernels}
\label{sec:fourier}
These estimates concern only the explicitly evaluated kernels
\eqref{eq:measure-block}. The coefficient-reservoir theorem is what
allows them to be used under the original size and cycle constraints.

\subsection{Fourier regularity and arithmetic separation}
For $\Ree z>0$ put $\phi_z(t)=|1-e(t)|^z$, set its value at an integer
to zero, and use the real logarithm of $|1-e(t)|$ off the integers.
Use the convention $\widehat\phi_z(j)=\int_\T\phi_z(t)e(-jt)\dd t$.
For $s>0$ its Fourier coefficients are
\begin{equation}\label{eq:fourier-real}
 \widehat\phi_s(j)
 =\frac{(-1)^j\Gamma(1+s)}
 {\Gamma(1+s/2-j)\Gamma(1+s/2+j)}.
\end{equation}
The formula follows from the beta integral; alternatively the
denominator-free recurrence
$(s/2+j)\widehat\phi_s(j)=-(1+s/2-j)\widehat\phi_s(j-1)$ for $j\ge1$,
together with $\widehat\phi_s(0)=A(s)$ and
$\widehat\phi_s(-j)=\widehat\phi_s(j)$, determines the coefficients.
Reciprocal gamma zeros are interpreted literally.

\begin{lemma}[Uniform Fourier tail]\label{lem:fourier}
Fix $0<p_-<p_+<\infty$ and $0<\alpha<\min(p_-,1)$. For
$p\in[p_-,p_+]$, $u\in\R$, and every $R>0$,
\begin{equation}\label{eq:fourier-tail}
 \sum_j|\widehat\phi_{p+iu}(j)|\le C(1+|u|)^2,
 \qquad
 \sum_{|j|>R}|\widehat\phi_{p+iu}(j)|
 \le C(1+|u|)^2 R^{-\alpha}.
\end{equation}
\end{lemma}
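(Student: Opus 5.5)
The bound will come from a closed form for the Fourier coefficients of $\phi_z$ with complex $z=p+iu$, followed by a $j^{-1-\Ree z}$ decay estimate whose constant grows polynomially in $|u|$.

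\textbf{Closed form.} The beta-integral derivation behind \eqref{eq:fourier-real} is valid for $\Ree z>-1$. Alternatively, both sides are analytic in $z$ and agree for $z>0$. Either way,
\[
 \widehat\phi_z(j)=\frac{(-1)^j\Gamma(1+z)}{\Gamma(1+z/2-j)\Gamma(1+z/2+j)} .
\]
The reflection formula $\Gamma(1+z/2-j)\Gamma(j-z/2)=\pi/\sin(\pi(j-z/2))$ then turns this into
\[
 |\widehat\phi_z(j)|=\frac{|\Gamma(1+z)|\,|\sin(\pi z/2)|}{\pi}\,
 \frac{|\Gamma(j-z/2)|}{|\Gamma(1+z/2+j)|},\qquad j\ge1,
\]
and the coefficients are symmetric in $j$.

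\textbf{Tail decay.} For the ratio I will use a uniform gamma-ratio bound. For $j\ge j_0(p_+,|u|)$, Stirling's formula gives $|\Gamma(j+a)/\Gamma(j+b)|\le Cj^{\Ree(a-b)}$. Here $\Ree(a-b)=-1-p$, so each coefficient is at most $C\,|\Gamma(1+z)\sin(\pi z/2)|\,j^{-1-p}$.

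To keep control of $u$ rather than requiring $j\gg|u|$, I will use the product representation
\[
 \frac{\Gamma(j-z/2)}{\Gamma(j+1+z/2)}=\frac{\Gamma(1-z/2)}{\Gamma(1+z/2)}\prod_{v=1}^{j-1}\frac{v-z/2}{v+1+z/2}.
\]
Each factor has modulus at most $\bigl|1-(1+p)/(v+1+z/2)\bigr|$. Its size is about $1-(1+p)/v$ once $v\gtrsim|u|$, and at most $1+O(1/v)$ before that. The real parts are what matter, and $\Ree\bigl[(1+p)/(v+1+z/2)\bigr]=(1+p)(v+1+p/2)/|v+1+z/2|^2$.

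Summing the logarithms, the product is bounded by $C(1+|u|)^{c}\,j^{-1-p}$ for all $j$. Moreover, the prefactor $|\Gamma(1+z)\sin(\pi z/2)\Gamma(1-z/2)/\Gamma(1+z/2)|$ grows at most polynomially in $|u|$: by Stirling on vertical strips, the exponential factors $e^{-\pi|u|/2}$ from the three gammas balance against $e^{\pi|u|/2}$ from the sine and from $1/\Gamma(1+z/2)$. I will track the powers to get total exponent at most $2$. If exact bookkeeping gives a larger exponent, that step will need a sharper argument.

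\textbf{Summation and main obstacle.} The bound $|\widehat\phi_z(j)|\le C(1+|u|)^2(1+|j|)^{-1-p_-}$ gives both sums directly: the full sum is finite, and the tail beyond $R$ is $O(R^{-p_-})\le O(R^{-\alpha})$ for $R\ge1$. For $R<1$ the tail bound follows from the total bound, since $R^{-\alpha}\ge1$.

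There is a fallback if the polynomial power in $u$ comes out wrong. I would instead integrate by parts twice in $\widehat\phi_z(j)=\int|1-e(t)|^ze(-jt)\,\dd t$ away from $t=0$. Each derivative costs a factor $|z|$, which gives the $(1+|u|)^2$. The singularity $|t|^{p}$ is handled by splitting at $|t|\asymp1/j$, which yields $j^{-1-p}$ up to the $\alpha$-loss permitted since $\alpha<\min(p_-,1)$.

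The main obstacle is this uniformity in $u$: keeping the regime $j\lesssim|u|$ under control without losing exponential factors in $|u|$.
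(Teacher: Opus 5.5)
Your primary route fails at the step you flagged. The exact bookkeeping does give a larger exponent, and no pointwise bound of the shape you aim for can repair it. (A small slip first: the product identity should have prefactor $\Gamma(1-z/2)/\Gamma(2+z/2)$, as the case $j=1$ shows.)

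Carrying out Stirling in your gamma formula with $z=p+iu$ and $|u|\ge1$ gives $|\Gamma(1+z)\sin(\pi z/2)|\asymp|u|^{p+1/2}$ and $|\Gamma(j-z/2)/\Gamma(j+1+z/2)|\asymp(j^2+u^2/4)^{-(1+p)/2}$. Hence
\[
 |\widehat\phi_{p+iu}(j)|\asymp\frac{|u|^{p+1/2}}{(j^2+u^2)^{(1+p)/2}}\qquad(j\ge1).
\]
This agrees with stationary phase: for $|j|\lesssim|u|$ the phase $u\log|1-e(t)|-2\pi jt$ has one nondegenerate critical point, with second derivative of order $|u|$. So at $j\approx|u|$ the coefficient has size $|u|^{-1/2}$. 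Any bound $C(1+|u|)^c|j|^{-1-p}$ valid for all $j$ therefore needs $c\ge p+1/2$, and summing it yields only $(1+|u|)^{p_++1/2}$. Your stated target $C(1+|u|)^2(1+|j|)^{-1-p_-}$ is false once $p_->3/2$. This is the regime that matters: the tilts used in Lemma~\ref{lem:smooth} are $s_\kappa/2,s_\kappa,3s_\kappa/2$, and already $s_1>8$.

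The missing idea is to trade decay in $j$ for uniformity in $u$, which the statement allows because $\alpha<1$. With the two-regime size above this is immediate.
\begin{itemize}
\item For $|j|\le|u|$ use the trivial bound $|\widehat\phi_{p+iu}(j)|\le\int_\T|\phi_{p+iu}|=A(p)\le A(p)|u|^{1+\alpha}|j|^{-1-\alpha}$.
\item For $|j|\ge|u|\ge1$ use $|u|^{p+1/2}|j|^{-1-p}\le|u|^{1/2+\alpha}|j|^{-1-\alpha}$.
\item For $|u|\le1$ the gamma formula gives $C|j|^{-1-p}$ directly.
\end{itemize}
All three give $C(1+|u|)^2|j|^{-1-\alpha}$, and summation finishes.

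Your fallback makes the same trade and is essentially the paper's proof. The paper bounds the $L^1$ translation modulus of $\phi'_{p+iu}$ at $h=1/(2|j|)$, splitting at distance $h$ from the integers. It uses $|\phi'|\le C(1+|u|)|t|^{p_--1}$ and $|\phi''|\le C(1+|u|)^2(|t|^{p_--2}+1)$, obtaining $|\widehat\phi_{p+iu}(j)|\le C(1+|u|)^2|j|^{-1-\alpha}$.

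One correction to your description of the fallback: that argument, like your two integrations by parts, caps the decay at $j^{-2}$, with a logarithm at $p=1$. So it yields $|j|^{-1-\min(p,1)}$ up to that logarithm, not $|j|^{-1-p}$. That is exactly why the hypothesis is $\alpha<\min(p_-,1)$.
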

\begin{proof}
The function is absolutely continuous and its derivative is integrable,
since near an integer its magnitude is bounded by
$C(1+|u|)|t|^{p_--1}$. Split an $L^1$ translation difference of the
derivative at distance $h$ from the integers. Within that distance its
integral is at most $C(1+|u|)h^{p_-}$. Away from it, integrate the second
derivative bound $C(1+|u|)^2(|t|^{p_--2}+1)$ over the translation path.
This gives
\[
 \|\phi'_{p+iu}(\cdot+h)-\phi'_{p+iu}\|_1
 \le C(1+|u|)^2 h^\alpha.
\]
The smaller exponent $\alpha$ absorbs the logarithm at $p_-=1$.
Use $h=1/(2|j|)$ in the Fourier coefficient of this difference, and
$\widehat{\phi'}(j)=2\pi ij\widehat\phi(j)$. It follows that
$|\widehat\phi_{p+iu}(j)|\le C(1+|u|)^2|j|^{-1-\alpha}$ for $j\ne0$.
The zero coefficient is bounded by $A(p)$. Summation proves the lemma.
\end{proof}

For a nonempty harmonic block $I(a,c)=\{j:e^a<j\le e^c\}$ let
$H=\sum_{j\in I(a,c)}1/j$ and define
\[
 B_{a,c}(\boldsymbol z;\boldsymbol t)
 =\frac1H\sum_{j\in I(a,c)}\frac1j
                   \prod_{v=1}^d\phi_{z_v}(jt_v),\qquad d=1,2.
\]
Summation by parts in a geometric progression yields
\begin{equation}\label{eq:abel}
 \left|\sum_{j\in I(a,c)}\frac{e(ju)}j\right|
       \le\frac{Ce^{-a}}{\|u\|_\T}\quad(u\notin\Z).
\end{equation}
It follows from Lemma~\ref{lem:fourier} that whenever
\begin{equation}\label{eq:separation}
 \|k_1t_1+\cdots+k_dt_d\|_\T\ge e^{-a+\Delta}
 \quad(0\ne\boldsymbol k\in\Z^d,\ |k_v|\le R),
\end{equation}
$\Ree z_v\in[p_-,p_+]$, and $|\Im z_v|\le T_0$, one has
\begin{equation}\label{eq:B-error}
 \left|B_{a,c}(\boldsymbol z;\boldsymbol t)-\prod_v A(z_v)\right|
 \le C(1+T_0)^{2d}\left(R^{-\alpha}+\frac{e^{-\Delta}}H\right).
\end{equation}
To see this, expand the absolutely convergent Fourier series. The zero
vector contributes the stated product. On the box $|k_v|\le R$ use
\eqref{eq:abel}; off it use one tail estimate and the remaining absolute
Fourier norms. This also proves uniformity when the observation points
move with $n$, subject only to \eqref{eq:separation}.

\subsection{Smoothed kernel comparison with relative-event accuracy}
\begin{lemma}[Polynomially accurate kernel comparison]\label{lem:smooth}
For each fixed $J>0$ there are positive constants $u_1,u_2,u_3$ such that
the following holds, uniformly in the compact $\kappa$ interval. Put
\begin{equation}\label{eq:smooth-params}
 \delta=L^{-10},\quad T_0=L^{u_1},\quad
 R=\lceil L^{u_2}\rceil,\quad\Delta=u_3\ell.
\end{equation}
In a fine block with $q\asymp\omega\asymp\ell$, take $q$ harmonic samples
and their $d$ log-sine sums $X_v$, $d=1,2$. Suppose \eqref{eq:separation}
holds. Tilt their joint height measure by $e^{s\sum_vX_v}$ and normalize.
Add independently to each coordinate a noise $\zeta$ which is the sum
of ten uniforms on $[-\delta/10,\delta/10]$. This normalized smoothed
measure differs in total variation by at most $L^{-J}$ from $d$
independent copies of $f_s^{*q}$ convolved with that noise. Also
\begin{equation}\label{eq:normalizer}
 B_{a,c}(s,\ldots,s;\boldsymbol t)^q
       =A(s)^{dq}\{1+O(L^{-J})\}.
\end{equation}
\end{lemma}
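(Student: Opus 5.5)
The plan is Fourier inversion of the smoothed joint densities, with the characteristic functions expressed through the block transform $B_{a,c}$ at complex arguments. For $\boldsymbol\xi\in\R^d$, the characteristic function of the tilted, normalized vector $(X_v)_{v\le d}$ is exactly
\[
 \Phi(\boldsymbol\xi)=\left(\frac{B_{a,c}(s+i\xi_1,\ldots,s+i\xi_d;\boldsymbol t)}{B_{a,c}(s,\ldots,s;\boldsymbol t)}\right)^{q},
\]
because each harmonic sample $j$ contributes $\prod_v\phi_{s+i\xi_v}(jt_v)$ and the $q$ samples are independent. The reference law has characteristic function $\Psi(\boldsymbol\xi)=\prod_v(A(s+i\xi_v)/A(s))^q$. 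The noise has characteristic function $\widehat\zeta(\xi)=(\sin(\delta\xi/10)/(\delta\xi/10))^{10}$, which is bounded by $C(1+\delta|\xi|)^{-10}$. Both smoothed measures therefore have continuous densities, and the total variation distance is at most $\frac12\int|\Phi-\Psi|\prod_v|\widehat\zeta(\xi_v)|\dd\boldsymbol\xi$ after an $L^1$–$L^2$ or weighted-$L^1$ conversion. To get an $L^1$ bound on densities, I would control $\int(1+|x|^2)^{d}|p-p'|^2$ via derivatives of $\Phi-\Psi$, or simply localize: the two laws put mass $\le L^{-J}$ outside $|x|\le L^{C}$ by the bounded-above, exponentially-left-tailed increments. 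Cauchy–Schwarz on that ball costs only a polynomial factor $L^{Cd}$, so it suffices to make the Fourier $L^2$ and $L^1$ differences $\le L^{-J-C}$.

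I would split the frequency integral into three ranges, with $T_0=L^{u_1}$. First take $|\xi_v|>T_0$ for some $v$. Here I discard $\Phi$ and $\Psi$ using $|\Phi|,|\Psi|\le1$, and the noise tail gives $\int_{|\xi|>T_0}(1+\delta|\xi|)^{-10}\dd\xi$. This is not small unless $T_0\gg\delta^{-1}=L^{10}$. So I choose $u_1>10$; then this range contributes $O(\delta^{-10}T_0^{-9})\le L^{-J-C}$ once $u_1$ is large. Second, on $|\boldsymbol\xi|\le T_0$, I apply \eqref{eq:B-error} to both numerator and denominator. The error is $\epsilon:=C(1+T_0)^{2d}(R^{-\alpha}+e^{-\Delta}/H)$. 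Since $H\ge\omega\asymp\ell$ and $e^{-\Delta}=L^{-u_3}$, choosing $u_2$ and $u_3$ large relative to $u_1$ makes $\epsilon\le L^{-K}$ for any prescribed $K$. The separation hypothesis \eqref{eq:separation} with these $R$ and $\Delta$ is exactly what is assumed. Taking $\boldsymbol\xi=0$ gives \eqref{eq:normalizer} directly, since $q\epsilon=O(\ell L^{-K})$. For general $\boldsymbol\xi$ we get $B(\cdot)=\prod_vA(s+i\xi_v)+O(\epsilon)$.

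The main obstacle is that $\prod_vA(s+i\xi_v)$ decays like $(1+|\xi|)^{-d/2}$ by Stirling, so an additive $\epsilon$ must be compared against a base raised to the power $q\asymp\ell$. Write $a=\prod_v A(s+i\xi_v)/A(s)$ and $b=a+O(\epsilon)$, and use $|b^q-a^q|\le q\epsilon'\max(|a|,|b|)^{q-1}$ with $\epsilon'=O(\epsilon)$. On the whole range $|\xi|\le T_0$ we have $|a|\le1$ and $|b|\le1+O(\epsilon)$, hence $|\Phi-\Psi|\le Cq\epsilon$ pointwise. Integrating over a box of volume $T_0^d$ gives $Cq\epsilon T_0^d\le L^{-J-C}$ after enlarging $u_2,u_3$. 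Thus no saddle refinement is needed; only the order of choices matters: $u_1$ first (to beat the noise bandwidth), then $u_2,u_3$ (to beat $T_0^{2d+d}$ and the localization factor). The same bounds apply to the first derivatives in $\boldsymbol\xi$ if the weighted-$L^2$ route is used. These derivatives bring down $\log|1-e(jt)|$ factors, which Lemma~\ref{lem:fourier} handles with an extra power of $(1+T_0)$. All constants are uniform because $s=s_\kappa$ ranges over a compact set and $p_\pm$ may be fixed around it.
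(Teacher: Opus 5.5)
Your proposal is correct and follows essentially the paper's route. The paper also uses Fourier inversion with the frequency cube split at $T_0$, where the noise tail $\delta^{-d}(\delta T_0)^{-9}$ fixes $u_1$ first, and \eqref{eq:B-error} with $|\cdot|\le1$ for the normalized transforms gives the pointwise error $Cq(1+T_0)^{2d}(R^{-\alpha}+e^{-\Delta}/H)$ on the cube, which then fixes $u_2,u_3$. It likewise integrates the resulting uniform density bound over a polynomial spatial box, with tails controlled by $X_v\le q\log2$ and a tilted exponential Markov bound, and reads off \eqref{eq:normalizer} at $\boldsymbol\xi=0$; the paper takes that Markov bound at the real tilt $s/2$ via \eqref{eq:B-error}, hence $\alpha<s_{\min}/2$, while your optional derivative-based weighted-$L^2$ variant would need more than Lemma~\ref{lem:fourier} and is best dropped in favour of the localization you already give.
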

\begin{proof}
Choose a positive $\alpha<\min(s_{\min}/2,1)$, and apply
\eqref{eq:B-error} to a real tilt interval containing $s/2,s$ and $3s/2$
for every possible $s$. The characteristic function of the tilted vector
is
\[
 \left\{\frac{B_{a,c}(s+it_1',\ldots,s+it_d';\boldsymbol t)}
 {B_{a,c}(s,\ldots,s;\boldsymbol t)}\right\}^{q}.
\]
On $[-T_0,T_0]^d$ compare it with
$\{\prod_v A(s+it_v')/A(s)^d\}^{q}$.
The normalized quantities have modulus at most one, so their $q$th
powers differ by at most
$Cq(1+T_0)^{2d}(R^{-\alpha}+e^{-\Delta}/H)$.
The real denominator is bounded away from zero for large $n$.

The noise transform has modulus at most
$\min(1,C(\delta|t|)^{-10})$. Fourier inversion bounds the uniform
density error inside the frequency box by
$CT_0^d q(1+T_0)^{2d}(R^{-\alpha}+e^{-\Delta}/H)$; outside it the
bound is $C\delta^{-d}(\delta T_0)^{-9}$. For example, first choose
$u_1$ with $9u_1>J+160$, then $u_2,u_3$ with
\[
 \alpha u_2>6u_1+J+30,\qquad u_3>6u_1+J+30.
\]
The density error is then $O(L^{-J-10})$.
On $[-L^2,L^2]^d$ integration costs at most $O(L^4)$. Outside this
spatial box positive tails vanish, since $X_v\le q\log2$. Under the
tilt, a negative tail is at most
\[
 e^{-sL^2/3}
 \left\{\frac{B_{a,c}(s/2,s,\ldots,s;\boldsymbol t)}
 {B_{a,c}(s,\ldots,s;\boldsymbol t)}\right\}^{q}
 \le e^{-cL^2}.
\]
The ratio is bounded by \eqref{eq:B-error}; the reference measures have
the same tail bound. This proves the total variation assertion.
\eqref{eq:normalizer} follows from the real version of \eqref{eq:B-error},
increasing the fixed exponents if necessary. Atoms at $-\infty$ have
zero tilted mass. No total variation comparison with a continuous kernel
is asserted before smoothing.

\paragraph{Frequency tails, normalization, and product errors.}
To make the frequency estimate explicit, let $h_\delta$ be the
characteristic function of the sum of the ten uniforms. Direct
integration of a uniform variable gives
\[
 h_\delta(t)=
 \left(\frac{\sin(\delta t/10)}{\delta t/10}\right)^{10},
 \qquad
 \int_\R|h_\delta(t)|\dd t\le C\delta^{-1}.
\]
The ratio at zero is interpreted as one. For $\delta T_0\ge1$,
\[
 \int_{|t|>T_0}|h_\delta(t)|\dd t
 \le C\delta^{-1}(\delta T_0)^{-9}.
\]
For $d=1,2$, the complement of the frequency cube is covered by
the $d$ events $|t_v|>T_0$. Integrating the product noise transform
and using these two bounds gives
$C\delta^{-d}(\delta T_0)^{-9}$. Both normalized characteristic
functions have modulus at most one, which justifies this bound
for their difference as well. With $\delta=L^{-10}$ its worst
exponent, at $d=2$, is $110-9u_1$. Inside the cube the exponent is
at most $6u_1+1-\min(\alpha u_2,u_3)$, using $q\le L$ and $H\ge1$.
The stated choices make both bounds smaller than $L^{-J-10}$
after absorbing fixed constants. The integration volume
$O(L^4)$ then remains within the total variation budget.

The normalizer is controlled separately from this total
variation estimate. If a real block normalizer is
$A(s)^d(1+e_i)$, its relative error tends to zero uniformly, and
$|e_i|\le1/2$ for large $n$. The inequality
$|\log(1+e_i)|\le2|e_i|$ shows that raising it to the $q_i$th
power costs at most $2q_i|e_i|$ in its logarithm. The frequency
exponents can be increased once so that the desired
$L^{-J}$ bound includes this factor. On multiplying the fine
block normalizers the logarithmic errors add.

For normalized probability kernels $\mu_i,\nu_i$, changing
one factor at a time in their product proves
\[
 \left|\int F\dd\bigotimes_{i=1}^m\mu_i
       -\int F\dd\bigotimes_{i=1}^m\nu_i\right|
 \le\sum_{i=1}^m\|\mu_i-\nu_i\|_{\TV},
 \qquad 0\le F\le1.
\]
One may take total variation here to be the supremum over
tests in $[0,1]$; the $L^1$ density bound also bounds this
quantity. In each telescoping term the other factors are
probability measures, so the same inequality applies however
many coordinates the path test $F$ examines. For the present
kernels the resulting bound is $mL^{-J}$, with the same
choice of exponents for one and two observation points.

\end{proof}

\subsection{An integrated bound for every arithmetic arc}
For $s>0$ define
\begin{equation}\label{eq:bs}
 b_s=\begin{cases}0,&0<s\le2,\\
 (s-1)\log2-\lambda(s),&s>2.
 \end{cases}
\end{equation}
\begin{lemma}[Entropy gap and a block envelope]\label{lem:entropy}
At $s=s_\kappa$ one has $\kappa b_s<1$, with a uniform strict gap on
the compact $\kappa$ interval. For every block,
\begin{equation}\label{eq:envelope}
 B_{a,c}(s;t)\le A(s)\exp\{b_s+C/H\}\quad(t\in\T).
\end{equation}
\end{lemma}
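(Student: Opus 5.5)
The lemma has two parts, and I would prove them separately. The entropy gap is a pure inequality about $\lambda$, to be combined with the critical equation \eqref{eq:critical}. The envelope \eqref{eq:envelope} comes from comparing $\phi_s$ with $\phi_2$ when $s>2$, and from a sign property of the Fourier coefficients \eqref{eq:fourier-real} when $0<s\le2$.

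\emph{Entropy gap.} For $s_\kappa\le2$ there is nothing to prove, since $b_s=0$. For $s=s_\kappa>2$, the critical identity gives $1=\kappa\{s\lambda'(s)-\lambda(s)\}$. Hence
\[
 1-\kappa b_s=\kappa\,g(s),\qquad g(s)=s\lambda'(s)-(s-1)\log2 ,
\]
and it suffices to show $\inf_{s>2}g(s)>0$, since $\kappa\ge\kappa_->0$.

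Write $\lambda'(s)=\psi(1+s)-\psi(1+s/2)$ and put $y=(1+s)/2$. Legendre's duplication formula for $\psi$ gives $\log2-\lambda'(s)=\tfrac12\{\psi(y+\tfrac12)-\psi(y)\}$, so
\[
 g(s)=\log2-(y-\tfrac12)\{\psi(y+\tfrac12)-\psi(y)\}.
\]
By concavity of $\psi$ we have $\psi(y+\tfrac12)-\psi(y)\le\tfrac12\psi'(y)$. I would then use the standard bound $\psi'(y)<1/y+1/y^2$ for $y>0$. This gives
\[
 (y-\tfrac12)\{\psi(y+\tfrac12)-\psi(y)\}
 \le\tfrac12\Bigl(1+\frac1{2y}-\frac1{2y^2}\Bigr)\le\frac9{16}.
\]
Therefore $g\ge\log2-9/16>0$ on $(2,\infty)$, and $\kappa b_s\le1-\kappa_-(\log2-9/16)$ uniformly on the compact interval.

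\emph{Envelope for $s>2$.} Pointwise $|1-e(x)|^s\le2^{s-2}|1-e(x)|^2$, hence $B_{a,c}(s;t)\le2^{s-2}B_{a,c}(2;t)$. Since $\phi_2(x)=2-2\cos(2\pi x)$,
\[
 B_{a,c}(2;t)=2-\frac2H\sum_{j\in I(a,c)}\frac{\cos(2\pi jt)}j .
\]
The needed input is the classical fact that the partial sums $\sum_{M<j\le N}\cos(jx)/j$ are bounded below by an absolute constant. The terms with $j\|x\|$ small are positive, and the rest are $O(1)$ by \eqref{eq:abel}. This gives $B_{a,c}(2;t)\le2+C/H$. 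Consequently
\[
 B_{a,c}(s;t)\le2^{s-1}(1+C/H)\le A(s)\exp\{b_s+C/H\},
\]
because $2^{s-1}=A(s)e^{(s-1)\log2-\lambda(s)}$.

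\emph{Envelope for $0<s\le2$.} Here I would expand $\phi_s$ in its absolutely convergent Fourier series, which is justified by Lemma~\ref{lem:fourier}:
\[
 B_{a,c}(s;t)=A(s)+\frac1H\sum_{k\ne0}\widehat\phi_s(k)\sum_{j\in I(a,c)}\frac{e(jkt)}j .
\]
From \eqref{eq:fourier-real}, $\widehat\phi_s(k)\le0$ for every $k\ne0$ when $0<s\le2$. For $k=1$ the gamma argument $1+s/2-k$ lies in $(1,2]$ and $(-1)^k=-1$. For $k\ge2$ the argument lies in $(-(k-1),-(k-2)]$, where $\Gamma$ has sign $(-1)^{k-1}$ or the reciprocal vanishes, and this sign cancels against $(-1)^k$. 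Pairing $\pm k$ makes each inner sum a real cosine sum, which is $\ge-C$ by the same lower bound as before. Hence
\[
 B_{a,c}(s;t)\le A(s)+\frac CH\sum_k|\widehat\phi_s(k)|\le A(s)e^{C/H},
\]
uniformly on compact $s$-sets, since $A(s)$ is bounded below there.

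The main obstacle is the entropy-gap inequality: it needs a quantitative digamma estimate valid for all $s>2$, not just asymptotically. The duplication and concavity route reduces it to one elementary bound on $\psi'$. If that bound were stated too sharply, I would fall back on a direct numerical check on a bounded range of $s$, together with the asymptotic $g(s)\to\log2-\tfrac12$ for large $s$.
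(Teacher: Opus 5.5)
Your proof is correct and follows the paper's argument. It uses the same comparison $\phi_s\le2^{s-2}\phi_2$ for $s>2$, the same nonpositivity-and-pairing argument with the uniform lower bound for cosine sums for $0<s\le2$, and the same reduction of the gap to $s(\log2-\lambda'(s))<\log2$. The only difference is in that last bound: the paper gets $1/2$ from $\log2-\lambda'(s)=\int_0^\infty e^{-sx}(1+e^x)^{-1}\dd x$, while your duplication, concavity and $\psi'(y)<1/y+1/y^2$ route gives $9/16$, which still suffices.

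There is one harmless slip: for $k=1$ the gamma argument is $1+s/2-1=s/2\in(0,1]$, not $(1,2]$. Since $\Gamma>0$ there, $\widehat\phi_s(1)<0$ as you need.
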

\begin{proof}
For an arbitrary interval of positive integers,
$\sum\cos(2\pi ju)/j\ge-C$, uniformly in $u$. Indeed the cosines are
nonnegative up to $(6\|u\|_\T)^{-1}$; the remaining sum is bounded by
\eqref{eq:abel} starting at that cutoff. If the cutoff is below one,
start at one, and if $u$ is integral the sum is positive.
For $0<s\le2$, all nonzero coefficients in \eqref{eq:fourier-real} are
nonpositive. Pairing them and applying the preceding lower bound gives
$B_{a,c}(s;t)\le A(s)+C/H$. For $s>2$ use
$\phi_s\le2^{s-2}\phi_2$, giving $B_{a,c}(s;t)\le2^{s-1}+C/H$.
This proves \eqref{eq:envelope}.

The gamma duplication formula and the integral for the difference of two
digamma functions give
\[
 \log2-\lambda'(s)=\int_0^\infty\frac{e^{-sx}}{1+e^x}\dd x,
 \qquad 0<s(\log2-\lambda'(s))<\tfrac12.
\]
For example, the integral follows by subtracting the two logarithmic
derivatives in the duplication formula and expanding
$(1+e^x)^{-1}$, or directly integrating the convergent digamma difference.
Thus for $s>2$, \eqref{eq:critical} gives
\[
 \kappa b_s=1+\kappa\{s(\log2-\lambda'(s))-\log2\}<1.
\]
For $s\le2$ the assertion is immediate. Continuity and compactness make
the gap uniform.
\end{proof}

Choose $\eta>0$ so small that $(\kappa+\eta)b_{s_\kappa}<1$ uniformly.
Use this value in \eqref{eq:regular}. If $X_i(t)$ denotes the harmonic
height sum in fine block $i$ and $X_{\rm mid}=\sum_iX_i$, then
\begin{lemma}[Integrated middle moment]\label{lem:integrated}
For sufficiently large fixed $r_*$ (as specified below), uniformly over
regular count vectors,
\begin{equation}\label{eq:integrated}
 \E\int_\T e^{sX_{\rm mid}(t)}\dd t\le A(s)^Q\{1+o(1)\}.
\end{equation}
The expectation here is evaluation of the normalized product of harmonic
kernels, not an assumption about independence in the original Ewens law.
\end{lemma}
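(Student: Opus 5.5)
The plan is to compute the expectation exactly as an integral of a product of block envelopes, and then split the circle according to arithmetic separation. For fixed $t$, the fine-block height sums $X_i(t)$ are independent under the normalized product harmonic kernel. Each $X_i(t)$ is a sum of $q_i$ independent harmonic samples from $I_i=I(a_{i-1},a_i)$. Hence by Tonelli
\[
 \E\int_\T e^{sX_{\rm mid}(t)}\dd t=\int_\T\prod_{i=1}^m B_{a_{i-1},a_i}(s;t)^{q_i}\dd t ,
\]
and it suffices to bound this integral by $A(s)^Q\{1+o(1)\}$ uniformly over regular count vectors.

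Next I would fix $R=\lceil L^{u_2}\rceil$ and $\Delta=u_3\ell$, taking $\alpha u_2>1$ and $u_3>1$. For each $t$, let $i(t)$ be the largest index $i$ for which the one-point separation condition \eqref{eq:separation} with $a=a_{i-1}$ fails, that is, $\|kt\|_\T<e^{-a_{i-1}+\Delta}$ for some $1\le|k|\le R$; put $i(t)=0$ if no such index exists. The threshold $e^{-a+\Delta}$ decreases in $a$, so the failing blocks form the initial segment $\{1,\dots,i(t)\}$.

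On the separated blocks $i>i(t)$, the real case $d=1$ of \eqref{eq:B-error} gives
\[
 B_i\le A(s)\{1+\epsilon\},\qquad \epsilon=C(R^{-\alpha}+e^{-\Delta}/\omega).
\]
Since $Q\le\kappa_+L$ and $Q\epsilon=o(1)$, these blocks contribute at most $A(s)^{Q-S_{i(t)}}\{1+o(1)\}$. On the failing blocks I would use the envelope \eqref{eq:envelope}, giving $B_i\le A(s)e^{b_s+C/\omega}$. The set $\{i(t)\ge i\}$ has measure at most $\sum_{1\le k\le R}2k\cdot e^{-a_{i-1}+\Delta}/k\le 2Re^{\Delta}e^{-a_{i-1}}$.

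Combining the two bounds, the integral is at most
\[
 A(s)^Q\{1+o(1)\}\Bigl[1+\sum_{i=1}^m 2Re^{\Delta}e^{-a_{i-1}}\exp\{S_i(b_s+C/\omega)\}\Bigr].
\]
Regularity \eqref{eq:regular} gives $S_i\le(\kappa+\eta)i\omega$ and $S_i/\omega\le Ci$. Lemma~\ref{lem:entropy} with the choice of $\eta$ gives a uniform gap $g=1-(\kappa+\eta)b_s>0$. Since $a_{i-1}=r+(i-1)\omega$, the $i$th summand is at most
\[
 2Re^{\Delta}e^{-r}e^{\omega}\exp\{-i(g\omega-C)\}.
\]
With $\omega\ge A_0\ell/2$ and $A_0$ large, the ratio $e^{-(g\omega-C)}$ is $o(1)$, so the series is $O(Re^{\Delta}e^{-r}e^{\omega})$. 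This is at most $L^{u_2+u_3+A_0-r_*+o(1)}$. Choosing $r_*>u_2+u_3+A_0+1$ makes the bracket $1+o(1)$. This is the meaning of ``sufficiently large fixed $r_*$'', and it is compatible with \eqref{eq:rg}, which only bounds $r_*$ from below.

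The main obstacle is the bookkeeping at the arithmetic arcs. One point to check is that the envelope loss accumulated over the entire failing initial segment is always beaten by the measure of the arc. This is exactly where the entropy gap $\kappa b_s<1$, the upper count tolerance $(\kappa+\eta)\omega$, and the starting scale $a_0=r$ enter. The other point is that the $C/\omega$ corrections, and the one extra block of width $\omega\le A_0\ell$ that is not covered by the geometric decay, are absorbed by the constants. I expect no difficulty from $t$ exactly at rationals, since those form a null set.
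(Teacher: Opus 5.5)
Your proposal is correct and matches the paper's proof step for step. Both arguments do the following:
\begin{itemize}
\item write the expectation as $\int_\T\prod_i B_{a_{i-1},a_i}(s;t)^{q_i}\dd t$;
\item split $\T$ by the last block at which one-point separation fails (your $i(t)$ is exactly the paper's class index $k$, with arc measure at most $CRe^{-a_{k-1}+\Delta}$);
\item bound the unseparated prefix by the envelope \eqref{eq:envelope}, using the entropy gap and the upper count tolerance $(\kappa+\eta)\omega$;
\item bound the separated suffix by $A(s)^{Q-S_k}\{1+o(1)\}$;
\item let the arc measure beat the prefix loss up to one factor $e^{\omega}\le L^{A_0}$, giving the same condition $r_*>u_2+u_3+A_0+O(1)$.
\end{itemize}
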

\begin{proof}
Let $d_R(t)=\min_{1\le|j|\le R}\|jt\|_\T$.
For $d_R(t)\ge e^{-r+\Delta}$ every block is separated and
\eqref{eq:normalizer} gives $A(s)^Q(1+o(1))$.
Partition the other points according to
\[
 e^{-a_k+\Delta}\le d_R(t)<e^{-a_{k-1}+\Delta},\quad1\le k\le m,
\]
including all smaller distances in class $m$. That class has measure at
most $CR e^{-a_{k-1}+\Delta}$. Blocks after $k$ are separated.
Using \eqref{eq:envelope}, $q_i\le(\kappa+\eta)\omega$, and
$\omega\to\infty$, there is a fixed $\rho<1$ such that
\[
 \prod_{i\le k}B_{a_{i-1},a_i}(s;t)^{q_i}
 \le A(s)^{S_k}e^{\rho(a_k-r)}.
\]
The per-block $Cq_i/H_i$ errors contribute $O(k)$, which is absorbed
by the uniform strict gap since $a_k-r=k\omega$.
The suffix contributes $A(s)^{Q-S_k}(1+o(1))$; no suffix is required for
$k=m$. The normalized integral over all these classes is at most
\[
 C L^{u_2+u_3}e^{-r+\rho\omega}
 \sum_{k=1}^m e^{-(1-\rho)(a_{k-1}-r)}=o(1)
\]
when $r_*>u_2+u_3+A_0+2$. The geometric sum is bounded.
\end{proof}

\section{Closing the maximum bounds inside the exact coefficients}
\label{sec:maximum}

\subsection{Order of constants}
All choices are uniform over the fixed interval of $\kappa$. Choose
$\eta$ from Lemma~\ref{lem:entropy}, then $A_0,K_E,C_E$ from
Lemma~\ref{lem:regular}. Select $D_0,C_*$ in Lemma~\ref{lem:boxes}.
Next fix $J>2C_*+12$ and select $u_1,u_2,u_3$ from
Lemma~\ref{lem:smooth}. Finally take $g_*,r_*$ so large that
\begin{align}\label{eq:constant-order}
 s_{\min}g_*&>2u_2+u_3+A_0+2C_*+12,\\
 r_*&>64s_{\max}g_*+1,\qquad
 r_*>2u_2+u_3+A_0+12.\nonumber
\end{align}
This is not circular: Lemma~\ref{lem:boxes} explicitly made $C_*$
independent of the last two constants. Throughout this section the count
vector is regular, until its probability is restored at the end.

\subsection{A first and relative second kernel calculation}
Let the harmonic product measure at fixed counts be denoted by $\E_H$.
Define
\[
 \mathcal D_L=\{t\in\T:\|jt\|_\T\ge e^{-r+\Delta},\ 1\le |j|\le R\}.
\]
Its complement has measure at most $CR e^{-r+\Delta}=o(1)$.
The following lower bound is uniform over every measurable
$\mathcal D\subset\mathcal D_L$ of measure at least $1/2$.
Let $\zeta_i$ be independent noises as in Lemma~\ref{lem:smooth}, put
$U_\zeta=\sum_i\zeta_i$, and define a nonnegative functional of the
actual harmonic samples by
\begin{equation}\label{eq:Z}
 Z(t)=\E_\zeta\left[e^{-sU_\zeta}
  \ind_{\{(\sum_{j\le i}(X_j(t)+\zeta_j))_{i\le m}\in\mathcal A\}}
                \right],\qquad Z_{\mathcal D}=\int_{\mathcal D}Z(t)\dd t.
\end{equation}
The event $\mathcal A$ is exactly \eqref{eq:path-weight}; the notation
means its cumulative-height inequalities. If $Z_{\mathcal D}>0$, then
some actual point in $\mathcal D$ has middle height at least $y-m\delta$.
The noises are auxiliary integrations, not alterations of the final
spectral field.

For $t\in\mathcal D_L$, tilt each fine height kernel by $e^{sX_i(t)}$.
The normalizer is $A(s)^Q(1+o(1))$. The noise factor in \eqref{eq:Z}
combines with the inverse tilt to give the terminal weight
$e^{-s\sum_i(X_i(t)+\zeta_i)}$. After removing $e^{-sy}$, the test
function is bounded by one on $\mathcal A$.
Telescoping the product of the normalized smoothed measures gives error
at most $mL^{-J}$. Since $m\le L$ and $p_L\ge L^{-C_*}$,
\begin{equation}\label{eq:rare-accuracy}
 mL^{-J}=o(p_L^2),
\end{equation}
not just $o(1)$. Lemma~\ref{lem:smooth} therefore gives uniformly in $t$
\begin{equation}\label{eq:first-moment}
 \E_H Z(t)=(1+o(1))\mu_Lp_L,\qquad
 \mu_L=e^{-sy+\lambda Q}=e^{-a_*+r}.
\end{equation}
Consequently $\E_H Z_{\mathcal D}=(1+o(1))\Leb(\mathcal D)\mu_Lp_L$.

For a pair define
\[
 d_R(t,u)=\min_{\substack{(j,l)\in\Z^2\setminus\{(0,0)\}\\|j|,|l|\le R}}
                         \|jt+lu\|_\T.
\]
If $d_R(t,u)\ge e^{-r+\Delta}$, apply the two-point comparison in every
block. The reference kernels at the two points are independent, including
independent noise vectors in the two copies of \eqref{eq:Z}. Thus
\begin{equation}\label{eq:far-pair}
 \E_H[Z(t)Z(u)]=(1+o(1))\mu_L^2p_L^2
\end{equation}
uniformly over these far pairs. Equation~\eqref{eq:rare-accuracy} is
what makes this relative statement valid.

\paragraph{The bounded tests after the change of measure.}
Let $\mathcal N_t$ be the product of the one-point block
normalizers and let $\widetilde\Pp_t$ be the corresponding
product tilted law, before adjoining the noises. Set
$\widetilde W_i=\sum_{j\le i}(X_j(t)+\zeta_j)$. The exact
identity, with no asymptotic comparison yet, is
\[
 \E_H Z(t)=
 e^{-sy}\mathcal N_t\,
 \widetilde\E_{t,\zeta}
       [e^{-s(\widetilde W_m-y)}\ind_{\mathcal A}].
\]
Indeed the inverse tilt contributes
$e^{-s\sum_iX_i(t)}$, and the factor
$e^{-sU_\zeta}$ in $Z(t)$ supplies the missing noise sum.
On $\mathcal A$, $0\le\widetilde W_m-y\le1$, so the
bracketed test lies in $[0,1]$. Replacing its smoothed tilted
product law by the reference law costs at most $mL^{-J}$.
The reference expectation is the same $p_L$ for every $t$.
The remaining prefactor is
$e^{-sy}A(s)^Q(1+o(1))=\mu_L(1+o(1))$.

For a far pair use the joint tilt
$e^{s\sum_i(X_i(t)+X_i(u))}$. In the product $Z(t)Z(u)$ the
two noise integrals are independent copies. After inverse
tilting the test is the product of two tests in $[0,1]$.
The reference one-point paths are independent under the
reference two-point law, so its expectation is exactly
$p_L^2$. It is this reference independence, supplied by
two-point Fourier separation, that is used in
\eqref{eq:far-pair}. Since
\[
 \frac{mL^{-J}}{p_L}\le L^{1-J+C_*},
 \qquad
 \frac{mL^{-J}}{p_L^2}\le L^{1-J+2C_*},
\]
both errors tend to zero under the one chosen inequality
$J>2C_*+12$. Relative normalizer errors also tend to zero.
No lower bound on the unnormalized rare probability
$\mu_Lp_L$ is used for the smoothing step; its exponential
scale has already been extracted exactly.

For non-far pairs let class $k$ be
\[
 e^{-a_k+\Delta}\le d_R(t,u)<e^{-a_{k-1}+\Delta},\quad1\le k\le m,
\]
including every smaller distance in class $m$. The measure of that
class in $\T^2$ is at most
\begin{equation}\label{eq:near-area}
 CR^2 e^{-a_{k-1}+\Delta}.
\end{equation}
Every nonzero integer linear map from $\T^2$ to $\T$ preserves uniform
measure, so a union bound over the $O(R^2)$ maps proves this estimate.
Blocks after $k$ are two-point separated.

On both noisy path events, the actual terminal sums are at least
$y-m\delta$, whereas the prefix at $u$ through block $k$ is at most
$B_k+m\delta$. Applying exponential Markov to the terminals and using
that prefix bound yields
\begin{align}\label{eq:near-moment}
 \E_H[Z(t)Z(u)]
 &\le e^{-2sy+sB_k+O(sm\delta)}
    \E_H e^{s\sum_{i\le k}X_i(t)}
    \E_H e^{s\sum_{i>k}(X_i(t)+X_i(u))}\\
 &\le \exp\{-2sy+sB_k+\lambda S_k+2\lambda(Q-S_k)+o(1)\}.
 \nonumber
\end{align}
The two factors in the first line use disjoint blocks, which are product
kernels under $\E_H$. The prefix uses only one-point separation of
$t\in\mathcal D_L$; it does not assume independence of the two
unseparated prefixes. The suffix uses two-point separation after $k$.
For $k=m$ it is exactly one. The bounded noise weights cause only the
displayed $O(sm\delta)$ error.
Since $sB_k=a_k+\lambda S_k-sG$, division by $\mu_L^2p_L^2$ gives
\begin{equation}\label{eq:near-normalized}
 \frac{\E_H[Z(t)Z(u)]}{\mu_L^2p_L^2}
 \le C e^{a_k}L^{-s_{\min}g_*+2C_*}.
\end{equation}
Multiply by \eqref{eq:near-area} and sum over $m\le L$ classes.
Because $a_k-a_{k-1}=\omega\le A_0\ell$, the normalized near contribution
is at most
\begin{equation}\label{eq:near-sum}
 C L^{1+2u_2+u_3+A_0-s_{\min}g_*+2C_*}=o(1).
\end{equation}
Thus the exponential correlation bound and the arithmetic area cancel
at the correct scale. The polynomial gap $G$ beats all remaining errors.

The total measure of non-far pairs is $O(R^2e^{-r+\Delta})=o(1)$.
Equations~\eqref{eq:first-moment}, \eqref{eq:far-pair}, and
\eqref{eq:near-sum} show
\[
 \frac{\E_H Z_{\mathcal D}^2}{(\E_H Z_{\mathcal D})^2}\longrightarrow1
\]
uniformly over the stated environments and all choices of $\mathcal D$.
Chebyshev's inequality proves
\begin{equation}\label{eq:middle-lower}
 \Pp_H\left(\sup_{t\in\mathcal D}X_{\rm mid}(t)
             <\frac{a_*-r+\lambda Q}{s}-m\delta\right)=o(1).
\end{equation}
All integrals defining these functionals are justified by nonnegativity;
$Z(t)\le e^{sm\delta}$. Singularities of the logarithm at roots have zero
tilted mass and cannot create a high point.

\paragraph{The last near class and the uniform second-moment conclusion.}
The inequality for near pairs can be verified before taking
any expectation. For fixed noises write $T_t,T_u$ for the
two actual terminal heights, $P_t,P_u$ for the actual prefixes
through block $k$, and $R_t,R_u$ for the two actual
suffix heights after block $k$. On the two path events,
$T_t,T_u\ge y-m\delta$ and $P_u\le B_k+m\delta$.
After dropping the indicators and bounding the two noise
weights, the integrand is bounded by
\[
 \exp\{-2sy+sB_k+Csm\delta\}
       \exp\{sP_t+s(R_t+R_u)\}.
\]
This follows by multiplying the two terminal exponential
Markov bounds and then using $T_u=P_u+R_u$.
Only the $t$-prefix remains random in the bound; the
$u$-prefix has been bounded by its barrier. Products across
prefix and suffix blocks factor under the actual harmonic
product law. At $k=m$ both suffixes are empty, their
exponential factor and expectation are one, and the same
bound is valid. Thus pairs with arbitrarily small or zero
arithmetic separation are included in the last class.

For the area estimate, a nonzero integer vector $(j,l)$
defines a surjective homomorphism
$(t,u)\mapsto jt+lu$ of tori, so the inverse image of a
distance-$\epsilon$ interval has Haar measure at most
$2\epsilon$. Summing over at most $(2R+1)^2-1$ vectors
proves \eqref{eq:near-area}, also if some vectors have
common divisors or one coordinate is zero. The cancellation
in the normalized moment is exact:
\[
 -2sy+sB_k+\lambda S_k+2\lambda(Q-S_k)
       -\log(\mu_L^2)=a_k-sG.
\]
Hence its only nonpolynomial factor, $e^{a_k}$, is canceled
by $e^{-a_{k-1}}$ from area, leaving
$e^\omega\le L^{A_0}$. The finite last class uses the same
area bound and requires no separated suffix.

For completeness, the passage from these bounds to probability
is uniform in the measurable set $\mathcal D$. Put
$d_{\mathcal D}=\Leb(\mathcal D)\ge1/2$ and
$a_L=d_{\mathcal D}\mu_Lp_L>0$. The first moment is
$a_L(1+o(1))$. On far pairs the second-moment integrand is
at most $(1+o(1))\mu_L^2p_L^2$, whose integral over
$\mathcal D^2$ is at most $(1+o(1))a_L^2$. The entire
near integral is $o(\mu_L^2p_L^2)=o(a_L^2)$, since
$d_{\mathcal D}^{-2}\le4$. Thus
\[
 \Pp_H(Z_{\mathcal D}=0)
 \le\frac{\Var_H(Z_{\mathcal D})}{(\E_H Z_{\mathcal D})^2}
 =\frac{\E_H Z_{\mathcal D}^2}{(\E_H Z_{\mathcal D})^2}-1
 =o(1).
\]
For example, if the first moment is at least
$(1-\epsilon)a_L$ and the second is at most
$(1+\epsilon)a_L^2$, with $0<\epsilon<1$, the last bound
is at most $(1+\epsilon)/(1-\epsilon)^2-1$.
All bounds depend on $\mathcal D$ only through its lower
mass bound. Also $Z_{\mathcal D}>0$ entails a point and
a noise realization with terminal height at least $y$;
the actual height there is at least $y-m\delta$.
This completes the lower estimate with its required
uniformity over every such $\mathcal D$.

\subsection{The upper kernel calculation}
For a nonzero polynomial $P$ of degree $d\ge1$,
\begin{equation}\label{eq:poly-estimates}
 \sup_{|z|\le1}|P'(z)|\le ed\|P\|_\infty,\qquad
 \int_\T |P(e(t))|^s\dd t\ge\frac{c_s}{d}\|P\|_\infty^s.
\end{equation}
For the first estimate, the maximum principle applied to $P(z)/z^d$
outside the disk gives $|P(z)|\le|z|^d\|P\|_\infty$. Cauchy's estimate
on a circle of radius $1/d$ about a point in the unit disk then gives the
derivative bound. An arc of length $c/d$ around a boundary maximizer
has modulus at least half the maximum, proving the second estimate.
The constants may be uniform for $s$ in its compact interval.
The middle polynomial has degree at most $Qb\le CLb$ deterministically.
Hence Lemma~\ref{lem:integrated} and Markov's inequality imply, for a
sufficiently large fixed $C_1$,
\begin{equation}\label{eq:middle-upper}
 \Pp_H\left(\max_t X_{\rm mid}(t)
       >\frac{a_*+\lambda Q}{s}+C_1\ell\right)
 \le C L^{1-s_{\min}C_1}=o(1).
\end{equation}
Indeed this event forces the integral in \eqref{eq:integrated} to be at
least $cA(s)^Q L^{sC_1-1}$. No spatial discretization assumption is
needed.

\subsection{Restoring low cycles before the coefficient is completed}
Let $q_0\le Cr$ and write $X_0$ for the low field. For every deterministic
low configuration,
\[
 \int_\T(-X_0(t))_+\dd t\le c_Vq_0,\qquad
 c_V=\int_\T(-\log|1-e(t)|)_+\dd t<\infty.
\]
Choose fixed $C_2$ so large that $\{X_0<-C_2r\}$ has measure at most
$1/8$. For large $n$ its complement intersected with $\mathcal D_L$
is a measurable set $\mathcal D$ of measure at least $1/2$.
Under the harmonic product measure it depends only on low frequencies,
which are independent of the middle frequencies. The bound
\eqref{eq:middle-lower} is uniform over every such measurable set, so it
applies conditionally to this choice. Together with
$X_0(t)\le q_0\log2$, \eqref{eq:middle-upper}, and
$q_0=O(r)$, it follows that
\begin{equation}\label{eq:short-H}
 \Pp_H\left(\left|\log\left\|\prod_{j\le b}(1-z^j)^{C_j}\right\|_\infty
             -\frac{a_*+\lambda(k-l)}s\right|>C_3\ell\right)=o(1).
\end{equation}
Here the counts are fixed, $q_0+Q=k-l$, $r=r_*\ell$, and the constants
remain uniform on all regular count vectors with the bounds above.

Now apply Theorem~\ref{thm:positive-transfer} to the full short-cycle
configuration, including the low part, and to the indicator of the event
in \eqref{eq:short-H}. Its degree is at most $kb$ and its coefficients
are positive. The actual conditional failure probability, given the same
count vector, is at most $(1+O(\eps_n))$ times its harmonic-product
failure probability. This is a single comparison after all projections;
there is no factor $(1+\eps_n)^m$ and no subtraction of an error bigger
than the high-point probability. Finally Lemma~\ref{lem:regular} shows
that these count vectors have actual conditional probability tending to
one. Therefore \eqref{eq:short-H} holds under $\Pp_{n,k}$ as well,
uniformly in $k$, with its random $l=L_*\le C\ell$.

\paragraph{Why the random low-frequency set is admissible.}
The deterministic estimate for $X_0$ uses
$(-\sum_vx_v)_+\le\sum_v(-x_v)_+$ and invariance of Haar
measure under $t\mapsto jt$, for each positive integer
frequency $j$. Consequently
\[
 \Leb\{t:X_0(t)<-C_2r\}\le\frac{c_Vq_0}{C_2r}\le\frac18
\]
after one fixed choice of $C_2$. Roots form a finite set
and do not affect this integral or the measure bound.
Together with $\Leb(\mathcal D_L^c)=o(1)$ this gives the
claimed measurable $\mathcal D$ with mass at least one half.
For every realized low configuration apply the just-proved
uniform lower estimate to that particular set. The middle
harmonic sample law is unchanged by this conditioning.
Integrating over low configurations therefore preserves
the same upper bound on its failure probability.

The centering adjustment at this step is explicit:
\[
 \frac{a_*+\lambda(q_0+Q)}s
       -\left(\frac{a_*-r+\lambda Q}s-C_2r-m\delta\right)
 =\frac{r+\lambda q_0}s+C_2r+m\delta=O(\ell).
\]
For the upper estimate one adds at most $q_0\log2$
to the middle maximum. The compact bounds on $s$ and
$\lambda(s)$, $q_0\le Cr$, and the fixed value of $r_*$
make these constants uniform.

Finally, let $\mathcal R_{n,k}$ denote the set of count
vectors obeying all regularity and reservoir-count bounds.
Its actual conditional probability tends uniformly to one.
On this set the normalized order estimate bounds the
conditional short-field failure by a common quantity
$\frac{1+\eps_n}{1-\eps_n}o(1)$, independent of the
individual count vector. Summing these conditional
probabilities bounds the unconditional failure by
\[
 \Pp_{n,k}(\mathcal R_{n,k}^{\,c})
       +\frac{1+\eps_n}{1-\eps_n}o(1)=o(1).
\]
Only count vectors of positive actual mass enter this
sum. The positive transfer also supplies positivity
of the denominator whenever the reference mass is
positive in the reservoir range.

\subsection{Long-cycle insertion with an exact conditional moment}
\begin{lemma}[Insertion stability]\label{lem:insertion}
Let $P$ be nonzero of degree $d\ge1$ and
$Q(z)=\prod_v(1-z^{j_v})$, where all $j_v$ are positive integers. Then
\begin{equation}\label{eq:insertion}
 -\log2-2ed\sum_vj_v^{-1}
 \le\log\|PQ\|_\infty-\log\|P\|_\infty
 \le(\#v)\log2.
\end{equation}
For $d=0$ the lower bound can be replaced by zero.
\end{lemma}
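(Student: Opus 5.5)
The upper bound is immediate and I will do it first. On the unit circle $|1-z^{j}|\le2$, so $|PQ|\le2^{\#v}|P|$ pointwise, and taking suprema gives $\log\|PQ\|_\infty\le\log\|P\|_\infty+(\#v)\log2$.

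For the lower bound I will use the derivative estimate in \eqref{eq:poly-estimates}. Pick $t_0$ with $|P(e(t_0))|=\|P\|_\infty$. The idea is to find a nearby point $t$ at which $|P|$ has lost at most a factor two while every factor $|1-e(j_vt)|$ is not too small. Since $|(P\circ e)'(t)|\le2\pi e d\|P\|_\infty$, on the arc $|t-t_0|\le h:=1/(4\pi e d)$ we have $|P(e(t))|\ge\|P\|_\infty/2$. This is the source of the $-\log2$.

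It then suffices to find $t$ in this arc with
\[
 \sum_v\log|1-e(j_vt)|\ge-2ed\sum_vj_v^{-1}.
\]
For this I will average over the arc. Split $\log|1-e(x)|=\log|2\sin\pi x|$ into its negative part only; the positive part helps. Set $I=[t_0-h,t_0+h]$ of length $2h$. For each $v$, substitute $x=j_vt$. The interval $j_vI$ has length $2hj_v$ and covers at most $\lceil 2hj_v\rceil$ periods, so
\[
 \int_I(-\log|1-e(j_vt)|)_+\dd t\le\frac{\lceil2hj_v\rceil}{j_v}\,c_V.
\]
Dividing by $2h$, the average of the negative part is at most $c_V(1+1/(2hj_v))$. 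This is not of the form $Cd/j_v$: the constant term $c_V$ per factor spoils it.

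So I will instead use the exact identity $\int_\T\log|1-e(x)|\dd x=0$. The average of $\log|1-e(j_vt)|$ over $I$ equals $(1/2h)\int_{j_vI}\log|1-e(x)|\dd x/j_v$. The full periods contribute zero, and the remaining partial period contributes at least $-\int_\T(-\log|1-e|)_+=-c_V$ in the $x$-variable. The average is therefore at least $-c_V/(2hj_v)=-2\pi e c_V d/j_v$. Summing over $v$ and choosing $t\in I$ with the sum at least its average gives the bound $-C d\sum_v j_v^{-1}$.

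The main obstacle is matching the stated constant $2e$. That requires tuning $h$ and the constants: $c_V=\int_0^1(-\log|2\sin\pi x|)_+\dd x$ is small (about $0.32$), and the factor-two loss can be traded against $h$. If the exact constant proves stubborn, I would accept a constant $C$ in place of $2e$, since only $O(d\sum j_v^{-1})$ is used downstream with Lemma~\ref{lem:crossmass}.

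For $d=0$, $P$ is constant, and $\|Q\|_\infty\ge|Q(-1)|$ or, more simply, $\int\log|Q|=0$ (Jensen, $Q(0)=1$) gives $\|Q\|_\infty\ge1$. Hence the lower bound zero holds.
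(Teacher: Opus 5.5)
Your circle-averaging argument is sound, but as written it proves the lower bound with a constant slightly larger than $2e$, so it does not establish \eqref{eq:insertion} as stated. With $h=1/(4\pi ed)$ each factor costs on average at most $c_V/(2hj_v)=2\pi e\,c_V\,d/j_v$. Since $c_V=2\int_0^{1/6}(-\log(2\sin\pi x))\dd x\approx0.323$, you get $2\pi c_V\approx2.03>2$.

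``Tuning $h$'' cannot fix this by itself. From the first-order bound $ed$ of \eqref{eq:poly-estimates}, with a factor-two loss required pointwise on the whole arc, $h=1/(4\pi ed)$ is already the largest admissible half-length. Trading the factor two instead changes the $-\log2$ in the statement.

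Either of two changes repairs your route:
\begin{itemize}
\item Bernstein's inequality $\max_{|z|=1}|P'(z)|\le d\|P\|_\infty$ permits $h=1/(4\pi d)$ and yields the constant $2\pi c_V\approx2.03<2e$.
\item Alternatively, average $\log|P(e(t))|\ge\log\|P\|_\infty+\log(1-2\pi ed|t-t_0|)$ over the arc together with the cycle factors. The mean $P$-loss stays below $\log2$ as long as $2\pi edh\le0.86$, which gives a constant of roughly $1.18e$.
\end{itemize}
You are right that only $O(d\sum_vj_v^{-1})$ enters the Markov step in the completion of Theorem~\ref{thm:main}.

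In the case $d=0$, the bound $\|Q\|_\infty\ge|Q(-1)|$ is useless, because $Q(-1)=0$ as soon as some $j_v$ is even. The mean-zero argument you give next is the one that works.

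The paper avoids averaging entirely. At a maximizer $z_*$ it moves radially inward to $\rho z_*$ with $\rho=1-(2ed)^{-1}$. The disk version of the derivative bound gives $|P(\rho z_*)|\ge\|P\|_\infty/2$. Every factor satisfies $|1-(\rho z_*)^{j_v}|\ge1-\rho^{j_v}\ge1-e^{-j_v/(2ed)}$ simultaneously, and $-\log(1-e^{-x})\le1/x$ turns this into exactly $2ed/j_v$. The maximum modulus principle then transfers the value at $\rho z_*$ to the circle, and $Q(0)=1$ handles constant $P$.

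The radial shift gives a deterministic lower bound for all factors at once, free of logarithmic singularities, hence the clean constant. Your approach stays on the circle and uses only $\int_\T\log|1-e(x)|\dd x=0$ plus a pigeonhole choice of $t$, much like the paper's low-cycle step with $c_V$. The price is a constant tied to $c_V$ and the arc length.
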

\begin{proof}
The upper bound is pointwise. At a maximizer $z_*$ of $P$ on the unit
circle take $\rho=1-(2ed)^{-1}$. Equation~\eqref{eq:poly-estimates}
gives $|P(\rho z_*)|\ge\|P\|_\infty/2$. Also
\[
 |Q(\rho z_*)|\ge\prod_v(1-\rho^{j_v}),\qquad
 -\log(1-e^{-x})\le(e^x-1)^{-1}\le x^{-1}\quad(x>0).
\]
Since $\rho^j\le e^{-j/(2ed)}$, the claimed lower bound follows from
the maximum modulus principle for $PQ$. When $P$ is constant use
$Q(0)=1$.
\end{proof}

\begin{lemma}[Conditional cross-mass bound]\label{lem:crossmass}
For $k/L$ in the fixed compact interval and $b\le n/4$,
\begin{equation}\label{eq:crossmass}
 \E_{n,k}\left[
 \left(\sum_{j\le b}jC_j\right)
 \left(\sum_{l>b}\frac{C_l}{l}\right)\right]\le C
\end{equation}
for all sufficiently large $n$, uniformly in $b,k$.
\end{lemma}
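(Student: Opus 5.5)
The plan is to reduce \eqref{eq:crossmass} to an explicit double sum of coefficient ratios, using the factorial-moment identity for the conditional law \eqref{eq:conditional-law}, and then to bound those ratios with the marker asymptotic \eqref{eq:akn} applied at a shifted size.

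\emph{Exact identity.} Since every $j\le b$ differs from every $l>b$, removing one $j$-cycle and one $l$-cycle from the weight $\prod_i j_i^{-c_i}/c_i!$ in \eqref{eq:conditional-law} gives
\[
 \E_{n,k}[C_jC_l]=\frac1{jl}\,\frac{a_{n-j-l,k-2}}{a_{n,k}},
\]
with $a_{N,k'}=0$ for impossible indices. The weights $j$ and $1/l$ then combine with $1/(jl)$, so the left side of \eqref{eq:crossmass} equals exactly
\[
 \sum_{j\le b}\ \sum_{b<l\le n-j}\frac1{l^2}\,\frac{a_{n-j-l,k-2}}{a_{n,k}} .
\]

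\emph{Bounding the ratios.} Write $N=n-j-l$. I split according to the size of $N$.

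First suppose $N\ge\sqrt n$. Then $\log N/L\in[1/2,1]$, so $(k-2)/\log N$ lies in a fixed positive compact interval. Hence \eqref{eq:akn} applies at size $N$ with the same uniform constant. Combined with \eqref{eq:akn} at size $n$ and $\Gamma$ bounded away from $0$ and $\infty$ on the relevant compact set, it gives
\[
 \frac{a_{N,k-2}}{a_{n,k}}\le C\,\frac nN\,\frac{k^2}{L^2}\Bigl(\frac{\log N}{L}\Bigr)^{k-2}\le C\,\frac nN .
\]
(One should check that \eqref{eq:akn} is stated uniformly in the size as well as in $k/\log n$; its proof via $h_N(u)=N^{u-1}\{1/\Gamma(u)+O(N^{-1})\}$ is uniform in $N$.)

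Next suppose $N<\sqrt n$. Here I use the crude Cauchy bound $a_{N,k-2}\le x^{-(k-2)}h_N(x)\le Cx^{-(k-2)}N^{x-1}$ with $x=k/L$. Comparing with $a_{n,k}\ge c\,n^{x-1}x^{-k}k^{-1/2}$, which follows from \eqref{eq:akn} and Stirling, the ratio is at most $C\sqrt L\,(N/n)^{x-1}$. This is at most $C\sqrt L\,(n/N)\,n^{-\kappa_-/2}$, and the factor $n^{-\kappa_-/2}$ absorbs the $\sqrt L$. So in every case $a_{N,k-2}/a_{n,k}\le C\,n/N$, and moreover $\le C\,(n/N)\,n^{-c}$ when $N<\sqrt n$.

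\emph{Summation.} I split the $l$-sum at $l=n/2$.
\begin{itemize}
\item If $l\le n/2$, then $N\ge n-b-n/2\ge n/4$, so each ratio is $O(1)$. The $l$-sum is then $O(\sum_{l>b}l^{-2})=O(1/b)$, and summing over $j\le b$ gives $O(1)$.
\item If $l>n/2$, then $l^{-2}\le4/n^2$. For fixed $j$ the map $l\mapsto N$ is injective, so the $l$-sum is at most
\[
 \frac{4}{n^2}\sum_{1\le N\le n}C\,\frac nN \le \frac{C\log n}{n} .
\]
The lower tail $N<\sqrt n$ does even better because of its extra factor $n^{-c}$. The same bound covers $N=0$, since $a_{0,k-2}=0$ for $k\ge3$. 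Summing over $j\le b$ gives $O(b\log n/n)$.
\end{itemize}

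\emph{The main obstacle.} The delicate point is the second regime when $b$ is as large as $n/4$, which the statement allows. The bound $O(b\log n/n)$ then becomes $O(\log n)$, which is not $O(1)$. I would first try to recover the missing logarithm from the factor $(\log N/L)^{k-2}\approx(N/n)^{\kappa}$ that was discarded in the first regime. Keeping it turns the sum into
\[
 \frac{C}{n}\sum_{N\le n}\Bigl(\frac nN\Bigr)^{1-\kappa'},\qquad 0<\kappa'\approx\kappa ,
\]
which is $O(1)$ uniformly on the compact $\kappa$ interval. This yields the uniform constant in \eqref{eq:crossmass} for every $b\le n/4$.

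The difficulty in carrying this out is making $(\log N/L)^{k}\le C(N/n)^{\kappa-o(1)}$ rigorous across the whole range $\sqrt n\le N\le n/4$ without losing a factor $\sqrt L$. For that I would use the expansion $\log(\log N/L)=\log(1-\log(n/N)/L)\le-\log(n/N)/L$ rather than Stirling.
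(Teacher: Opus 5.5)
Your proposal is correct. It coincides with the paper for the exact identity and for the range $l\le n/2$, but handles $l>n/2$ by a genuinely different route.

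\textbf{The paper's route for $l>n/2$.} The paper never estimates $a_{N,k-2}$ at an individual small size. It uses $l^{-2}\le4/n^2$ and then, for each $j$, sums the coefficients over all sizes at once with the exact identity \eqref{eq:sum-a}, $\sum_{r=0}^n a_{r,k-2}=na_{n,k-1}+a_{n,k-2}$, which comes from $h_n(u+1)=(n+u)h_n(u)/u$. After that, \eqref{eq:akn} is needed only at size $n$, to see that $a_{n,k-1}/a_{n,k}$ and $a_{n,k-2}/a_{n,k}$ are bounded. The whole regime is then $O(b/n)$ in one line, with no split at $\sqrt n$ and no Cauchy bound.

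\textbf{Your route.} You replace the identity by pointwise bounds. For $N\ge\sqrt n$ you apply \eqref{eq:akn} at size $N$, which is legitimate because $(k-2)/\log N$ stays in a fixed compact interval. For $N<\sqrt n$ you use a saddle bound whose $\sqrt L$ loss is absorbed by $n^{-\kappa_-/2}$.

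\textbf{The step you flag as the obstacle is immediate.} Since \eqref{eq:akn} carries the factorials exactly ($k!/(k-2)!=k(k-1)$), no Stirling loss appears when $N\ge\sqrt n$. The inequality $\log(1-y)\le -y$ then gives $(\log N/L)^{k-2}\le (N/n)^{\kappa'}$ with constant one, where $\kappa'=(k-2)/L\ge\kappa_-/2$. For fixed $j$ the sum becomes at most $\frac{C}{n}\cdot\frac1n\sum_{N\le n}(N/n)^{\kappa'-1}\le\frac{C}{n}(1+1/\kappa')$. Summing over $j\le b$ gives $O(b/n)$. In this regime actually $N<n/2$ rather than $N\le n/4$, but the inequality holds for every $N\le n$, so nothing changes.

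\textbf{What each approach buys.} Your argument is longer and needs \eqref{eq:akn} uniformly in the size variable. In return it exposes the mechanism: $a_{N,k-2}/a_{n,k}$ behaves like $(N/n)^{\kappa-1}$, so the regime $l>n/2$ is a Riemann sum for $\int_0^1t^{\kappa-1}\dd t$. The paper's cumulative identity is exactly the summed form of that power law.

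Incidentally, your unrefined bound $O(1+b\log n/n)$ already suffices for the only use of the lemma, in the proof of Theorem~\ref{thm:main}, where $b=\lfloor n/L^4\rfloor$. The refinement is needed only for the stated uniformity over all $b\le n/4$.
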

\begin{proof}
Deleting one $j$-cycle and one $l$-cycle in \eqref{eq:conditional-law},
where $j\ne l$, gives the exact identity
\[
 \E_{n,k}(C_jC_l)=\frac1{jl}\frac{a_{n-j-l,k-2}}{a_{n,k}}.
\]
Thus the expectation in \eqref{eq:crossmass} equals
\begin{equation}\label{eq:cross-exact}
 \sum_{\substack{j\le b<l\\j+l\le n}}
 \frac1{l^2}\frac{a_{n-j-l,k-2}}{a_{n,k}}.
\end{equation}
If $l\le n/2$, then $n-j-l\ge n/4$. Formula~\eqref{eq:akn}, applied
at sizes in $[n/4,n]$, bounds the coefficient ratio by a constant
uniformly: $\log(n-j-l)=L+O(1)$ and
$k(k-1)/L^2$ is bounded. This part of the sum is at most
$Cb\sum_{l>b}l^{-2}\le C$.

For $l>n/2$, use $l^{-2}\le4/n^2$ and the exact cumulative identity
\begin{equation}\label{eq:sum-a}
 \sum_{r=0}^n a_{r,k-2}
 =[u^{k-2}]h_n(u+1)
 =n a_{n,k-1}+a_{n,k-2}.
\end{equation}
It follows from $h_n(u+1)=(n+u)h_n(u)/u$.
The ratios of the two coefficients on the right to $a_{n,k}$ are
bounded by \eqref{eq:akn}. Hence this part of \eqref{eq:cross-exact}
is at most $Cb/n$. This proves the assertion. The relevant $k$ exceed
two for large $n$, so all displayed coefficients are well defined with
the conventions in \eqref{eq:basic-coeff}.
\end{proof}

\begin{proof}[Completion of Theorem~\ref{thm:main}]
Under $\Pp_{n,k}$, \eqref{eq:short-H} holds with probability tending to
one, uniformly, and $L_*\le C\ell$. For the upper bound multiply back
the $L_*$ long factors, each of supremum at most two. Since
$L-a_*=4\ell+o(1)$, this changes the comparison with
$(L+\lambda k)/s$ by at most a fixed multiple of $\ell$.

For the lower bound apply Lemma~\ref{lem:insertion} with
$d=\sum_{j\le b}jC_j$. By Lemma~\ref{lem:crossmass} and Markov's
inequality, $2ed\sum_{l>b}C_l/l\le\ell$ with probability tending to
one uniformly. Thus reinsertion decreases the maximum by at most
$\ell+\log2$ on that event. The difference between the two centers is
\[
 \frac{L+\lambda k}{s}-\frac{a_*+\lambda(k-L_*)}{s}
   =\frac{L-a_*+\lambda L_*}{s}=O(\ell).
\]
Combining with \eqref{eq:short-H} proves the lower half of
\eqref{eq:main}, and hence the theorem. No lower-bound estimate for a
Poisson spectral maximum was used in this completion.

\paragraph{A fixed localization constant before the error tolerance.}
Here is an explicit final choice to keep track of the
quantifiers. Let $C_{\rm sh}$ be the fixed constant in
the short-field estimate, and let $C_{\rm long}$ be
fixed so that $L_*\le C_{\rm long}\ell$ with uniformly
high probability. Write
$\lambda_{\max}=\max_{\kappa}\lambda(s_\kappa)$.
The identity $\lambda(0)=\lambda'(0)=0$ and
$\lambda''>0$ gives $\lambda_{\max}>0$.
For all sufficiently large $n$,
$0\le L-a_*\le5\ell$, so on this count event the
difference of centers is nonnegative and at most
$C_{\rm ctr}\ell$, where
\[
 C_{\rm ctr}=\frac{5+\lambda_{\max}C_{\rm long}}{s_{\min}},
 \qquad
 C=C_{\rm sh}+C_{\rm ctr}
           +C_{\rm long}\log2+1+\log2.
\]
Increase the threshold for $n$ so that $\ell\ge1$.
On the insertion event
$2ed\sum_{l>b}C_l/l\le\ell$, the lower loss is
at most $(1+\log2)\ell$; the upper loss is at most
$C_{\rm long}\ell\log2$. Both sides of the desired
inequality then follow with the displayed $C$.
If $d=0$, the constant-polynomial version of insertion
gives an even smaller lower loss.

The complement of this intersection has probability
at most the sum of the short-field failure, the
long-count failure, and
\[
 \Pp_{n,k}\!\left(2ed\sum_{l>b}C_l/l>\ell\right)
 \le\frac{2e}{\ell}
       \E_{n,k}\!\left[d\sum_{l>b}C_l/l\right]
 \le C_{\rm ins}/\ell.
\]
Each tends uniformly to zero. In particular, for any
$\epsilon>0$ one subsequently chooses a threshold
making these three terms at most $\epsilon/3$ each.
The constants defining $C$ have already been fixed.
The integer window in \eqref{eq:main} is nonempty
for large $n$ because its length tends to infinity,
and all its integers lie between $1$ and $n$.
For each such integer $k$, a permutation with one
cycle of length $n-k+1$ and $k-1$ fixed points shows
that $a_{n,k}>0$. Thus the conditional laws and the
supremum in the theorem have their literal meaning.

\end{proof}

\section{Consequences and the limits of the conclusion}\label{sec:consequences}
For fixed $\theta>0$ let $K_n$ have its ordinary Ewens law. Its exact
probability generating function is $h_n(\theta u)/h_n(\theta)$.
The gamma-ratio estimate gives
\[
 \frac{K_n-\theta\log n}{\sqrt{\theta\log n}}\Longrightarrow G,
 \qquad G\sim\mathcal N(0,1),
\]
and $K_n/\log n\to\theta$ in probability. For example the characteristic
function follows by putting $u=e^{it/\sqrt{\theta\log n}}$ in that
ratio; the exponent tends to $-t^2/2$, and the gamma prefactor tends to
one. Differentiation of \eqref{eq:critical} gives
\begin{equation}\label{eq:v-derivatives}
 v'(\kappa)=\frac{\lambda(s_\kappa)}{s_\kappa},\qquad
 v''(\kappa)=-\frac1{\kappa^3s_\kappa^3\lambda''(s_\kappa)}.
\end{equation}
Uniform exact-cycle localization and a Taylor expansion therefore imply
\begin{corollary}[Random centering in the ordinary Ewens law]
\label{cor:ewens}
For every fixed $\theta>0$, with $a_\theta=\lambda(s_\theta)/s_\theta$,
\[
 M_n=v(\theta)\log n+a_\theta(K_n-\theta\log n)+\Op(\log\log n).
\]
In fact a fixed-constant $\log\log n$ bound holds with probability tending
to one. Jointly,
\[
 \left(\frac{K_n-\theta\log n}{\sqrt{\theta\log n}},
       \frac{M_n-v(\theta)\log n}{a_\theta\sqrt{\theta\log n}}\right)
 \Longrightarrow(G,G).
\]
\end{corollary}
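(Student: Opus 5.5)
The plan is to derive the corollary from Theorem~\ref{thm:main} through the mixture identity \eqref{eq:mixture}, and then add a deterministic Taylor expansion of $v$ at $\theta$.

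Fix $\theta>0$ and take the compact interval $[\kappa_-,\kappa_+]=[\theta/2,2\theta]$. Let $C$ be the constant that Theorem~\ref{thm:main} gives for this interval. Consider the event
\[
 A_n=\{|M_n-m_{n,K_n}|>C\log\log n\}.
\]
By \eqref{eq:mixture},
\[
 \Pp_{\theta,n}(A_n)\le\Pp_{\theta,n}\bigl(K_n/L\notin[\kappa_-,\kappa_+]\bigr)
 +\sup_{\kappa_-L\le k\le\kappa_+L}\Pp_{n,k}\bigl(|M_n-m_{n,k}|>C\ell\bigr).
\]
The first term tends to zero because $K_n/\log n\to\theta$ in probability, which follows from the central limit theorem recorded just before the corollary. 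The second term tends to zero by \eqref{eq:main}. The supremum over $k$ is exactly what makes this step legitimate, since the weights $\Pp_{\theta,n}(K_n=k)$ are summed against a uniform bound. So with probability tending to one,
\[
 |M_n-v(K_n/L)L|\le C\ell.
\]

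Next expand $v$ around $\theta$. By \eqref{eq:v-derivatives}, $v$ is $C^2$ on $[\kappa_-,\kappa_+]$. This uses that $s_\kappa$ is smooth in $\kappa$ by the implicit function theorem applied to \eqref{eq:critical}, whose $s$-derivative $\kappa s\lambda''(s)$ is positive. It also uses that $|v''|$ is bounded there, because $s_\kappa$ and $\lambda''(s_\kappa)$ stay in compact subsets of $(0,\infty)$. Since $v'(\theta)=a_\theta$,
\[
 \bigl|v(K_n/L)L-v(\theta)L-a_\theta(K_n-\theta L)\bigr|
 \le \tfrac12\sup|v''|\,\frac{(K_n-\theta L)^2}{L}.
\]
The right side is $\Op(1)$ by the central limit theorem for $K_n$. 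This already gives the $\Op(\log\log n)$ statement. To get a fixed-constant bound with probability tending to one, note that $(K_n-\theta L)^2/L\le\ell$ with probability tending to one, because the normalized count is tight and $\ell\to\infty$. Replacing $C$ by $C+\tfrac12\sup|v''|$ then proves \eqref{eq:random-center}.

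For the joint limit, set $G_n=(K_n-\theta L)/\sqrt{\theta L}$ and write
\[
 \frac{M_n-v(\theta)L}{a_\theta\sqrt{\theta L}}=G_n+\frac{\Op(\ell)}{a_\theta\sqrt{\theta L}}.
\]
Here $a_\theta=\lambda(s_\theta)/s_\theta>0$, since $\lambda>0$ on $(0,\infty)$ by $\lambda(0)=\lambda'(0)=0$ and $\lambda''>0$. Because $\ell/\sqrt L\to0$, the second coordinate differs from the first by $o_{\Pp}(1)$. Slutsky's lemma, applied to the vector $(G_n,G_n)+(0,o_{\Pp}(1))$ together with $G_n\Rightarrow G$, then gives convergence to $(G,G)$.

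The only step that needs care is the first one: passing from the conditional statement to the Ewens law with a random $k$. The uniformity in Theorem~\ref{thm:main} over all integers in the window, together with concentration of $K_n/L$, handles it. No other obstacle is expected.
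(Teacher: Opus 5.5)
Your proposal is correct and follows essentially the same route as the paper. Both average the uniform exact-cycle bound of Theorem~\ref{thm:main} over $K_n$ via \eqref{eq:mixture} on a compact neighbourhood of $\theta$, and both Taylor-expand $Lv(K_n/L)$ using the bounded $v''$ from \eqref{eq:v-derivatives}, with a remainder that is $\Op(1)$ and hence below $\ell$ with probability tending to one. The joint limit $(G,G)$ then follows from the count CLT and the vanishing difference $\ell/\sqrt L\to0$.
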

\begin{proof}
Use Theorem~\ref{thm:main} on a compact interval containing $\theta$ in
its interior. The quadratic Taylor remainder in
$L v(K_n/L)$ is $O_{\Pp}(1)$, since
$K_n-\theta L=O_{\Pp}(\sqrt L)$. This is smaller than $\ell\to\infty$;
the stated fixed-constant bound and the joint limit follow.

\paragraph{The Taylor remainder and the common Gaussian coordinate.}
To detail the deterministic expansion, implicit differentiation
in \eqref{eq:critical} yields
\[
 \frac{\dd s_\kappa}{\dd\kappa}
       =-\frac1{\kappa^2s_\kappa\lambda''(s_\kappa)}.
\]
Substitution into $v(\kappa)=\kappa\lambda'(s_\kappa)$
gives the two derivatives in \eqref{eq:v-derivatives}.
In particular $v''$ is bounded on a sufficiently small
fixed compact neighborhood of $\theta$. Since
$\lambda(s)>0$ for $s>0$, the denominator
$a_\theta\sqrt{\theta L}$ in the corollary is positive.
For $k/L$ in that neighborhood Taylor's formula gives
the deterministic bound
\[
 \left|L v(k/L)-L v(\theta)
              -a_\theta(k-\theta L)\right|
       \le C_\theta\frac{(k-\theta L)^2}{L}.
\]
The standardized cycle counts are tight by their CLT,
so the right side, at $k=K_n$, is $O_{\Pp}(1)$.
The probability of leaving the neighborhood tends
to zero by the count law of large numbers.

Theorem~\ref{thm:main}, averaged over $K_n$ inside
this neighborhood, gives a fixed $C_0$ for which
$\Pp(|M_n-Lv(K_n/L)|>C_0\ell)\to0$.
For any family $R_n=O_{\Pp}(1)$ and any deterministic
$\ell\to\infty$, $\Pp(|R_n|>\ell)\to0$: given an
error tolerance, first choose a uniform tightness
bound and then let $\ell$ exceed it. Applying this
observation to the Taylor remainder proves the
corollary's fixed-constant assertion with $C_0+1$.

Set $Y_n=(K_n-\theta L)/\sqrt{\theta L}$ and let
$V_n$ denote the second coordinate in the corollary.
The bound just proved implies
$V_n-Y_n\to0$ in probability, since
$\ell/\sqrt L\to0$. The continuous map
$x\mapsto(x,x)$ sends the count CLT to
$(Y_n,Y_n)\Rightarrow(G,G)$; the vanishing difference
$(0,V_n-Y_n)$ then gives the displayed joint weak limit.
This is a statement about joint distributions.
It does not require, or assert, convergence of second
moments or a limit for the Pearson correlation.

\end{proof}

\paragraph{The original two-matching encoding.}
If $a$ and $b$ are fixed-point-free involutions on $2n$ labels, with $a$
fixed and $b=gag^{-1}$ uniformly conjugated, the alternating component
counts $C_j$ have the Ewens$(1/2)$ law in the encoding of \cite{Lu}.
On an alternating component of size $2j$, the product $ab$ consists of
two $j$-cycles. Hence
\[
 \det(I-zP_{ab})=\prod_j(1-z^j)^{2C_j}.
\]
If the number of alternating components is fixed to $k$,
Theorem~\ref{thm:main} applies directly: the logarithmic maximum is within
$C\log\log n$ of $2m_{n,k}$, uniformly for $k/\log n$ in a positive
compact interval. This is an exact application of the encoding, not an
additional extremal theorem proved by a different approximation.

\paragraph{Counting and identifying the actual matching model.}
For completeness, the encoding can be described directly.
Relabel the fixed matching $a$ so that its edges are
the pairs $(i,0),(i,1)$, $1\le i\le n$. For another
matching $b$, the graph with edges of both matchings
has even alternating components. A common edge
$a=b$ is a component on two vertices; it is included.
There are exactly two binary colorings of each
component in which both kinds of edges join opposite
colors. Consequently a matching with $k$ components
admits $2^k$ such colorings.

Given a coloring, denote by $\operatorname{out}(i)$
and $\operatorname{in}(i)$ the two vertices of the
$i$th fixed edge, according to their colors.
The matching $b$ sends $\operatorname{out}(i)$ to
$\operatorname{in}(\pi(i))$ for a unique permutation
$\pi$ of the $n$ fixed edges. Conversely, a choice
of the $n$ orientations and a permutation $\pi$
specifies all these $b$-edges and the coloring.
The alternating components correspond exactly to
the cycles of $\pi$, with twice their vertex counts.
The number of permutations with cycle profile $c$
is $n!\prod_j(j^{c_j}c_j!)^{-1}$. Counting the
colored matchings in these two ways therefore gives
\[
 \#\{b:\text{component profile }c\}
 =2^{\,n-k}n!\prod_j\frac{j^{-c_j}}{c_j!},
 \qquad \sum_jjc_j=n,\quad \sum_jc_j=k.
\]
Dividing these weights by their sum identifies the
component profile as Ewens$(1/2)$, and conditioning
on its component count cancels the factor $(1/2)^k$.
The normalization agrees with
$\#\{b\}=(2n-1)!!=2^n n!h_n(1/2)$.
Uniform conjugation indeed produces the uniform
matching: all fixed-point-free involutions are
conjugate, and every fiber of $g\mapsto gag^{-1}$
is a coset of the same centralizer. Relabeling back
to any fixed $a$ preserves the component profile.

The same description determines the permutation
$ab$. On the out-vertices it acts as $\pi$ and on
the in-vertices as $\pi^{-1}$. A component of size
$2j$ therefore contributes two $j$-cycles, including
two fixed points when $j=1$. A $j$-cycle contributes
$1-z^j$ to $\det(I-zP)$, so the determinant formula
above holds for the actual product permutation.
Writing $P_c(z)=\prod_j(1-z^j)^{c_j}$, one has
$\|P_c^2\|_\infty=\|P_c\|_\infty^2$ and therefore
the actual logarithmic maximum equals
$2\log\|P_c\|_\infty$. Its conditioning variable
is the number $k$ of graph components, whereas
the permutation $ab$ itself has $2k$ cycles.
Applying Theorem~\ref{thm:main} to the size-$n$
profile and multiplying its error bound by two
proves exactly the asserted center $2m_{n,k}$.

\paragraph{What is and is not being asserted.}
The proof above closes the conditional high-probability lower bound
through a coefficient comparison valid for the entire path restriction.
It also supplies the matching upper localization. It does not identify
$M_n-m_{n,K_n}$ after subtracting a particular multiple of $\log\log n$,
nor its distribution on a smaller scale, nor an extremal point process.
A previous fixed-Diophantine moment theorem and a microscopic collision
kernel are not inputs here: reorganizing the generating function into a
reservoir and a positive constrained polynomial avoids needing a uniform
extension of those theorems over all moving singularity clusters.

The proof still uses a final second-moment inequality and elementary
walk estimates. Its coefficient content is not a claim that probability
can be eliminated from a probabilistic problem. The methodological point
is that exact conditioning and all growing-dimensional path restrictions
are handled by proved coefficient and convolution estimates, rather than
being removed by an absolute-error approximation or inserted as a
hypothesis.

\section*{Tools and formal verification}
The mathematical development of this paper was carried out through
iterative work with AI assistants, which also translated the argument
into Lean~4 and ran the verification. The formalization is over the
literal finite permutation model: it covers the conditional law
\eqref{eq:conditional-law}, the characteristic polynomial as the
determinant of the actual permutation matrix, Theorem~\ref{thm:main},
Corollary~\ref{cor:ewens} and the two-matching application of
Section~\ref{sec:consequences}. It was checked with \texttt{trust=0} and
\texttt{debug.skipKernelTC=false}, contains no unproved placeholders,
and uses only the foundational axioms \texttt{propext},
\texttt{Classical.choice} and \texttt{Quot.sound}. The Lean sources, the
locked dependencies and the verification script are available at
\begin{center}
\url{https://github.com/Lzp88/conditional-spectral-extremes}.
\end{center}
Kernel checking establishes the formal statements, not their equivalence
with the prose above, and bears on neither novelty nor priority. The
author is responsible for the mathematical claims.

\end{document}